\theoremstyle{plain}
\newtheorem{theorem}{Theorem}[section]
\newtheorem{lemma}[theorem]{Lemma}
\newtheorem{proposition}[theorem]{Proposition}
\newtheorem{corollary}[theorem]{Corollary}
\newtheorem*{theorem*}{Theorem}
\newtheorem*{lemma*}{Lemma}
\newtheorem*{proposition*}{Proposition}
\newtheorem*{corollary*}{Corollary}
\newtheorem*{conjecture*}{Conjecture}
\theoremstyle{definition}
\newtheorem{definition}[theorem]{Definition}
\newtheorem{example}[theorem]{Example}
\newtheorem{remark}[theorem]{Remark}
\newtheorem*{definition*}{Definition}
\newtheorem*{example*}{Example}
\newtheorem*{remark*}{Remark}
\newtheorem*{notation*}{Notation}
\newcommand{\splink}[1]{\href{https://stacks.math.columbia.edu/tag/#1}{\textsc{#1}}}
\newcommand{\spref}[1]{Tag \href{https://stacks.math.columbia.edu/tag/#1}{\textsc{#1}}}
\newcommand{\sM}{\mathcal{M}}
\newcommand{\al}{\alpha}
\newcommand{\sF}{\mathcal{F}}
\newcommand{\eps}{\epsilon}
\newcommand{\sU}{\mathcal{U}}
\newcommand{\p}{\mathbb{P}}
\newcommand{\sP}{\mathcal{P}}
\newcommand{\subs}{\subseteq}
\newcommand{\om}{\omega}
\newcommand\univ{\ensuremath{\mathrm{univ}}}
\newcommand\universalbundle{\ensuremath{\mathcal{E}_{\univ}}}
\DeclareMathOperator\Hom{Hom}
\DeclareMathOperator\Ext{Ext}
\DeclareMathOperator\rk{rk}
\DeclareMathOperator\Spec{Spec}
\DeclareMathOperator\GL{GL}
\DeclareMathOperator\gr{gr}
\DeclareMathOperator\Sym{Sym}
\DeclareMathOperator\id{id}
\DeclareMathOperator\Aut{Aut}
\DeclareMathOperator\HH{H}
\DeclareMathOperator\hh{h}
\DeclareMathOperator\stPic{\mathcal{P}ic}
\DeclareMathOperator\tr{tr}
\DeclareMathOperator\Pic{Pic}
\DeclareMathOperator\Kzero{K_0}
\newcommand\modulistack[4][]{\mathcal{M}_{#2}^{#1}(#3,#4)}
\newcommand\modulispace[4][]{\mathrm{M}_{#2}^{#1}(#3,#4)}
\newcommand\semistable{\mathrm{ss}}
\newcommand\stable{\mathrm{s}}
\begin{document}

\title{Projectivity of the moduli space of vector bundles on a curve}
\author{Jarod Alper \and Pieter Belmans \and Daniel Bragg \and Jason Liang \and Tuomas Tajakka}

\maketitle

\begin{abstract}
  We discuss the projectivity of the moduli space of semistable vector bundles on a curve of genus $g\geq 2$. This is a classical result from the 1960s, obtained using geometric invariant theory. We outline a modern approach that combines the recent machinery of good moduli spaces with determinantal line bundle techniques. The crucial step producing an ample line bundle follows an argument by Faltings with improvements by Esteves--Popa. We hope to promote this approach as a blueprint for other projectivity arguments.
\end{abstract}

\begin{center}
  \emph{Dedicated to the memory of \\ Conjeevaram~Srirangachari~Seshadri}
\end{center}

In \cite{MR1064874}, Koll\'ar took a modern approach to the construction of the moduli space~$\overline{\mathrm{M}}_g$ of stable curves of genus~$g\geq 2$ as a projective variety, avoiding methods from geometric invariant theory. The approach can be summarized in three steps.
\begin{enumerate}
  \item Prove that the stack of stable curves is a proper Deligne--Mumford stack $\overline{\mathcal{M}}_{g}$ \cite{MR0262240}.

  \item Use the Keel--Mori theorem to show that $\overline{\mathcal{M}}_{g}$ has a coarse moduli space $\overline{\mathcal{M}}_{g} \to \overline{\mathrm{M}}_{g}$, where $\overline{\mathrm{M}}_{g}$ is a proper algebraic space \cite{MR1432041}.

  \item\label{enumerate:Mg-ample} Prove that some line bundle on $\overline{\mathcal{M}}_{g}$ descends to an \emph{ample} line bundle on $\overline{\mathrm{M}}_{g}$ \cite{MR1064874,MR1241126}.
\end{enumerate}
In \cite{maulik} the projectivity of $\overline{\mathrm{M}}_{g}$ 
was established by giving a stack-theoretic treatment of Koll\'ar's paper. 

We take a similar approach to constructing the moduli space $\modulispace[\semistable]{X}{r}{d}$ of semistable vector bundles of rank $r$ and degree $d$ on a smooth, proper curve $X$ as a projective variety. There are again three steps in the proof.
\begin{enumerate}
  \item\label{enumerate:MXrd-properties} Prove that $\modulistack[\semistable]{X}{r}{d}$ is a universally closed algebraic stack of finite type \cite{MR0364255}.

  \item\label{enumerate:MXrd-gms} Use an analogue of the Keel--Mori theorem to show that there exists a \emph{good moduli space} $\modulistack[\semistable]{X}{r}{d} \to \modulispace[\semistable]{X}{r}{d}$ \cite{1812.01128v3}, where~$\modulispace[\semistable]{X}{r}{d}$ is a proper algebraic space.

    Unlike the case of~$\overline{\mathcal{M}}_g$, the stack $\modulistack[\semistable]{X}{r}{d}$ is never Deligne--Mumford and the morphism to~$\modulispace[\semistable]{X}{r}{d}$ will identify nonisomorphic vector bundles, introducing the notion of S-equivalence.

  \item\label{enumerate:MXrd-ample} Prove that a suitable determinantal line bundle on $\modulistack[\semistable]{X}{r}{d}$ descends to an ample line bundle on $\modulispace[\semistable]{X}{r}{d}$.
\end{enumerate}

In Sections \ref{section:moduli-all}--\ref{section:good-moduli-space} we define and study the moduli stack $\modulistack[\semistable]{X}{r}{d}$ and establish steps \ref{enumerate:MXrd-properties} and \ref{enumerate:MXrd-gms}. The main points are summarized in Theorems \ref{theorem:properties} and \ref{theorem:good-moduli-space}.


In Section \ref{section:projectivity} we discuss step \ref{enumerate:MXrd-ample} and give a detailed exposition of the construction of an ample line bundle on $\modulispace[\semistable]{X}{r}{d}$ using techniques from Esteves--Popa \cite{MR2068965}. This method is more recent than the first GIT-free construction due to Faltings \cite{MR1211997}, which has received expository accounts by Seshadri (and Nori) in \cite{MR1247504} and for a special case by Hein in \cite{MR3013028}. All these constructions build upon the notion of determinantal line bundles, which we introduce in Section~\ref{section:determinantal}.

Because the existence results for good moduli spaces from \cite{1812.01128v3} are currently restricted to characteristic~0, we make that restriction as well. 

\section{The moduli stack of all vector bundles}
\label{section:moduli-all}
Let $k$ be an algebraically closed field of characteristic $0$, and let $X$ be a smooth, projective, connected curve over $k$.

We denote by $\underline{{\rm Coh}}_X$ the stack of all coherent sheaf on $X$. For integers $r>0$ and $d$, we let $\modulistack{X}{r}{d}$ denote the stack parameterizing vector bundles on $X$ of rank $r$ and degree $d$. An object of $\modulistack{X}{r}{d}$ over a scheme $S$ is a vector bundle $\mathcal{E}$ on $X \times S$ such that for every geometric point $\Spec K \to S$, the restriction $\mathcal{E}|_{X \times \Spec K}$ has rank $r$ and degree $d$.

\begin{proposition}
  \label{proposition:affine-diagonal}
  The stack $\modulistack{X}{r}{d}$ is an algebraic stack, locally of finite type over $k$ with affine diagonal.
\end{proposition}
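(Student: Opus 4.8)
The plan is to realize $\modulistack{X}{r}{d}$ as an open substack of the stack $\underline{{\rm Coh}}_X$ of coherent sheaves on $X$, and then to verify the affine diagonal by hand. I would take as the main input the standard fact that $\underline{{\rm Coh}}_X$ is an algebraic stack, locally of finite type over $k$; this is proved via Quot schemes together with Artin's criteria, and I would simply cite it. Everything else reduces to two claims: that the vector-bundle locus of fixed rank and degree is open, and that the isomorphism scheme between two families is affine over the base.

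For openness, let $\mathcal{E}$ be an $S$-point of $\underline{{\rm Coh}}_X$, that is, a coherent sheaf on $X \times S$ flat over $S$. First I would invoke the standard fact that the locus in $S$ over which the fibers $\mathcal{E}|_{X \times \{s\}}$ are locally free is open (openness of the flat, equivalently locally free, locus for a finitely presented sheaf). Since $\mathcal{E}$ is flat over $S$, its fibers have a locally constant Hilbert polynomial, so the rank and degree are locally constant; fixing them to equal $(r,d)$ therefore cuts out an open-and-closed subset of the locally free locus, which is thus open in $S$. Hence $\modulistack{X}{r}{d} \hookrightarrow \underline{{\rm Coh}}_X$ is an open immersion, and the stack inherits algebraicity and local finiteness of type from $\underline{{\rm Coh}}_X$.

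It remains to show the diagonal $\Delta \colon \modulistack{X}{r}{d} \to \modulistack{X}{r}{d} \times_k \modulistack{X}{r}{d}$ is affine, which amounts to showing that for any scheme $S$ and any two families $\mathcal{E}, \mathcal{F}$ of vector bundles on $X \times S$, the functor $\Isom_S(\mathcal{E},\mathcal{F})$ is represented by an affine $S$-scheme. The key point is that the relative $\Hom$ functor $T \mapsto \Hom_{X \times T}(\mathcal{E}_T, \mathcal{F}_T)$ is represented by an affine $S$-scheme: writing $\mathcal{H}om(\mathcal{E},\mathcal{F}) = \mathcal{E}^\vee \otimes \mathcal{F}$, which is again a vector bundle and so flat over $S$, cohomology and base change along the proper flat morphism $X \times S \to S$ produces a coherent sheaf $\mathcal{Q}$ on $S$ such that this functor is represented by the affine $S$-scheme $\Spec_S(\Sym \mathcal{Q})$. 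Granting this, I would realize $\Isom_S(\mathcal{E},\mathcal{F})$ as a closed subscheme of the affine $S$-scheme $\Hom_S(\mathcal{E},\mathcal{F}) \times_S \Hom_S(\mathcal{F},\mathcal{E})$, cut out by the conditions $\psi\phi = \id_{\mathcal{E}}$ and $\phi\psi = \id_{\mathcal{F}}$; these are closed because the composition maps are morphisms of affine, hence separated, $S$-schemes and the identity sections are closed immersions. A closed subscheme of an affine scheme is affine, so $\Isom_S(\mathcal{E},\mathcal{F})$ is affine and the diagonal is affine.

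I expect the genuinely substantial step to be the algebraicity of $\underline{{\rm Coh}}_X$, which I am importing as a black box. The affine diagonal, by contrast, is elementary once one observes the trick of presenting $\Isom$ as a closed subscheme of $\Hom_S(\mathcal{E},\mathcal{F}) \times_S \Hom_S(\mathcal{F},\mathcal{E})$, rather than as the merely quasi-affine open subscheme of invertible homomorphisms inside $\Hom_S(\mathcal{E},\mathcal{F})$; the only mild subtlety there is the affineness, and not just separatedness, of the relative $\Hom$ scheme, which rests on the cohomology and base change machinery for $X \times S \to S$.
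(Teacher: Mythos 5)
Your proposal is correct, and its skeleton---realizing $\modulistack{X}{r}{d}$ inside $\underline{{\rm Coh}}_X$ as the open locus where the fibers are locally free, then cutting out rank and degree via local constancy of the Hilbert polynomial---is exactly the paper's. Where you genuinely diverge is the affine diagonal: the paper simply cites \cite[\spref{0DLY}]{stacks-project} for the affine diagonal of $\underline{{\rm Coh}}_X$ and inherits it, since the diagonal of an open substack is a base change of the diagonal of the ambient stack; you instead prove it by hand, representing the relative $\Hom$ functor by the linear scheme $\Spec_S(\Sym \mathcal{Q})$ via cohomology and base change for the proper flat morphism $X \times S \to S$, and then exhibiting $\Isom_S(\mathcal{E},\mathcal{F})$ as the closed subscheme of $\Hom_S(\mathcal{E},\mathcal{F}) \times_S \Hom_S(\mathcal{F},\mathcal{E})$ cut out by $\psi\phi = \id_{\mathcal{E}}$ and $\phi\psi = \id_{\mathcal{F}}$. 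That closed-subscheme trick is the right one (it is essentially the argument behind the cited tag), and your route is more self-contained and makes visible exactly where properness of $X$ is used; the paper's route is shorter and quietly handles the bookkeeping your argument glosses over, namely that the EGA-style linear representability you invoke is usually stated over Noetherian bases, so arbitrary $S$ requires a standard limit argument. One further small gloss on your openness step: the ``standard fact'' you invoke really combines two statements---openness in $X \times S$ of the locally free locus (via flatness over $S$ and the fiberwise criterion) and properness of $X$ to push the closed complement down to $S$---which is precisely the two-line argument the paper writes out explicitly.
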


\begin{proof}
  The stack $\underline{{\rm Coh}}_X$ is algebraic by \cite[\spref{09DS}]{stacks-project}. Given a map $T \to \underline{{\rm Coh}}_X$ corresponding to a sheaf $\mathcal{E}$ on $T \times X$ flat over $T$, the locus $Z \subs T \times X$ where $\mathcal{E}$ is not locally free is closed, so $U = T \setminus \mathrm{pr}_1(Z) \subs T$ is open by properness, showing that the substack $\mathcal{M} \subset \underline{{\rm Coh}}_X$ parameterizing vector bundles on $X$ is an open substack. The Hilbert polynomial~$\mathrm{P}_E$ of a coherent sheaf $E$ on $X$ is determined by its rank and degree, and by the Riemann--Roch theorem it is given by
  \begin{equation}
    \mathrm{P}_E(n) = \rk(E) n + \deg(E) + \rk(E) (1 - g). 
  \end{equation}
  It is locally constant in flat families, so by the discussion in \cite[\spref{0DNF}]{stacks-project} there is an open and closed substack $\modulistack{X}{r}{d} \subset \mathcal{M}$ parameterizing vector bundles of rank $r$ and degree $d$.

  Since $\underline{{\rm Coh}}_X$ is locally of finite type over $k$ by \cite[\spref{0DLZ}]{stacks-project}, and has affine diagonal by \cite[\spref{0DLY}]{stacks-project}, so does $\modulistack{X}{r}{d}$.
\end{proof}

There is a universal vector bundle $\universalbundle$ on $X \times \modulistack{X}{r}{d}$. Specializing to the rank-1 case, we obtain the Picard stack $\stPic_X^d$ parameterizing line bundles of degree $d$ on $X$. In this case the universal bundle is called the Poincar\'e bundle and denoted $\mathcal{P}.$ The line bundle $\det(\universalbundle)$ on $X \times \modulistack{X}{r}{d}$ induces a determinant morphism
\[
  \det\colon \modulistack{X}{r}{d} \to \stPic_X^d,
\]
with the property that $(\id_X \times \det)^* \mathcal{P} \cong \det(\universalbundle)$.

Let $L$ be a line bundle on $X$ of degree $d$ corresponding to a closed point of~$\stPic_X^d$, and let $[L]\colon\Spec k\to \stPic_X^d$ be the corresponding morphism. The algebraic stack parameterizing vector bundles with fixed determinant $L$ is defined as the fiber product
\begin{equation}\label{eq:cartesian square over det morphism}
  \begin{tikzcd}
    \modulistack{X}{r}{L} \arrow[r] \arrow[d] \arrow[rd, phantom, "\square"] &  \modulistack{X}{r}{d} \arrow[d, "\det"]  \\
    \Spec k \arrow[r, "{[L]}"] & \stPic_X^d.
  \end{tikzcd}
\end{equation}
Explicitly, an object of $\modulistack{X}{r}{L}$ over a $k$-scheme $S$ is a pair $(\mathcal{E},\varphi)$, where $\mathcal{E}$ is a vector bundle on $X\times S$ with rank $r$ and degree $d$ on the fibers, and $\varphi\colon\det\mathcal{E}\xrightarrow{\sim}L|_{X\times S}$ is an isomorphism.

An alternative approach to fixing the determinant uses the residual gerbe $\mathrm{B}\mathbb{G}_{\mathrm{m}}\hookrightarrow\stPic_X^d$ instead. We compare these in Example~\ref{example:to-dm-or-not-to-dm}.


\begin{proposition}
  The stack $\modulistack{X}{r}{L}$ is an algebraic stack, locally of finite type over $k$ with affine diagonal.
\end{proposition}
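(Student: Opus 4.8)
The plan is to obtain all three assertions directly from the defining Cartesian square~\eqref{eq:cartesian square over det morphism}, transferring the corresponding properties of $\modulistack{X}{r}{d}$ (Proposition~\ref{proposition:affine-diagonal}) across the base change along $[L]$. The only external input needed is that the Picard stack $\stPic_X^d$ is itself an algebraic stack, locally of finite type over $k$, with affine diagonal. This is standard: $\stPic_X^d$ is a $\mathbb{G}_{\mathrm{m}}$-gerbe over the Picard scheme $\Pic_X^d$, and the affine diagonal reflects the fact that the $\Isom$-sheaf between two families of line bundles is a $\mathbb{G}_{\mathrm{m}}$-torsor over a closed subscheme of the base, hence affine. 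Granting this, algebraicity of $\modulistack{X}{r}{L}$ is immediate, since a $2$-fibre product of algebraic stacks is again algebraic.

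For ``locally of finite type over $k$'', I would argue by base change. The determinant morphism $\det$ is a morphism between algebraic stacks that are both locally of finite type over $k$, and is therefore itself locally of finite type. Since this property is stable under base change, the left-hand vertical arrow $\modulistack{X}{r}{L}\to\Spec k$ in~\eqref{eq:cartesian square over det morphism}, being the base change of $\det$ along $[L]$, is locally of finite type; that is, $\modulistack{X}{r}{L}$ is locally of finite type over $k$.

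The affine diagonal is the step that requires genuine care, and it is where the gerbe structure of $\stPic_X^d$ enters. One should resist the temptation to treat the top arrow $f\colon\modulistack{X}{r}{L}\to\modulistack{X}{r}{d}$ as an immersion: it is the base change of $[L]\colon\Spec k\to\stPic_X^d$, which factors through the residual gerbe $\mathrm{B}\mathbb{G}_{\mathrm{m}}\hookrightarrow\stPic_X^d$ and is thus very far from a monomorphism. Instead I would factor the absolute diagonal as
\[
  \Delta_{\modulistack{X}{r}{L}/k}\colon \modulistack{X}{r}{L} \xrightarrow{\ \Delta_f\ } \modulistack{X}{r}{L}\times_{\modulistack{X}{r}{d}}\modulistack{X}{r}{L} \longrightarrow \modulistack{X}{r}{L}\times_k \modulistack{X}{r}{L},
\]
where the second arrow is the base change of $\Delta_{\modulistack{X}{r}{d}/k}$ and is affine by Proposition~\ref{proposition:affine-diagonal}. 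It then remains to see that $\Delta_f$ is affine. As $f$ is a base change of $[L]$, the morphism $\Delta_f$ is a base change of $\Delta_{[L]}$, and here
\[
  \Spec k \times_{\stPic_X^d} \Spec k \;\cong\; \Isom(L,L) \;=\; \Aut(L) \;\cong\; \mathbb{G}_{\mathrm{m}}
\]
is affine over $k$, so that $\Delta_{[L]}$ is the identity section of $\mathbb{G}_{\mathrm{m}}$, a closed immersion. Hence $\Delta_f$ is affine and the composite $\Delta_{\modulistack{X}{r}{L}/k}$ is affine, as required. (Equivalently, the same computation shows that $[L]$, and therefore $f$, is representable and affine, which gives the conclusion at one stroke.) The only real subtlety, then, is this affine-diagonal transfer; the remaining properties are formal consequences of the Cartesian square.
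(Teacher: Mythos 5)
Your proof is correct, and for two of the three claims (algebraicity via fibre products, local finiteness via cancellation plus base change across the Cartesian square) it is the same argument as the paper's. The one genuine divergence is the affine-diagonal step, where you work along the other edge of the square: the paper observes that $\det\colon\modulistack{X}{r}{d}\to\stPic_X^d$ has affine diagonal (because source and target both have affine diagonal over $k$) and then base-changes this morphism along $[L]$, so that $\modulistack{X}{r}{L}\to\Spec k$ inherits an affine diagonal; you instead factor the absolute diagonal through $\Delta_f$, recognize $\Delta_f$ as a base change of $\Delta_{[L]}$, and make $\Delta_{[L]}$ affine by the explicit computation $\Spec k\times_{\stPic_X^d}\Spec k\cong\Aut(L)\cong\mathbb{G}_{\mathrm{m}}$, with $\Delta_{[L]}$ the unit section. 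These are mirror-image uses of the same square: the permanence lemma the paper quotes (a morphism between stacks with affine diagonal over $k$ itself has affine diagonal) is proved by precisely the factorization-plus-cancellation you carry out, only applied to $\det$ rather than to $f$. What your route buys is concreteness -- at this step you need only that $\Aut(L)=\mathbb{G}_{\mathrm{m}}$ is affine with closed unit section, rather than the affine diagonal of $\stPic_X^d$ (which, as you note, is still needed as input for algebraicity, and which the paper uses implicitly here); what the paper's route buys is brevity. Your warning not to treat $f$ as an immersion is also well taken: $f$ is a $\mathbb{G}_{\mathrm{m}}$-torsor over the closed substack $\det^{-1}(\mathrm{B}\mathbb{G}_{\mathrm{m}})$ of bundles with determinant isomorphic to $L$, hence far from a monomorphism, which is exactly why the diagonal requires an argument at all.
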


\begin{proof}
  This follows by applying \cite[\spref{04TF}]{stacks-project} for the algebraicity and \cite[\spref{06FU}]{stacks-project} for being locally of finite type. 
  To see that $\modulistack{X}{r}{L}$ has affine diagonal, we use the fact that~$\det\colon\modulistack{X}{r}{d}\to\stPic_X^d$ has affine diagonal (because both stacks do) and that having affine diagonal is stable under base change.
\end{proof}

Let $\mathscr{X}$ be an algebraic stack. Let $x_0\in\mathscr{X}(k)$ be a $k$-point. By \cite[\spref{07WZ}]{stacks-project}, the tangent space $\mathrm{T}_{\mathscr{X},x_0}$ to $\mathscr{X}$ at $x_0$ is the set of isomorphism classes of objects in the groupoid $\mathscr{X}_{x_0}(k[\varepsilon]/(\varepsilon^2))$, where~$\mathscr{X}_{x_0}$ denotes the slice category. Using deformation theory one can prove the following result about the moduli stacks.
\begin{proposition} \ \vspace{-.3cm}
  \label{proposition:smooth}
  \begin{enumerate}
    \item The algebraic stacks $\modulistack{X}{r}{d}$ and $\modulistack{X}{r}{L}$ are smooth over $k$.
    \item If $E$ is a vector bundle of rank $r$ and $\varphi\colon L \cong\det E$ is an isomorphism, then there are identifications of the Zariski tangent space
      \begin{align}
        \mathrm{T}_{\modulistack{X}{r}{d},[E]} & \cong \Ext_X^1(E,E), \\
        \mathrm{T}_{\modulistack{X}{r}{L},[(E,\varphi)]} & \cong \ker\big( \Ext_X^1(E,E) \xrightarrow{\tr} \HH^1(X, \mathcal{O}_X) \big). \label{equation:tangent-space-L-fixed}
      \end{align}
  \end{enumerate}
\end{proposition}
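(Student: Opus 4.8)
The plan is to reduce everything to the deformation theory of coherent sheaves on the fixed curve $X$, where first-order deformations are governed by $\Ext^1$ and obstructions by $\Ext^2$. Concretely, for a square-zero extension $A'\twoheadrightarrow A$ of Artinian local $k$-algebras with kernel $I$, and a vector bundle $E_A$ on $X\times\Spec A$ flat over $A$ restricting to $E$ over the closed point, the standard theory says that the obstruction to lifting $E_A$ to a flat $E_{A'}$ lies in $\Ext^2_X(E,E)\otimes_k I$, while the set of lifts, when nonempty, is a torsor under $\Ext^1_X(E,E)\otimes_k I$. Since $E$ is locally free we have $\Ext^i_X(E,E)\cong\HH^i(X,E^\vee\otimes E)$, and because $X$ is a smooth projective curve this vanishes for $i\geq 2$. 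Hence all obstructions vanish, lifts always exist, and $\modulistack{X}{r}{d}$ is formally smooth; combined with being locally of finite type (Proposition~\ref{proposition:affine-diagonal}) this gives smoothness over $k$. Specializing to $A'=k[\varepsilon]/(\varepsilon^2)\twoheadrightarrow k$ with $I\cong k$, the tangent space is by \cite[\spref{07WZ}]{stacks-project} the set of isomorphism classes of deformations of $E$ over $k[\varepsilon]$, which is a torsor under $\Ext^1_X(E,E)$; the split deformation provides a basepoint, yielding $\mathrm{T}_{\modulistack{X}{r}{d},[E]}\cong\Ext^1_X(E,E)$.

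For the fixed-determinant stack I would first show that $\det\colon\modulistack{X}{r}{d}\to\stPic_X^d$ is smooth, and then deduce smoothness of $\modulistack{X}{r}{L}$ and its tangent space from the Cartesian square \eqref{eq:cartesian square over det morphism}. The key computation is that the map induced by $\det$ on first-order deformations is the trace $\tr\colon\Ext^1_X(E,E)\to\HH^1(X,\mathcal{O}_X)=\mathrm{T}_{\stPic_X^d,[\det E]}$: writing a deformation of $E$ via Čech $1$-cocycles of transition automorphisms $\mathrm{id}+\varepsilon a_{ij}$, its determinant has transition functions $\det(\mathrm{id}+\varepsilon a_{ij})=1+\varepsilon\,\tr(a_{ij})$, so the associated deformation class of $\det\tilde E$ is $\tr$ of the original. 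In characteristic $0$ the composite $\mathcal{O}_X\xrightarrow{\cdot\,\mathrm{id}}E^\vee\otimes E\xrightarrow{\tr}\mathcal{O}_X$ is multiplication by $r$, an isomorphism, so $\tr$ is split surjective on all cohomology. Surjectivity of $\tr$ on $\HH^1$ together with the vanishing $\Ext^2_X(E,E)=0$ shows, via the torsor argument applied to the relative lifting problem for $\det$, that $\det$ is formally smooth, hence smooth; base-changing along $[L]$ makes $\modulistack{X}{r}{L}\to\Spec k$ smooth, and its tangent space is the kernel of the differential of $\det$, which gives the identification in \eqref{equation:tangent-space-L-fixed}.

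As an intrinsic cross-check of the kernel description I would compute $\modulistack{X}{r}{L}(k[\varepsilon])$ directly. An object over $(E,\varphi)$ is a pair $(\tilde E,\tilde\varphi)$, with $\tilde E$ a deformation of $E$ of class $\eta\in\Ext^1_X(E,E)$ and $\tilde\varphi$ an isomorphism $\det\tilde E\cong L\otimes_k k[\varepsilon]$ extending $\varphi$. Such a $\tilde\varphi$ exists if and only if $\det\tilde E$ is the trivial deformation of $L$, that is $\tr(\eta)=0$; and since $\tr\colon\HH^0(X,E^\vee\otimes E)\to\HH^0(X,\mathcal{O}_X)$ is surjective (because $\tr(\mathrm{id})=r\neq 0$), the remaining choices of $\tilde\varphi$ become identified under isomorphisms of pairs. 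Hence the isomorphism classes are exactly $\ker(\tr)$, matching \eqref{equation:tangent-space-L-fixed}.

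The main obstacle is not the vanishing of obstructions, which is immediate from $\dim X=1$, but the careful identification of the differential of $\det$ with the trace and the bookkeeping of infinitesimal automorphisms in the stacky fiber product. One must verify that differentiating the determinant really produces $\tr$ (the Čech computation above, or equivalently functoriality of the determinant on the cotangent complex), and that no spurious contribution from $\HH^0$ survives when passing to isomorphism classes of pairs in the fixed-determinant case. This is precisely where the characteristic-$0$ hypothesis enters, through the invertibility of $r$, both to split $\tr$ and to match up the trivializations $\tilde\varphi$.
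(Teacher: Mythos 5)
Your proof is correct and follows exactly the route the paper intends: the paper states this proposition without proof, only noting via \cite[\spref{07WZ}]{stacks-project} that tangent spaces are isomorphism classes of dual-number objects and appealing to deformation theory, which is precisely the obstruction/torsor argument (with $\Ext^2_X(E,E)=\HH^2(X,E^\vee\otimes E)=0$ on a curve) that you carry out. Your treatment of the fixed-determinant case --- identifying the differential of $\det$ with $\tr$ via the \v{C}ech computation, using the characteristic-$0$ splitting of the trace, and checking that surjectivity of $\tr$ on $\HH^0$ removes the ambiguity in the choice of $\tilde\varphi$ so that isomorphism classes of pairs are exactly $\ker(\tr)$ --- correctly supplies the details the paper omits.
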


\begin{remark}
  \label{remark:dimension}
  Let $E$ be a \textit{simple} vector bundle of rank $r$ and degree~$d$, that is, $\dim \Hom_X(E,E) = 1$. We will see in Lemma \ref{lemma:stable-endomorphisms-field} that stable bundles are simple. By the Riemann--Roch theorem, we have
  \begin{align*}
      \dim \Ext_X^1(E,E) & = \dim \Hom_X(E,E) - \chi(X, E^\vee \otimes E) \\
      & = 1 - \deg(E^ \vee \otimes E) - \rk(E^\vee \otimes E)(1 - g) \\
      & = r^2(g - 1) + 1.
  \end{align*}
  Next, note that since we are working over a field of characteristic $0$, the trace map $E^\vee \otimes E \to \mathcal{O}_X$ is split by the section $\frac{1}{d} \id_E\colon \mathcal{O}_X \to E^\vee \otimes E \cong \Hom_X(E, E)$. Thus, the surjection $\Ext_X^1(E, E) \to \HH^1(X, \mathcal{O}_X)$ in \eqref{equation:tangent-space-L-fixed} is split, and so its kernel has dimension
  \begin{align*}
    \dim \Ext_X^1(E, E) - \dim \HH^1(X, \mathcal{O}_X) &= r^2(g - 1) + 1 - g \\
    &= (r^2 - 1)(g - 1).
  \end{align*}
\end{remark}

\section{Semistability}
\label{section:semistability}


In order to obtain a more well-behaved geometric object, we will restrict ourselves to an open substack~$\modulistack[\semistable]{X}{r}{d}$ of the stack $\modulistack{X}{r}{d}$. This substack parameterizes the \emph{semistable} vector bundles. The Jordan--H\"older and Harder--Narasimhan filtrations explain how it is sufficient to consider these vector bundles in order to describe all vector bundles on~$X$:
\begin{itemize}
  \item the Harder--Narasimhan filtration allows one to uniquely break up any vector bundles into semistable constituents;
  \item the Jordan--H\"older filtration allows one to break up a semistable vector bundle into \textit{stable} bundles. 
\end{itemize}
The essential properties of semistability, which will be formalized in Theorem~\ref{theorem:properties}, are
\begin{itemize}
  \item openness: $\modulistack[\semistable]{X}{r}{d}$ is an open substack of $\modulistack{X}{r}{d}$;
  \item boundedness: $\modulistack[\semistable]{X}{r}{d}$ is a quasicompact;
  \item irreducibility of the semistable locus.
\end{itemize}
For more information, in particular on the boundedness of semistable sheaves in higher dimensions and in arbitrary characteristic, see \cite{perry}.

\subsection{Definition and basic properties}

\begin{definition}
  Given a nonzero vector bundle $E$ on $X$, we define the \emph{slope} of $E$ to be the rational number
  \[
    \mu(E)\colonequals \frac{\deg(E)}{\rk(E)}.
  \]
\end{definition}

Recall that on a smooth curve, any subsheaf of a locally free sheaf is locally free.

\begin{definition}
  \label{defn:semistability}
  A vector bundle $E$ on $X$ is
  \begin{itemize}
    \item \emph{semistable} if, for every nonzero subsheaf $E' \subset E$, we have $\mu(E') \leq \mu(E)$,
    \item \emph{stable} if it is semistable and the only subsheaf $E' \subset E$ with $\mu(E')= \mu(E)$ is $E' = E$,
    \item \emph{polystable} if it is isomorphic to a finite direct sum $\bigoplus_i E_i$, where each $E_i$ is stable with $\mu(E_i) = \mu(E)$.
  \end{itemize}
\end{definition}


Below are some basic properties of semistable vector bundles.

\begin{lemma}
  \label{lemma:semistable-tensor-line-bundle}
  For a line bundle $L$, a vector bundle $E$ is (semi)stable if and only if $E \otimes L$ is (semi)stable.
\end{lemma}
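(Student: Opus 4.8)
The plan is to exploit the fact that tensoring by a line bundle sets up a slope-preserving bijection between subsheaves of $E$ and subsheaves of $E\otimes L$, and that this bijection shifts the slope of every sheaf by the same amount $\deg(L)/\rk(L)=\deg(L)$. Concretely, I would first record the elementary numerical facts: for a line bundle $L$ and any nonzero vector bundle $F$ on $X$, we have $\rk(F\otimes L)=\rk(F)$ and $\deg(F\otimes L)=\deg(F)+\rk(F)\deg(L)$, so that
\[
  \mu(F\otimes L)=\mu(F)+\deg(L).
\]
This is a direct consequence of the multiplicativity of the determinant and the fact that $L$ has rank one.

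Next I would set up the bijection on subsheaves. Because $L$ is a line bundle, the functor $(-)\otimes L$ is an exact autoequivalence of the category of coherent sheaves on $X$ with inverse $(-)\otimes L^{\vee}$. Consequently, $E'\mapsto E'\otimes L$ is an inclusion-preserving bijection between the set of nonzero subsheaves of $E$ and the set of nonzero subsheaves of $E\otimes L$, sending $E'\subset E$ to $E'\otimes L\subset E\otimes L$, with inverse $F\mapsto F\otimes L^{\vee}$. Under this bijection $E'=E$ corresponds to the full subsheaf $E\otimes L$, so proper subsheaves correspond to proper subsheaves.

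With these two ingredients the verification is immediate. For semistability: given any nonzero subsheaf $F\subset E\otimes L$, write $F=E'\otimes L$ with $E'\subset E$; then
\[
  \mu(F)=\mu(E')+\deg(L)\leq \mu(E)+\deg(L)=\mu(E\otimes L),
\]
using $\mu(E')\leq\mu(E)$ when $E$ is semistable. Since every subsheaf of $E\otimes L$ arises this way, $E\otimes L$ is semistable; applying the same argument with $L^{\vee}$ gives the converse. For stability one argues identically, observing additionally that equality $\mu(F)=\mu(E\otimes L)$ holds if and only if $\mu(E')=\mu(E)$, which (for stable $E$) forces $E'=E$ and hence $F=E\otimes L$.

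I do not anticipate a genuine obstacle here; the statement is essentially formal once the slope shift and the subsheaf bijection are in place. The only point requiring a word of care is that the correspondence $E'\mapsto E'\otimes L$ is a bijection onto \emph{all} subsheaves of $E\otimes L$ (so that no potential destabilizing subsheaf is overlooked), and that on a smooth curve any subsheaf of a locally free sheaf is again locally free, which was noted just before Definition~\ref{defn:semistability} and guarantees that slopes are defined for every subsheaf involved.
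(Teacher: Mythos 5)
Your proposal is correct and follows essentially the same route as the paper: both rest on the slope identity $\mu(E'\otimes L)=\mu(E')+\deg(L)$, so that tensoring by $L$ shifts all slopes uniformly and (semi)stability is preserved in both directions. The only difference is presentational — you spell out the subsheaf bijection induced by the autoequivalence $(-)\otimes L$, which the paper leaves implicit in its final ``if and only if.''
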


\begin{proof}
  Note that, for any vector bundle $E$, we have
  \[
    \deg(E \otimes L) = \deg(E) + \rk(E) \cdot \deg(L).
  \]
  Therefore, by definition of the slope,
  \[
    \mu(E \otimes L) = \mu(E) + \deg(L).
  \]
  Hence, for any subsheaf $E' \subset E$, we have
  \[
    \mu(E) - \mu(E') = \mu(E \otimes L) - \mu(E' \otimes L).
  \]
  By definition, $E$ is (semi)stable if and only if for all $E' \subset E$, the above quantity is positive (resp. non-negative), which holds if and only if $E \otimes L$ is (semi)stable.
\end{proof}

\begin{lemma}
  \label{lemma:no-morphisms}
  If $E$ and $F$ are semistable vector bundles with $\mu(E) > \mu(F)$, then $\Hom_X(E, F) = 0$.
\end{lemma}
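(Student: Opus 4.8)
The plan is to argue by contradiction: suppose $f\colon E \to F$ is a nonzero morphism of semistable bundles with $\mu(E) > \mu(F)$, and derive a contradiction by examining its image. First I would set $G = \im(f) \subseteq F$, which is a nonzero subsheaf of $F$ and hence, on a smooth curve, automatically locally free. Since $G$ is a nonzero quotient of $E$ and a nonzero subsheaf of $F$, the key is to bound its slope from both sides.

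The central step is to establish the two slope inequalities $\mu(G) \geq \mu(E)$ and $\mu(G) \leq \mu(F)$. The second follows directly from semistability of $F$ applied to the nonzero subsheaf $G \subseteq F$. For the first, I would use the dual formulation of semistability in terms of quotients: writing $0 \to K \to E \to G \to 0$ with $K = \ker(f)$, semistability of $E$ (applied to the subsheaf $K$, when $K \neq 0$) combined with the additivity of degree and rank in short exact sequences, namely $\deg(E) = \deg(K) + \deg(G)$ and $\rk(E) = \rk(K) + \rk(G)$, forces $\mu(G) \geq \mu(E)$. Concretely, $\mu(K) \leq \mu(E)$ rearranges to $\mu(E) \leq \mu(G)$ via the mediant inequality for slopes; if $K = 0$ then $G \cong E$ and the inequality is an equality. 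Chaining the two inequalities gives $\mu(E) \leq \mu(G) \leq \mu(F)$, contradicting the hypothesis $\mu(E) > \mu(F)$.

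The main obstacle, though entirely routine, is the careful bookkeeping with slopes across short exact sequences, since slope is not additive (only degree and rank are). The useful lemma to isolate is that for a short exact sequence $0 \to A \to B \to C \to 0$ of vector bundles, $\mu(A) \leq \mu(B)$ if and only if $\mu(B) \leq \mu(C)$, which is precisely the mediant property: $\mu(B)$ lies between $\mu(A)$ and $\mu(C)$. This lets the semistability hypothesis on $E$ (a statement about subsheaves) be converted into the needed statement about the quotient $G$. I would also need to handle the degenerate cases where $K = 0$ or $G = F$ separately, but these only make the inequalities into equalities and do not affect the contradiction. No deformation theory or cohomological input is required; the argument is purely a slope computation, so I expect the write-up to be short.
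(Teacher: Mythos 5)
Your proposal is correct and follows essentially the same route as the paper: both argue by contradiction, factor the nonzero map through its image, use semistability of $E$ on the kernel to force $\mu(\im f) \geq \mu(E)$, and then contradict semistability of $F$ via the subsheaf $\im f \subseteq F$. The only difference is presentational — you package the slope comparison as a mediant inequality across the short exact sequence, while the paper writes out the same rank--degree computation explicitly.
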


\begin{proof}
  Suppose that on the contrary $E \to F$ is a nonzero map, and let $K$ and $I$ be its kernel and image respectively. Since $E$ is semistable, $\mu(K) \leq \mu(E)$. Hence,
  \begin{equation*}
    \begin{aligned}
      \mu(I) \rk(I)
      &= \mu(E) \rk(E) - \mu(K)\rk(K) \\
      &\geq \mu(E)\rk(E) - \mu(E) \rk(K) \\
      &= \mu(E) \rk(I)
    \end{aligned}
  \end{equation*}
  so we see that $\mu(I) \geq \mu(E) > \mu(F)$. But now $I$ cannot be a subsheaf of $F$ because $F$ is semistable.
\end{proof}
The following is proved similarly.
\begin{lemma}
  \label{lemma:stable-endomorphisms-field}
  Any nonzero endomorphism of a stable bundle~$E$ is invertible. In particular, as~$k$ is algebraically closed we have~$\operatorname{End}_X(E)\cong k$.
\end{lemma}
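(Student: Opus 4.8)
The plan is to imitate the proof of Lemma~\ref{lemma:no-morphisms} (which is exactly what the phrase ``proved similarly'' signals), applying the slope inequalities to the kernel and image of an endomorphism, but now extracting from \emph{stability} the strict inequality needed to rule out a kernel. Let $\varphi\colon E\to E$ be a nonzero endomorphism, and set $K=\ker\varphi$ and $I=\im\varphi$, so that $0\to K\to E\to I\to 0$ is exact. Additivity of rank and degree in this sequence gives the identity $\mu(I)\rk(I)=\mu(E)\rk(E)-\mu(K)\rk(K)$, which will drive the argument.

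First I would prove that $\varphi$ is injective. Suppose $K\neq 0$. Since $\varphi\neq 0$ the image is nonzero, so $K\subsetneq E$ is a proper nonzero subsheaf, and stability of $E$ yields the \emph{strict} inequality $\mu(K)<\mu(E)$. Substituting into the identity above, exactly as in Lemma~\ref{lemma:no-morphisms}, gives $\mu(I)\rk(I)>\mu(E)\rk(I)$, hence $\mu(I)>\mu(E)$; but $I$ is a subsheaf of the target $E$, contradicting semistability. Therefore $K=0$, and $\varphi\colon E\xrightarrow{\sim}I$ is an isomorphism onto its image, so in particular $\mu(I)=\mu(E)$. Now $I\subset E$ is a subsheaf of slope equal to $\mu(E)$, so stability forces $I=E$. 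Thus $\varphi$ is surjective as well, and an injective surjective morphism of coherent sheaves is an isomorphism; this establishes that every nonzero endomorphism is invertible.

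For the final statement I would argue that $\operatorname{End}_X(E)$ is a finite-dimensional $k$-algebra in which every nonzero element is invertible, i.e.\ a finite-dimensional division algebra over $k$. Given any $\varphi\in\operatorname{End}_X(E)$, the powers $1,\varphi,\varphi^2,\dots$ are $k$-linearly dependent, so $\varphi$ is a root of a nonzero polynomial, which over the algebraically closed field $k$ factors to give $\prod_i(\varphi-\lambda_i\,\id)=0$ with $\lambda_i\in k$. Since a division algebra has no zero divisors, one factor must vanish, so $\varphi=\lambda_i\,\id$, and hence $\operatorname{End}_X(E)=k\cdot\id\cong k$.

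I do not expect a serious obstacle here; the one point that genuinely requires \emph{stability} rather than mere semistability — and so the place to be careful — is the use of the strict inequality $\mu(K)<\mu(E)$ to eliminate a nontrivial kernel, together with the equality case $\mu(I)=\mu(E)$ forcing $I=E$. Both invocations use that $E$ is the target (so that $K$ and $I$ are honest subsheaves of $E$), which is what makes the endomorphism case stronger than the conclusion of Lemma~\ref{lemma:no-morphisms}.
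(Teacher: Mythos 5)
Your proof is correct and is precisely the argument the paper intends: the paper gives no separate proof, saying only that the lemma ``is proved similarly'' to Lemma~\ref{lemma:no-morphisms}, and your kernel/image slope computation --- with stability supplying the strict inequality $\mu(K)<\mu(E)$ to kill the kernel and the equality case $\mu(I)=\mu(E)$ forcing $I=E$ --- is exactly that adaptation. The concluding minimal-polynomial argument identifying the finite-dimensional division algebra $\operatorname{End}_X(E)$ with $k$ is the standard one and is fine.
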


We will use the following lemma in the proof of Proposition \ref{proposition:boundedness-of-semistability} to establish boundedness.
\begin{lemma}
  \label{lemma:globally-generated}
  If $E$ is a semistable vector bundle on $X$ and $\mu(E) > 2g - 1$, then $E$ is globally generated and $\HH^1(X, E) = 0$.
\end{lemma}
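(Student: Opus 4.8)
The plan is to deduce both assertions from a single cohomology vanishing statement, powered by Serre duality together with Lemma~\ref{lemma:no-morphisms}. Concretely, I would first isolate the following observation: if $F$ is a semistable vector bundle with $\mu(F) > 2g-2$, then $\HH^1(X,F) = 0$. Indeed, by Serre duality $\HH^1(X,F) \cong \Hom_X(F, \omega_X)^\vee$, where $\omega_X$ is the canonical bundle, a line bundle of degree $2g-2$ and hence semistable of slope $2g-2$. Since $\mu(F) > 2g-2 = \mu(\omega_X)$ and both $F$ and $\omega_X$ are semistable, Lemma~\ref{lemma:no-morphisms} forces $\Hom_X(F,\omega_X) = 0$, giving the vanishing. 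Applying this with $F = E$, whose slope satisfies $\mu(E) > 2g-1 > 2g-2$, immediately yields $\HH^1(X,E) = 0$.

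For global generation I would use the pointwise criterion: a vector bundle on a curve is globally generated if and only if, for every closed point $x \in X$, the evaluation map $\HH^0(X,E) \to E|_x$ onto the fiber is surjective. To check this, I would twist the structure sequence of $x$ by $E$ to obtain the short exact sequence $0 \to E(-x) \to E \to E|_x \to 0$, where $E(-x) = E \otimes \mathcal{O}_X(-x)$. The associated long exact sequence in cohomology shows that the evaluation map $\HH^0(X,E) \to \HH^0(X,E|_x) = E|_x$ is surjective as soon as $\HH^1(X, E(-x)) = 0$.

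Finally I would verify this last vanishing by feeding $E(-x)$ into the observation of the first paragraph. By Lemma~\ref{lemma:semistable-tensor-line-bundle}, tensoring by the degree~$-1$ line bundle $\mathcal{O}_X(-x)$ preserves semistability, so $E(-x)$ is semistable with $\mu(E(-x)) = \mu(E) - 1 > (2g-1) - 1 = 2g-2$; hence $\HH^1(X,E(-x)) = 0$ and $E$ is globally generated. The only genuinely delicate point is bookkeeping the slope thresholds: the vanishing engine needs strict inequality against $\mu(\omega_X) = 2g-2$, and the hypothesis $\mu(E) > 2g-1$ is exactly what is required so that the twist-down $E(-x)$ still clears this threshold. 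No single step is a serious obstacle; the content lies entirely in correctly combining Serre duality with the slope inequality of Lemma~\ref{lemma:no-morphisms}.
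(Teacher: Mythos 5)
Your proof is correct and follows essentially the same route as the paper: Serre duality plus Lemma~\ref{lemma:no-morphisms} against $\omega_X$, applied to the twist $E \otimes \mathcal{O}_X(-x)$ via the structure sequence of a point. The only cosmetic difference is that the paper extracts both conclusions from the single vanishing $\HH^1(X, E\otimes\mathcal{O}_X(-x)) = 0$ using the long exact sequence, whereas you invoke the vanishing engine twice (once for $E$, once for $E(-x)$); the ingredients are identical.
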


\begin{proof}
  Let $x \in X(k)$ be a closed point. Let~$\mathcal{O}_x$ denote the skyscraper sheaf at~$x$. By taking the sheaf cohomology of the short exact sequence
  \[ 0 \to E \otimes \mathcal{O}_X(-x) \to E \to \mathcal{O}_x^{\oplus \rk(E)} \to 0, \]
  we see that it suffices to show that $\HH^1(X, E\otimes\mathcal{O}_X(-x)) = 0$. By Serre duality, this is equivalent to $\Hom_X(E\otimes\mathcal{O}_X(-x), \omega_X) = 0$. Now, $\mu(E\otimes\mathcal{O}_X(-x)) = \mu(E) - 1 > 2g - 2 = \mu(\omega_X)$ so the result follows by Lemma \ref{lemma:no-morphisms} since $\omega_X$ is semistable as a line bundle.
\end{proof}

\subsection{Jordan--H\"older and Harder--Narasimhan filtrations}
\label{subsection:filtrations}
Every semistable vector bundle $E$ on $X$ admits a \emph{Jordan--H\"older filtration}
\[
  0 = E_0 \subset E_1 \subset E_2 \subset \cdots \subset E_k = E
\]
where each subquotient $G_i = E_{i}/E_{i-1}$ is a \emph{stable} vector bundle with $\mu(G_i) = \mu(E)$.
This filtration is not unique, but the list of subquotients is unique by the following result. See \cite{potier} for a proof.
\begin{proposition}[Jordan--H\"older filtration]
  Let $E$ be a semistable vector bundle on $X$. Suppose that
  \[
    0 = E_0 \subset E_1 \subset E_2 \subset \cdots \subset E_k = E
  \]
  and
  \[
    0 = E_0' \subset E_1' \subset E_2'\subset \cdots \subset E_{\ell}' = E
  \]
  are two Jordan--H\"older filtrations with associated graded bundles $G_i = F_i/F_{i-1}$ and $G_i'= F_i'/F_{i-1}'$. We have $\ell = k$, and there exists a permutation $\sigma \in\Sym_k$ such that $G_i' \cong G_{\sigma(i)}$.
\end{proposition}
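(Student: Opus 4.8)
The plan is to recognize the statement as an instance of the Jordan--Hölder theorem in a suitable abelian category, so that once the right category is identified the uniqueness becomes formal.

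First I would fix $\mu = \mu(E)$ and consider the full subcategory $\mathcal{C}_\mu$ of coherent sheaves on $X$ whose objects are $0$ together with the semistable vector bundles of slope exactly $\mu$. The heart of the argument is to verify that $\mathcal{C}_\mu$ is an abelian subcategory, i.e.\ that it is closed under taking kernels, images, and cokernels of its morphisms. Given a nonzero morphism $f\colon A \to B$ in $\mathcal{C}_\mu$, let $I = \im(f)$. As a quotient of the semistable bundle $A$ one has $\mu(I) \geq \mu$ (using that semistability of $A$ is equivalent to $\mu(Q) \geq \mu(A)$ for every nonzero quotient $Q$, which follows from the subsheaf definition by passing to kernels), while as a subsheaf of the semistable bundle $B$ one has $\mu(I) \leq \mu$; hence $\mu(I) = \mu$. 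Additivity of rank and degree in the short exact sequences $0 \to \ker f \to A \to I \to 0$ and $0 \to I \to B \to \coker f \to 0$ then forces $\mu(\ker f) = \mu(\coker f) = \mu$. A short slope computation shows that a subsheaf (resp.\ quotient) of a semistable bundle of slope $\mu$ which itself has slope $\mu$ is again semistable, so $\ker f$, $I$, and $\coker f$ all lie in $\mathcal{C}_\mu$. Because these sheaf-theoretic kernels and cokernels remain inside the subcategory, they also serve as the categorical kernels and cokernels, and $\mathcal{C}_\mu$ is abelian.

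Next I would identify the simple objects of $\mathcal{C}_\mu$. A subobject of $V \in \mathcal{C}_\mu$ is precisely a subsheaf $V' \subset V$ of slope $\mu$; by Definition~\ref{defn:semistability} such a $V$ has no nonzero proper subobject exactly when it is stable. Thus the simple objects of $\mathcal{C}_\mu$ are the stable bundles of slope $\mu$, and a Jordan--Hölder filtration of $E$ in the sense of the statement is nothing but a composition series for $E$ in $\mathcal{C}_\mu$. Moreover every object of $\mathcal{C}_\mu$ has finite length: rank is additive on short exact sequences and strictly positive on nonzero objects, so any chain of subobjects of $E$ has length at most $\rk(E)$.

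With these identifications in place, the statement is exactly the classical Jordan--Hölder theorem for finite-length objects in an abelian category, which I would deduce via the Schreier refinement (Zassenhaus) argument: any two composition series admit equivalent refinements, and since a composition series admits no proper refinement, the two series are already equivalent. This yields $\ell = k$ together with a permutation $\sigma \in \Sym_k$ with $G_i' \cong G_{\sigma(i)}$. The one genuinely nontrivial input is the verification that $\mathcal{C}_\mu$ is abelian; once semistability of the relevant subquotients is secured by the slope computations above, the remainder is purely formal.
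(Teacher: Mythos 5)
The paper does not actually prove this proposition --- it simply refers the reader to \cite{potier} --- so there is no internal proof to compare against line by line. Your approach is the standard one, and essentially the argument found in the cited reference: the semistable bundles of a fixed slope $\mu$, together with $0$, form an abelian full subcategory $\mathcal{C}_\mu$ of coherent sheaves whose simple objects are exactly the stable bundles of slope $\mu$; every object has finite length; and the abstract Jordan--H\"older theorem then yields the claim. The slope sandwich $\mu \le \mu(I) \le \mu$ for the image, the additivity argument for kernel and cokernel, the identification of simples with stables, and the rank bound on chains are all correct. The identification of a Jordan--H\"older filtration in the sense of the statement with a composition series in $\mathcal{C}_\mu$ is also legitimate, though it deserves a sentence: each $E_i \subset E$ has slope $\mu$ by additivity of rank and degree along the subquotients, hence lies in $\mathcal{C}_\mu$ by your own subsheaf lemma.

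The one step that needs repair is the cokernel. Objects of $\mathcal{C}_\mu$ are vector \emph{bundles}, and your slope computation does not show that $C = \coker f$ is locally free: a priori $C$ could have torsion (quotients of locally free sheaves on a curve frequently do, e.g.\ $\mathcal{O}_X(-x) \subset \mathcal{O}_X$), in which case $C$ is not an object of $\mathcal{C}_\mu$ regardless of its slope, and the verification that $\mathcal{C}_\mu$ is abelian is incomplete as written. The fix uses semistability of $B$ once more, with tools you already set up: if $T \subset C$ is the torsion subsheaf and $T \neq 0$, then $C/T$ is a locally free quotient of $B$ of slope $\mu(C) - \operatorname{length}(T)/\rk(C) < \mu$, contradicting the quotient characterization of semistability of $B$ that you invoked at the start; hence $T = 0$ and $C$ is locally free. (In the edge case $\rk(C) = 0$, additivity gives $\deg(C) = 0$, so $C = 0$ and $f$ is surjective.) Kernels and images need no such care, since --- as the paper records --- any subsheaf of a locally free sheaf on a smooth curve is locally free. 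With this one addition your proof is complete.
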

Hence the \emph{associated graded}~$\gr_E=\bigoplus_{i=1}^kG_i$ of a semistable bundle $E$ is well defined up to isomorphism. This allows us to make the following definition, where the S refers to Seshadri.
\begin{definition}
  Semistable vector bundles~$E$ and~$E'$ of the same rank and degree are called \emph{S-equivalent} if their associated graded vector bundles 
  are isomorphic.
\end{definition}

We can think of the Jordan--H\"older filtration as stating that semistable vector bundles are built out of stable vector bundles of the same slope. Meanwhile, the result below tells us that \emph{all} vector bundles are built out of semistable bundles in a \emph{unique} way. See again \cite{potier} for a proof.

\begin{proposition}[Harder--Narasimhan filtration]
  Let $F$ be a vector bundle on $X$. There exists a unique filtration of $F$ by subbundles
  \[
    0 = F_0 \subset F_1 \subset F_2 \subset \cdots \subset F_k = F
  \]
  such that $G_i = F_i/F_{i-1}$ is semistable for each $i$ and
  \[
    \mu(G_1) > \mu(G_2) > \cdots > \mu(G_k).
  \]
  This filtration is called the \emph{Harder--Narasimhan filtration}.
\end{proposition}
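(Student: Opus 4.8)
The plan is to prove existence and uniqueness simultaneously by extracting a canonical first step of the filtration---the \emph{maximal destabilizing subsheaf}---and then inducting on the rank of $F$. Set $\mu_{\max}(F) = \sup\{\mu(E') : 0 \neq E' \subseteq F\}$. The entire argument rests on the claim that this supremum is finite and attained; granting this, I would define $F_1 \subseteq F$ to be a subsheaf of slope $\mu_{\max}(F)$ that is maximal with respect to inclusion among all such subsheaves, and show that $0 \subset F_1 \subset F$ begins the desired filtration.

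For existence, first I would record two routine slope computations from the exact sequences $0 \to E_1 \cap E_2 \to E_1 \oplus E_2 \to E_1 + E_2 \to 0$ and $0 \to K \to A \to Q \to 0$: namely that a sum of two subsheaves of slope $\mu_{\max}(F)$ again has slope $\mu_{\max}(F)$ (using that $E_1 \cap E_2 \subseteq E_1$, which is automatically semistable of slope $\mu_{\max}(F)$, so $\mu(E_1 \cap E_2) \leq \mu_{\max}(F)$), and that any quotient of a semistable sheaf has slope at least that of the sheaf. The first computation shows that the subsheaves of slope $\mu_{\max}(F)$ are closed under sum; since their ranks are bounded by $\rk(F)$, there is a unique largest one, $F_1$, and it contains every subsheaf of slope $\mu_{\max}(F)$. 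Any destabilizing subsheaf of $F_1$ would violate maximality of $\mu_{\max}(F)$, so $F_1$ is semistable; and its saturation has the same rank but degree $\geq \deg(F_1)$, hence slope $\mu_{\max}(F)$, so by maximality $F_1$ is saturated and $F/F_1$ is again a vector bundle. Applying the inductive hypothesis to $F/F_1$ and taking preimages produces the filtration; the strict inequality $\mu(G_1) > \mu(G_2)$ is exactly the statement $\mu_{\max}(F/F_1) < \mu_{\max}(F)$, which follows because the preimage in $F$ of a subsheaf of $F/F_1$ of slope $\geq \mu_{\max}(F)$ would be a subsheaf of $F$ of slope $\mu_{\max}(F)$ strictly larger than $F_1$.

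For uniqueness, suppose $0 = F_0' \subset \cdots \subset F_k' = F$ is any filtration as in the statement. I would show that its first step $F_1'$ coincides with the maximal destabilizing subsheaf $F_1$, after which induction on $F/F_1 = F/F_1'$ finishes the job. The key point is that any semistable subsheaf $A \subseteq F$ with $\mu(A) > \mu(G_2')$ must land in $F_1'$: by Lemma~\ref{lemma:no-morphisms} we have $\Hom_X(A, G_j') = 0$ for every $j \geq 2$ since $\mu(A) > \mu(G_2') \geq \mu(G_j')$, and d\'evissage along the induced filtration of $F/F_1'$ with subquotients $G_2', \dots, G_k'$ then forces the composite $A \to F/F_1'$ to vanish. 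Taking $A = F_1$ (which has slope $\mu_{\max}(F) \geq \mu(G_1') > \mu(G_2')$) gives $F_1 \subseteq F_1'$; since $F_1'$ is semistable this yields $\mu(F_1') \geq \mu(F_1) = \mu_{\max}(F)$, forcing equality and hence $F_1' \subseteq F_1$ by the containment property of $F_1$. Thus $F_1 = F_1'$.

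The main obstacle is the boundedness input underlying the very definition of $F_1$: one must know that for each fixed rank $r' \leq \rk(F)$ the degrees of rank-$r'$ subsheaves of $F$ are bounded above, so that $\mu_{\max}(F)$ is a finite rational number realized by some subsheaf. Since it suffices to bound saturated subsheaves, and a saturated rank-$r'$ subbundle $E'$ determines a line subbundle $\det E' \hookrightarrow \wedge^{r'} F$, this reduces to the classical finiteness of the maximal degree of a line subbundle of a fixed vector bundle on $X$. This is the one genuinely nonformal ingredient; everything else is the formal theory of the maximal destabilizing subsheaf.
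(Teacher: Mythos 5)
Your proof is correct. There is, however, nothing in the paper to compare it against: the paper does not prove this proposition at all, but defers to Le Potier's book (\cite{potier}). What you have written is essentially the standard argument that such references give: construct the maximal destabilizing subsheaf $F_1$ as the unique largest subsheaf of maximal slope (closure of such subsheaves under sums, plus bounded rank, gives existence and the containment property), check it is semistable and saturated, induct on rank using $\mu_{\max}(F/F_1) < \mu_{\max}(F)$, and prove uniqueness by showing any competing first step equals $F_1$ via Hom-vanishing and d\'evissage --- and the paper's Lemma~\ref{lemma:no-morphisms} is exactly the vanishing statement your d\'evissage needs. Your identification of boundedness of slopes of subsheaves as the one nonformal ingredient is accurate, and your reduction (saturate, pass to $\det E' \hookrightarrow \wedge^{r'} F$, then invoke the classical upper bound on degrees of line subbundles of a fixed bundle) is sound. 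One remark that would strengthen the write-up: the paper itself proves precisely this boundedness, inside the proof of Proposition~\ref{prop:openness-of-semistability}, by pushing subsheaves forward along a finite morphism $X \to \mathbb{P}^1$ and using the Birkhoff--Grothendieck splitting of Example~\ref{example:P1}; citing that argument instead of the classical line-subbundle fact would make your proof self-contained relative to the paper's own toolkit.
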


\begin{example}[$X = \mathbb{P}^1$]
  \label{example:P1}
  By a theorem of Birkhoff and Grothendieck, vector bundles on $X = \mathbb{P}^1$ split as direct sums of line bundles. Writing $E \cong \bigoplus_{i=1}^k \mathcal{O}_{\mathbb{P}^1}(d_i)^{\oplus m_i}$ with $d_1 > d_2 > \cdots > d_k$, the associated graded bundles of the Harder--Narasimhan filtration are $G_i = \mathcal{O}_{\mathbb{P}^1}(d_i)^{\oplus m_i}$ and their slopes are $\mu(G_i) = d_i$. In particular $E$ is semistable if and only if $k = 1$, and stable if and only if it is a line bundle.
\end{example}

\subsection{Openness of semistability}
\label{subsec:openness-of-semistability}
We can now show that semistability is an open property in families.
We will use the results from \cite[\spref{0DM1}]{stacks-project} on Quot, where we will denote~$\mathrm{Quot}_{\mathcal{E}/X/B}^{P(t)}$ the object from \cite[\spref{0DP6}]{stacks-project} for a numerical polynomial~$P(t)$, a morphism~$f\colon X\to B$ of schemes, and a quasicoherent sheaf~$\mathcal{E}$ on~$X$. 


\begin{proposition}[Openness of semistability]
  \label{prop:openness-of-semistability}
  Let $S$ be a $k$-scheme and $\mathcal{E}$ be a vector bundle of rank $r$ and relative degree $d$ on $X \times S \rightarrow S$. The points $p \in S$ for which $\mathcal{E}_p$ is semistable form an open subscheme.
\end{proposition}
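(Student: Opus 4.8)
The plan is to prove that the \emph{complement} of the semistable locus is closed, by exhibiting it as a finite union of images of relative Quot schemes. Since openness is local on $S$, and $\mathcal{E}$ is of finite presentation, a standard limit argument lets me assume that $S$ is of finite type over $k$, in particular Noetherian.

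First I would restate semistability in terms of quotients. A fiber $\mathcal{E}_p$ is non-semistable exactly when it admits a quotient $\mathcal{E}_p\twoheadrightarrow Q$ with $1\le\rk(Q)\le r-1$ and $\mu(Q)<\mu(\mathcal{E}_p)=d/r$: from a destabilizing subsheaf one passes to its saturation and the resulting locally free quotient, and conversely the kernel of such a $Q$ is destabilizing by the see-saw identity (the case $\rk(Q)=r$ is excluded because a rank-$0$ subsheaf of a bundle on a curve vanishes, forcing $Q\cong\mathcal{E}_p$). Thus $\mathcal{E}_p$ is non-semistable if and only if some relative Quot scheme $\mathrm{Quot}^{P}_{\mathcal{E}/(X\times S)/S}$ with $1\le\rk_P\le r-1$ and $\mu_P<d/r$ has a point over $p$; conversely, any point of such a Quot scheme over $p$ witnesses non-semistability, since dividing $Q$ by its torsion only lowers the slope.

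The main obstacle is that, a priori, infinitely many such polynomials $P$ occur because $\deg Q$ is unbounded below, whereas only a \emph{finite} union of closed sets is guaranteed to be closed. I would remove this with a uniform lower slope bound. Choosing $N\gg0$ so that $\mathcal{E}\otimes\mathrm{pr}_X^*\mathcal{O}_X(N)$ is globally generated relative to $S$ — which holds uniformly as $S$ is Noetherian — gives for each $p$ a surjection $\mathcal{O}_X^{\oplus h}\twoheadrightarrow\mathcal{E}_p(N)$. Any quotient $\mathcal{E}_p\twoheadrightarrow Q$ then produces a surjection $\mathcal{O}_X^{\oplus h}\twoheadrightarrow Q(N)$, and applying Lemma~\ref{lemma:no-morphisms} to the minimal Harder--Narasimhan quotient of $Q(N)$ (a semistable sheaf admitting a nonzero map from the slope-$0$ bundle $\mathcal{O}_X$) forces $\mu(Q(N))\ge0$, i.e.\ $\mu(Q)\ge -N$. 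Hence every destabilizing quotient satisfies $-N\le\mu(Q)<d/r$, so $\deg Q=\rk(Q)\,\mu(Q)$ ranges over a finite set of integers; together with $1\le\rk(Q)\le r-1$ this leaves only finitely many Hilbert polynomials $P$.

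It remains to invoke properness of Quot. For each of the finitely many relevant $P$, the morphism $\mathrm{Quot}^{P}_{\mathcal{E}/(X\times S)/S}\to S$ is projective by \spref{0DM1}, so its image $Z_P\subseteq S$ is closed, and $Z_P$ lies in the non-semistable locus by the reformulation above. Therefore the non-semistable locus equals the finite union $\bigcup_P Z_P$ and is closed, so its complement — the semistable locus — is the desired open subscheme. The only delicate input is the uniform global generation underlying the slope bound, where the Noetherian reduction and Lemma~\ref{lemma:no-morphisms} do the work.
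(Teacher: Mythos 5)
Your proof is correct, and its skeleton matches the paper's: reduce to $S$ of finite type over $k$, reformulate non-semistability via destabilizing quotients, express the non-semistable locus as a finite union of images of relative Quot schemes, and conclude by properness of Quot. Where you genuinely diverge is in the heart of the argument, namely the finiteness of the set of Hilbert polynomials, i.e.\ the uniform bound on slopes of destabilizing quotients (equivalently, on degrees of destabilizing subsheaves). The paper gets this by choosing a finite morphism $f\colon X \to \mathbb{P}^1$, pushing forward, and using Birkhoff--Grothendieck together with semicontinuity of $\hh^0$ to bound the occurring splitting types of $f_*\mathcal{E}_s$, hence the Euler characteristics of all subsheaves. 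You instead twist by a uniform $\mathcal{O}_X(N)$ (relative Serre over the Noetherian base) so that every fiber $\mathcal{E}_p(N)$ is a quotient of a trivial bundle, and then apply Lemma~\ref{lemma:no-morphisms} to the minimal Harder--Narasimhan quotient of $Q(N)$ to force $\mu(Q(N)) \geq 0$; this is essentially Grothendieck's lemma on boundedness of quotient sheaves, as in Le Potier or Huybrechts--Lehn. Each route has its merits: yours reuses machinery already in the paper (the no-morphisms lemma and HN filtrations), avoids the auxiliary choice of a map to $\mathbb{P}^1$, and generalizes verbatim to higher-dimensional $X$, where Birkhoff--Grothendieck is unavailable; the paper's route is more elementary in that it needs only the classification of bundles on $\mathbb{P}^1$ and semicontinuity, with no twisting or HN theory. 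Two cosmetic points, neither a gap: your lower bound should read $\mu(Q) \geq -N\deg\mathcal{O}_X(1)$ for the chosen polarization rather than $-N$, and when $Q(N)$ has torsion the ``minimal HN quotient'' should be understood as that of $Q(N)$ modulo its torsion --- harmless, since torsion only raises the slope, as you yourself note in the reformulation step.
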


\begin{proof}
    By standard approximation techniques we can assume that $S$ is of finite type over~$k$.  We may also assume that $S$ is connected.
    We will show that the locus of points $s \in S$ for which $\mathcal{E}_s$ is \emph{not semistable} is closed by expressing it as a finite union of images of proper morphisms. Because our family is flat, the Hilbert polynomial $\mathrm{P}_{\mathcal{E}_s}(t)$ is independent of $p \in S$.

  By definition, $\mathcal{E}_s$ is not semistable if and only if there exists some subsheaf $E' \hookrightarrow \mathcal{E}_s$ with $\mu(E') > d/r$. Considering the cokernel of $E' \hookrightarrow \mathcal{E}_s$, we see that $\mathcal{E}_s$ is not semistable if and only if $\mathcal{E}_s$ has a quotient with Hilbert polynomial $\mathrm{P}_{\mathcal{E}_s}(t) - \mathrm{P}_{E'}(t)$ for some $E'$ with $\mu(E') > d/r$. In other words, the set of $s \in S$ such that $\mathcal{E}_s$ is not semistable is the union of images of relative Quot schemes $\mathrm{Quot}^{Q(t)}_{\mathcal{E}/X\times S/S}$ for the projection $X \times S \to S$, where $Q(t)$ ranges over the set
  \begin{equation*}
    \left\{ Q(t) = \mathrm{P}_{\mathcal{E}_s}(t) - \mathrm{P}_{E'}(t) \mid \text{$\exists s \in S$, $E' \subset \mathcal{E}_s$ with $\mu(E') > d/r$} \right\}.
  \end{equation*}
  Relative Quot schemes are proper by \cite[\spref{0DPC}]{stacks-project}, so to show that the nonsemistable locus is closed, it suffices to show that the above set of polynomials is finite.

  The Hilbert polynomial $\mathrm{P}_{E'}(t)$ is determined by the rank and degree of $E'$. Being a subsheaf of $E$ there are only finitely many possibilities for the rank of $E'$. The requirement $\mu(E') > d/r$ puts a lower bound on the degree of $E'$. It remains to show that the degrees of all subsheaves of $\mathcal{E}_s$ as $s$ varies over $S$ are bounded above.

  To see this, choose a finite morphism~$X\to\mathbb{P}^1$  and consider the induced morphism $f\colon X \times S \rightarrow \mathbb{P}^1 \times S$. If $E' \subset \mathcal{E}_p$ is a subsheaf, then $f_* E' \subset f_* \mathcal{E}_p$ is a subsheaf.

  According to Example~\ref{example:P1}, vector bundles on $\mathbb{P}^1$ split as direct sums of line bundles. Only finitely many splitting types occur among the $f_*\mathcal{E}_s$ for $s \in S$, since otherwise $\hh^0(\mathbb{P}^1, f_*\mathcal{E}_s)$ would achieve infinitely many values, which is impossible by semicontinuity. Moreover, for each vector bundle on $\mathbb{P}^1$, the set of Euler characteristics of all its subsheaves is bounded above.

  Because $f$ is finite, $\chi(X,f_*E') = \chi(X,E')$. It follows that the Euler characteristics of all subsheaves $E' \subset \mathcal{E}_s$ are bounded above and, hence, the degrees of all subsheaves $E' \subset \mathcal{E}_s$ are bounded above.
\end{proof}

\subsection{Boundedness and irreducibility}
To obtain a finite-type moduli space, the objects parameterized by the moduli problem must be bounded in the following sense.

\begin{definition}[Boundedness]
  A collection of isomorphism classes of vector bundles on $X$ is called \emph{bounded} if there exists a scheme $S$ of finite type and a vector bundle $\mathcal{F}$ on $X \times S \to S$ such that every isomorphism class in the collection is represented by $\mathcal{F}_s$ for some $k$-point $s \in S$.
\end{definition}

When ranging over a bounded collection of isomorphism classes, the set of values of any discrete algebraic invariant must be finite. 
However, as soon as the rank is at least $2$, fixing the rank and degree is not sufficient to obtain a bounded family.

\begin{example}[An unbounded family] \label{notbounded}
Let $x \in X$ be a point, and consider the collection $\{F_n\}$ of rank $r \geq 2$, degree $d$ vector bundles
\[
  F_n = \mathcal{O}_X(-nx) \oplus \mathcal{O}_X((n+d)x) \oplus \mathcal{O}_X^{\oplus (r-2)}.
\]
The range of values of $\hh^1(X, F_n)$ is unbounded in this family. Hence, the collection of all isomorphism classes of vector bundles of fixed rank $r \geq 2$ and degree $d$ cannot be bounded.
\end{example}

One important source of bounded families comes from quotients of a fixed vector bundle.

\begin{example}[Boundedness of quotients]
  \label{quot}
  Fix a vector bundle $F$ on $X$ and a polynomial $P(t)\in\mathbb{Z}[t]$. The Quot scheme $\mathrm{Quot}^{P(t)}_{F/X/\Spec k}$ parameterizing quotients of $F$ with Hilbert polynomial $P(t)$ is of finite type by \cite[\spref{0DP9}]{stacks-project}. Therefore, the collection of isomorphism classes of vector bundles that appear as a quotient of $F$ with Hilbert polynomial $P(t)$ is bounded: restricting the universal quotient sheaf on $X \times \mathrm{Quot}^{P(t)}_{F/X/\Spec k}$ to the open locus where the quotient is locally free provides a family of vector bundles realizing each isomorphism class.
\end{example}

Example \ref{quot} helps us see that semistable bundles of fixed rank and degree are bounded.

\begin{proposition}[Boundedness of semistability]
  \label{proposition:boundedness-of-semistability}
  The collection of isomorphism classes of semistable vector bundles of rank $r$ and degree $d$ on $X$ is bounded.
\end{proposition}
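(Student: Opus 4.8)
The plan is to reduce, via twisting by a positive line bundle, to the situation of Lemma~\ref{lemma:globally-generated}, in which every semistable bundle becomes globally generated with vanishing $\HH^1$. In that regime each bundle can be exhibited as a quotient of one \emph{fixed} trivial bundle, and then the Quot-scheme boundedness of Example~\ref{quot} applies directly. The semistability hypothesis enters only through Lemma~\ref{lemma:globally-generated}, and this is precisely the input that Example~\ref{notbounded} shows cannot be dispensed with.

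First I would reduce to the case of large degree. Fix a closed point $x \in X$, let $L_0 \colonequals \mathcal{O}_X(x)$, and for an integer $n$ consider the assignment $E \mapsto E \otimes L_0^{\otimes n}$. By Lemma~\ref{lemma:semistable-tensor-line-bundle} this is a bijection between the semistable bundles of rank $r$ and degree $d$ and those of rank $r$ and degree $d + rn$, and it preserves boundedness in both directions: a family $\mathcal{F}$ on $X \times S$ realizing one collection yields the family $\mathcal{F} \otimes \mathrm{pr}_X^* L_0^{\otimes n}$ realizing the other. Choosing $n$ so that $d + rn > r(2g - 1)$, we see that it suffices to prove the proposition under the hypothesis $d > r(2g - 1)$, equivalently $\mu(E) > 2g - 1$.

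Under this hypothesis Lemma~\ref{lemma:globally-generated} tells us that every bundle $E$ in the collection is globally generated and satisfies $\HH^1(X, E) = 0$. Global generation gives a surjection $\mathcal{O}_X^{\oplus \hh^0(X, E)} \twoheadrightarrow E$, while the vanishing of $\HH^1$ forces, by Riemann--Roch, $\hh^0(X, E) = \chi(X, E) = d + r(1 - g)$, an integer $N$ depending only on $r$, $d$, and $g$. Hence every such $E$ is a locally free quotient of the single bundle $F \colonequals \mathcal{O}_X^{\oplus N}$, with Hilbert polynomial $P(t) = \mathrm{P}_E(t)$ determined by $r$ and $d$. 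Example~\ref{quot} then shows that the collection of locally free quotients of $F$ with this fixed Hilbert polynomial is bounded, and since each of our bundles occurs as such a quotient, the collection is bounded. Untwisting via the previous paragraph completes the proof.

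The substantive content is entirely carried by Lemma~\ref{lemma:globally-generated}; granting it, the remaining steps are formal. The one point I would check with care is that $\hh^0(X, E)$ is genuinely \emph{constant} across the collection, rather than merely bounded---this is exactly what the vanishing of $\HH^1$ supplies---so that a single integer $N$, and thus a single bundle $F$, serves for every $E$ simultaneously. This uniformity is what lets us apply Example~\ref{quot} to one Quot scheme rather than to an a priori unbounded family of them.
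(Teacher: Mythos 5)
Your proposal is correct and follows essentially the same route as the paper: twist by $\mathcal{O}_X(nx)$ to reach slope $>2g-1$, apply Lemma~\ref{lemma:globally-generated} to realize each bundle as a quotient of a fixed trivial bundle of rank $\chi = d + r(1-g)$ (constant by the vanishing of $\HH^1$), and conclude via Example~\ref{quot}. The only cosmetic difference is that the paper untwists the surjection itself (presenting $E$ as a quotient of $V \otimes \mathcal{O}_X(-nx)$), whereas you prove boundedness of the twisted collection and transfer it back by twisting the parameterizing family; these are interchangeable.
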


\begin{proof}
  Let $x \in X$ be a $k$-point and let $n$ be an integer such that $d/r + n > 2g - 1$. For every semistable vector bundle $E$, the vector bundle $E\otimes\mathcal{O}_X(nx)$ is semistable by Lemma \ref{lemma:semistable-tensor-line-bundle} and has slope greater than $2g - 1$. Hence, by Lemma \ref{lemma:globally-generated}, the vector bundle $E\otimes\mathcal{O}_X(nx)$ is globally generated and $\hh^1(X, E\otimes\mathcal{O}_X(nx)) = 0$. In other words,
  $E\otimes\mathcal{O}_X(nx)$ is a quotient of $V \otimes \mathcal{O}_X$ where $V$ is a vector space of dimension $\hh^0(X, E\otimes\mathcal{O}_X(nx))$, and this dimension is independent of $E$. It follows that every semistable bundle $E$ of rank $r$ and degree $d$ arises as a quotient
\[ V \otimes \mathcal{O}_X(-nx) \rightarrow E.\]
Boundedness of this family now follows from Example \ref{quot} with $F = V \otimes \mathcal{O}_X(-nx)$ and $P(t) = d + r(t + 1 - g)$.
\end{proof}

\begin{proposition}[Irreducibility of the moduli stack] \label{prop:irreducibility}
  The moduli stack $\modulistack[\semistable]{X}{r}{d}$ is irreducible. The same is true for~$\modulistack[\semistable]{X}{r}{L}$.
\end{proposition}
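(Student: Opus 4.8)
The plan is to deduce the irreducibility of $\modulistack[\semistable]{X}{r}{d}$ from that of the full stack $\modulistack{X}{r}{d}$ of all vector bundles. By Proposition~\ref{prop:openness-of-semistability} the semistable locus is an open substack of $\modulistack{X}{r}{d}$, and it is nonempty because stable (hence semistable) bundles of every rank and degree exist on a curve of genus $g\ge 2$, which is classical. Since a nonempty open substack of an irreducible stack is irreducible, it suffices to show that $\modulistack{X}{r}{d}$ is irreducible. By Proposition~\ref{proposition:smooth} this stack is smooth, and the Riemann--Roch computation of Remark~\ref{remark:dimension} gives $\dim_{[E]}\modulistack{X}{r}{d}=\dim\Ext_X^1(E,E)-\dim\Hom_X(E,E)=r^2(g-1)$ at every point; hence $\modulistack{X}{r}{d}$ has pure dimension $r^2(g-1)$, so every irreducible component has this dimension.

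I would prove irreducibility of $\modulistack{X}{r}{d}$ by induction on $r$, the base case $r=1$ being irreducibility of $\stPic_X^d$. For the inductive step, stratify by the invariant $s(E)$, the largest degree of a saturated line subbundle of $E$. The locus $\{s(E)\ge e\}$ is closed: in a family it is the image, under the projection off the proper Picard scheme $\Pic_X^e$, of the locus $\{(E,A):\HH^0(E\otimes A^{-1})\ne 0\}$, which is closed by semicontinuity. Thus $s$ is upper semicontinuous and attains a minimal value $s_{\min}$ on a nonempty open substack $\mathcal{U}=\{s=s_{\min}\}$. Any $E$ with $s(E)=e$ sits in an extension $0\to A\to E\to Q\to 0$ with $A\in\stPic_X^e$ and $Q$ of rank $r-1$; letting $(A,Q)$ range over the irreducible (by induction) stack $\stPic_X^e\times\modulistack{X}{r-1}{d-e}$ and the class range over $\Ext_X^1(Q,A)$, the open locus where these groups have constant rank is an affine-space bundle over an irreducible base that dominates $\{s=e\}$. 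Hence each stratum, and in particular $\mathcal{U}$, is irreducible.

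It then remains to show that $\mathcal{U}$ is dense, which is the heart of the matter. For the stack $\mathcal{Z}_e$ of extensions with $\deg A=e$, the fibration above gives $\dim\mathcal{Z}_e=(r^2-r+1)(g-1)+(d-re)$, so the stratum $\{s=e\}$, being dominated by $\mathcal{Z}_e$, has codimension at least $(r-1)(g-1)-d+re$ in $\modulistack{X}{r}{d}$. This quantity is strictly increasing in $e$ and vanishes at $e^\ast=(d-(r-1)(g-1))/r$. If one knows that $s_{\min}\ge e^\ast$, so that every nonempty stratum with $e>s_{\min}$ has positive codimension, then, since $\modulistack{X}{r}{d}$ is of pure dimension $r^2(g-1)$, no irreducible component can avoid $\mathcal{U}$; hence $\mathcal{U}$ is dense and $\modulistack{X}{r}{d}$ is irreducible. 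Establishing $s_{\min}\ge e^\ast$, i.e.\ that the generic maximal-subbundle degree realizes the unique top-dimensional stratum, is the main obstacle: it is the content of the Segre (Shatz) stratification, and additional care is needed because $\Ext_X^1(Q,A)$ jumps in dimension away from the generic locus.

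Finally, $\modulistack[\semistable]{X}{r}{L}$ is treated by running the same induction with the determinant fixed. In an extension $0\to A\to E\to Q\to 0$ with $\det E\cong L$ one has $\det Q\cong L\otimes A^\vee$, so the factor $\stPic_X^e$ survives while $Q$ is constrained to a fixed-determinant stack of rank $r-1$; one therefore proves the variable- and fixed-determinant statements together by induction. I would emphasize that this case is not a formal corollary of the variable-determinant one: the fibers of $\det\colon\modulistack{X}{r}{d}\to\stPic_X^d$ could a priori be permuted by the $r$-torsion of the Jacobian, and it is precisely the stratification and dimension argument, applied with fixed determinant, that excludes this.
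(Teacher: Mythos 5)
Your reduction (semistable locus is open and nonempty, so it suffices to prove the \emph{full} stack $\modulistack{X}{r}{d}$ is irreducible) is sound, and your purity and codimension computations for the stratification by the maximal degree $s(E)$ of a line subbundle are correct as far as they go. But the proposal has a genuine gap at exactly the point you flag as ``the main obstacle,'' and it is not a detail one can wave at: your codimension estimate $(r-1)(g-1)-d+re$ is positive only for $e>e^\ast$, so for strata with $e\le e^\ast$ it gives no information at all. Nothing in your argument rules out an irreducible component of $\modulistack{X}{r}{d}$ whose generic bundle has all line subbundles of small degree; such a component would consist of low strata, where your dimension bound is vacuous. Closing this requires a genuine theorem, namely the Nagata/Mukai--Sakai bound that every rank-$r$, degree-$d$ bundle admits a line subbundle of degree at least $(d-(r-1)g)/r$. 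Granting it, strata with $e$ below this bound are empty, and since $e^\ast-(d-(r-1)g)/r=(r-1)/r<1$, at most one integer value of $e$ escapes the positive-codimension estimate, which completes your argument; but as written the heart of the proof is asserted, not proved. Two secondary issues: irreducibility of a stratum needs the \emph{whole} extension stack (including the locus where $\Ext^1_X(Q,A)$ jumps) to be irreducible, since the constant-rank open locus you invoke need not dominate the stratum; and when bounding the dimension of a stratum by that of the extension stack dominating it, you should note the forgetful map is representable (its fibers are spaces of line subbundles), because for non-representable maps of stacks the image can have larger dimension than the source.

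It is worth comparing with the paper, which sidesteps all of this and never needs irreducibility of $\modulistack{X}{r}{d}$. There one first twists so that the slope exceeds $2g-1$, making every semistable bundle $E$ globally generated; a general-position argument with Schubert varieties in $\mathrm{Gr}(r-1,\HH^0(X,E))$ then produces $\Gamma\subset\HH^0(X,E)$ of dimension $r-1$ with $\Gamma\otimes\mathcal{O}_X\to E$ of rank $r-1$ at every point, exhibiting $E$ as an extension $0\to\mathcal{O}_X^{r-1}\to E\to\det E\to 0$. Hence $\modulistack[\semistable]{X}{r}{d}$ is the image of the semistable open locus of an irreducible variety of such extensions fibered over $\Pic_X^d$, which gives irreducibility in one stroke, uniformly in $r$, with the fixed-determinant case handled by the same construction over a single point of $\Pic_X^d$ (no issue of ``permuting fibers'' ever arises, since one dominates $\modulistack[\semistable]{X}{r}{L}$ directly by an open subset of $\Ext^1_X(L,\mathcal{O}_X^{r-1})$).
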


\begin{proof}
  For a closed point $x \in X(k)$, Lemma \ref{lemma:semistable-tensor-line-bundle} says that $E$ is semistable if and only if $E\otimes\mathcal{O}_X(x)$ is semistable. This gives rise to an isomorphism $\modulistack[\semistable]{X}{r}{d} \to \mathcal{M}^{\semistable}_X(r,d+r)$. Therefore, it suffices to prove the claim when $d > r(2g - 1)$. In other words, we may assume that the slope is at least $2g - 1$, and so all semistable bundles of our fixed rank and degree are globally generated by Lemma \ref{lemma:globally-generated}.

  We will construct an irreducible variety $V^{\mathrm{ss}}$ that maps to $\modulistack[\semistable]{X}{r}{d}$ surjectively. Because $V^{\mathrm{ss}}$ is a scheme the morphism is automatically representable, and so we get an induced surjection on the level of topological spaces by \cite[\spref{04XI}]{stacks-project}. But then the topological space is irreducible, and hence so is the stack.  Our space $V^{\mathrm{ss}}$ will parameterize semistable extensions of a degree-$d$ line bundle by a rank-$(r - 1)$ trivial bundle. For a fixed line bundle $L$, the extensions
  \begin{equation} \label{ext}
  0 \rightarrow \mathcal{O}_X^{r-1} \rightarrow E \rightarrow L \rightarrow 0
  \end{equation}
  are parameterized by $\mathrm{Ext}_X^1(L, \mathcal{O}_X^{r-1}) = \HH^1(X, L^\vee)^{r-1}$. To parameterize such extensions of all degree $d$ line bundles, let $\mathcal{L}$ be a Poincar\'e line bundle on $X \times \Pic_X^d$ and let $\pi\colon X \times \Pic_X^d \to \Pic_X^d$ be the projection. By the theorem on cohomology and base change, the fiber of the vector bundle $V = (\mathrm{R}^1\pi_* \mathcal{L}^\vee)^{r-1}$ over $L \in \Pic_X^d$ is the space of extensions $\HH^1(X, L^\vee)^{r-1}$, and there is a universal extension on $X \times V$. By Proposition \ref{prop:openness-of-semistability}, the locus $V^{\mathrm{ss}}$ where the universal extension is semistable is an open subvariety, so we obtain a map $V^{\mathrm{ss}} \rightarrow \modulistack[\semistable]{X}{r}{d}$. Clearly $V$ is irreducible, hence so is the open subvariety $V^{\semistable}$.

  To show that this map is \emph{surjective}, we must show that every semistable bundle $E$ arises as an extension as in \eqref{ext}. By the first paragraph of the proof, $E$ is globally generated. For such an $E$, we claim there exists a subspace $\Gamma \subset \HH^0(X, E)$ of dimension $r - 1$, so that the evaluation map\begin{equation} \label{eval}\Gamma \otimes \mathcal{O}_X \rightarrow E
  \end{equation}
  has rank $r - 1$ at all points of $X$. To see this, we use a dimension count. At each point $x \in X$, we consider the map of vector spaces $\HH^0(X, E) \rightarrow E(x)$, where $E(x)$ denotes the fiber of $E$ at~$x$. We wish to select a $\Gamma$ in the Grassmannian $\mathrm{Gr}(r - 1, \HH^0(X, E))$ which does not meet the kernel nontrivially. Since the kernel has codimension $r$, the linear spaces meeting it form a codimension-$2$ Schubert variety in $\mathrm{Gr}(r - 1, \HH^0(X, E))$. Varying $x$ over $X$, we see that the collection of $\Gamma$'s where \eqref{eval} has rank less than $r - 1$ has codimension at least $1$. Hence, there exists some $\Gamma$ where \eqref{eval} has rank $r - 1$.
  For such $\Gamma$, we obtain an exact sequence
  \[0 \rightarrow \Gamma \otimes \mathcal{O}_X  \rightarrow E \rightarrow \det(E) \rightarrow 0.\]
  This shows that $E$ occurs as an extension of a degree $d$ line bundle by a rank-$(r - 1)$ trivial bundle. Hence, $E$ is in the image of $V^{\semistable} \to \modulistack[\semistable]{X}{r}{d}$.

The proof for~$\modulistack[\semistable]{X}{r}{L}$ is similar, using an open subscheme of $\Ext_X^1(L,\mathcal{O}_X^{r-1})$ surjecting onto the stack.
\end{proof}

We summarize the results of this section as follows.

\begin{theorem}
  \label{theorem:properties}
  The moduli stacks $\modulistack[\semistable]{X}{r}{d}$ and $\modulistack[\semistable]{X}{r}{L}$ are
  irreducible
  algebraic stacks,
  of finite type over $k$
  with affine diagonal.
  They are moreover smooth, and their Zariski tangent spaces are given in Proposition \ref{proposition:smooth}.
\end{theorem}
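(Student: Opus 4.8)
The plan is to assemble the theorem from the results established throughout the section, since each asserted property has already been proven either for the ambient stack $\modulistack{X}{r}{d}$ or directly for the semistable locus. First I would use Proposition~\ref{prop:openness-of-semistability}: applied to the universal bundle $\universalbundle$ pulled back along a smooth presentation (equivalently, applied to every family over a $k$-scheme), it shows that the semistable locus is an \emph{open} substack $\modulistack[\semistable]{X}{r}{d} \hookrightarrow \modulistack{X}{r}{d}$. Openness immediately transports the structural properties of Proposition~\ref{proposition:affine-diagonal}, so $\modulistack[\semistable]{X}{r}{d}$ is an algebraic stack, locally of finite type over $k$, with affine diagonal. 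Since open immersions are smooth and induce isomorphisms on tangent spaces, Proposition~\ref{proposition:smooth} then yields smoothness together with the description of $\mathrm{T}_{\modulistack[\semistable]{X}{r}{d},[E]} \cong \Ext_X^1(E,E)$.

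To upgrade ``locally of finite type'' to ``of finite type,'' I would invoke quasicompactness, which is precisely what boundedness provides. By Proposition~\ref{proposition:boundedness-of-semistability}, every semistable bundle of rank $r$ and degree $d$ arises as a locally free quotient of a fixed bundle $V \otimes \mathcal{O}_X(-nx)$; restricting the universal quotient on the corresponding Quot scheme to the open locus where it is locally free and semistable produces a finite-type scheme surjecting onto $\modulistack[\semistable]{X}{r}{d}$. This surjection witnesses quasicompactness, which combined with local finiteness of type gives finite type over $k$ (quasiseparatedness being automatic from the affine diagonal). Irreducibility is then exactly Proposition~\ref{prop:irreducibility}.

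For the fixed-determinant stack I would argue by base change along the Cartesian square~\eqref{eq:cartesian square over det morphism}. Because semistability depends only on the underlying bundle, $\modulistack[\semistable]{X}{r}{L}$ is the preimage of $\modulistack[\semistable]{X}{r}{d}$ under the forgetful morphism $\modulistack{X}{r}{L}\to\modulistack{X}{r}{d}$, hence open in $\modulistack{X}{r}{L}$, and so inherits algebraicity, affine diagonal, and being locally of finite type. Smoothness and the kernel description~\eqref{equation:tangent-space-L-fixed} of its tangent space come from Proposition~\ref{proposition:smooth}, while finite type follows since $\modulistack[\semistable]{X}{r}{L}\to\Spec k$ is the base change of the finite-type morphism $\det$ along $[L]$; irreducibility is again part of Proposition~\ref{prop:irreducibility}.

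Since the genuine mathematical content lives in the cited propositions, the only real subtlety in this assembly is the finite-type claim: one must convert the set-theoretic boundedness of Proposition~\ref{proposition:boundedness-of-semistability} into an honest finite-type atlas, i.e.\ verify that the locally free, semistable locus inside the Quot scheme is open (using Propositions~\ref{prop:openness-of-semistability} and~\ref{proposition:affine-diagonal}) and that the induced map to the stack is surjective. I expect this bookkeeping, rather than any new idea, to be the main point requiring care.
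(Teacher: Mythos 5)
Your proposal is correct and follows essentially the same route as the paper: Theorem~\ref{theorem:properties} is explicitly a summary statement, and its proof is exactly the assembly you describe --- openness of semistability (Proposition~\ref{prop:openness-of-semistability}) to inherit algebraicity, affine diagonal, local finiteness of type, smoothness and tangent spaces from Propositions~\ref{proposition:affine-diagonal} and~\ref{proposition:smooth}, boundedness (Proposition~\ref{proposition:boundedness-of-semistability}) for quasicompactness, irreducibility from Proposition~\ref{prop:irreducibility}, and base change along the square~\eqref{eq:cartesian square over det morphism} for the fixed-determinant case. Your flagged subtlety (turning boundedness into a finite-type atlas via the locally free, semistable open locus of the Quot scheme) is indeed where the only real work lies, and it is handled just as you indicate.
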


\begin{remark}
  \label{remark:existence-stable}
  One can show that if the genus of $X$ is at least $2$, then for any integer~$r > 0$ and line bundle $L \in \Pic_X$ there exist stable vector bundles of rank $r$ and determinant $L$; see, e.g.,~\cite[Lemma~4.3]{MR0242185}. Hence the open substacks~$\modulistack[\stable]{X}{r}{d}\subseteq\modulistack[\semistable]{X}{r}{d}$ and~$\modulistack[\stable]{X}{r}{L}\subseteq\modulistack[\semistable]{X}{r}{L}$ of \emph{stable} bundles are nonempty and dense.
\end{remark}

\section{Good moduli spaces}
\label{section:good-moduli-space}
To study the properties of an algebraic stack, we often want to find the best possible approximation as a scheme or algebraic space. For a Deligne--Mumford stack such a procedure is given by the Keel--Mori theorem, producing a coarse moduli space. For an algebraic stack the first author introduced the notion of a good moduli space in \cite{MR3237451} and in \cite{1812.01128v3} criteria to verify its existence were established. The goal of this section is to briefly discuss how $\modulistack[\semistable]{X}{r}{d}$ fits into this general framework.

The following example explains why the stack~$\modulistack[\semistable]{X}{r}{d}$ is not Deligne--Mumford and the Keel--Mori theorem does not apply.
\begin{example}
  \label{example:Aut}
  The automorphism group of any semistable vector bundle contains a canonical copy of $\mathbb{G}_{\mathrm{m}}$ acting by scalars, and moreover~$\Aut(F)=\mathbb{G}_{\mathrm{m}}$ when $F$ is a stable bundle by Lemma~\ref{lemma:stable-endomorphisms-field}. In particular, the stack~$\modulistack[\semistable]{X}{r}{d}$ is not Deligne--Mumford, as the automorphism groups of all points are infinite.

  A prototypical example of a strictly semistable vector bundle of rank~2 and degree~0 is the polystable bundle~$F=\mathcal{O}_X\oplus\mathcal{O}_X$. Now~$\Aut(F)=\mathrm{GL}_2$, so the dimension of the automorphism group can jump up when going from the stable locus to the strictly semistable locus.

  The automorphism groups do not have to be reductive either. If $F$ fits in a non-split exact sequence
  \[ 0 \to \mathcal{O}_X \xrightarrow{i} F \xrightarrow{q} \mathcal{O}_X \to 0, \]
  then the map $(a, b) \mapsto a \cdot \id_F + \, a b \cdot i \circ q$ gives an an isomorphism $\mathbb{G}_{\mathrm{m}}~\times~\mathbb{G}_{\mathrm{a}}~\cong \Aut(F)$. 
  Such extensions are parameterized by the projective space \[\mathbb{P}\mathrm{Ext}_X^1(\mathcal{O}_X,\mathcal{O}_X)\cong\mathbb{P}^{g-1}\].
\end{example}

\begin{example}
  \label{example:to-dm-or-not-to-dm}
  For~$\modulistack[\semistable]{X}{r}{L}$ as defined in \eqref{eq:cartesian square over det morphism}
  one has generically a finite stabilizer, because for a \emph{stable} vector bundle with fixed determinant the automorphisms need to preserve the isomorphism, and hence are isomorphic to~$\mu_r$.

  If instead we took the fiber product along the closed substack~$\mathrm{B}\mathbb{G}_{\mathrm{m}}\hookrightarrow\stPic_X^d$ then the resulting closed substack would have the same stabilizers as~$\modulistack[\semistable]{X}{r}{d}$, and in particular they would never be finite.

  Observe that, with the first definition of~$\modulistack[\semistable]{X}{r}{L}$, if~$\gcd(r,\deg L)=1$ then there are no strictly semistable vector bundles, and~$\modulistack[\semistable]{X}{r}{L}$ is Deligne--Mumford. In this case~$\modulistack[\semistable]{X}{r}{L}$ is a $\mu_r$-gerbe over its coarse moduli space. With the second definition the stack is rather a~$\mathbb{G}_{\mathrm{m}}$-gerbe over this coarse moduli space.
\end{example}

\begin{example}
  \label{example:not-separated}
  The moduli stack~$\modulistack[\semistable]{X}{r}{d}$ is \emph{not} separated, despite its many other nice properties. Indeed, if it were separated, its diagonal would be proper. But the diagonal is affine, hence it would be finite, and so the stabilizers would be finite as they are the fibers of the diagonal. This contradicts the previous example.
  Likewise,  the moduli stack $\modulistack[\semistable]{X}{r}{L}$ is \emph{not} separated if the strictly semistable locus is nonempty.
\end{example}

\begin{definition}
  Let~$f\colon\mathscr{X}\to X$ be a quasi-compact and quasi-separated morphism over $k$, where~$\mathscr{X}$ is an algebraic stack and~$X$ is an algebraic space. We say that~$f$ is a \emph{good moduli space} if
  \begin{enumerate}
    \item $f$ is cohomologically affine, i.e.,~$f_*\colon\operatorname{Qcoh}\mathscr{X}\to\operatorname{Qcoh}X$ is exact;
    \item the morphism $\mathcal{O}_X\to f_*\mathcal{O}_{\mathscr{X}}$ is an isomorphism.
  \end{enumerate}
\end{definition}

We now summarize some of the fundamental properties of good moduli spaces that will be relevant for us. See \cite{MR3237451} for a complete statement.
\begin{theorem}\label{theorem:gms-properties}
  Let $\mathscr{X}$ be an algebraic stack of finite type over $k$, and let $f\colon\mathscr{X} \to X$ be a good moduli space.
  \begin{enumerate}
      \item The map $f$ is surjective, universally closed, and initial for maps to algebraic spaces.
      \item The space $X$ is of finite type over $k$.
      \item The map $f$ identifies two $k$-points $x, y\colon \Spec k \to \mathscr{X}$ if and only if the closures of $\{x\}$ and $\{y\}$ in $|\mathscr{X}|$ intersect. In particular, $f$ induces a bijection on closed points.
      \item The pullback $f^*$ induces an equivalence of categories between vector bundles on $X$ and those vector bundles $\mathcal{F}$ on $\mathscr{X}$ such that the stabilizer of any closed point $x\colon \Spec k \to \mathscr{X}$ acts trivially on $x^* \mathcal{F}$.
  \end{enumerate}
\end{theorem}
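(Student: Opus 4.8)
These statements are established in the foundational references on good moduli spaces (\cite{MR3237451}, with finiteness drawing on \cite{1812.01128v3}), and the plan is to indicate how each follows from the two defining conditions---cohomological affineness of $f$ and the isomorphism $\mathcal{O}_X \cong f_*\mathcal{O}_{\mathscr{X}}$---together with two external inputs: the \'etale-local structure theorem, asserting that \'etale-locally on $X$ the morphism $f$ has the form $[\Spec A/G] \to \Spec A^G$ for a linearly reductive group $G$ acting on an affine scheme, and the classical theorem on finite generation of the ring of invariants $A^G$. Much of the work lies in transporting classical invariant-theoretic facts across this local model.

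For (1), I would first show that the image of any closed substack $\mathscr{Z}\hookrightarrow\mathscr{X}$ is closed, by verifying that $f|_{\mathscr{Z}}$ is again a good moduli space onto a closed subspace of $X$: cohomological affineness is inherited, and exactness of $f_*$ identifies the structure sheaf of the scheme-theoretic image. Surjectivity is the case $\mathscr{Z}=\mathscr{X}$, since a nonempty open $U\subseteq X$ disjoint from the image would force $(f_*\mathcal{O}_{\mathscr{X}})|_U=0\neq\mathcal{O}_U$, contradicting the isomorphism. Promoting closedness to universal closedness requires stability of the good moduli space condition under arbitrary base change on $X$, which is immediate for flat base change and follows from the local model in general. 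Initiality reduces, by gluing, to an affine target $\Spec B$, where $\Hom(\mathscr{X},\Spec B)=\Hom(B,\Gamma(\mathscr{X},\mathcal{O}_{\mathscr{X}}))=\Hom(B,\Gamma(X,\mathcal{O}_X))=\Hom(X,\Spec B)$.

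Part (2) is then immediate from the local model: covering the quasi-compact stack $\mathscr{X}$ by finitely many charts $[\Spec A/G]$ and invoking finite generation of each $A^G$ exhibits $X$ as of finite type over $k$. For (3), the content is the classical orbit-closure dictionary of geometric invariant theory, read off the local model: two $k$-points of $\Spec A$ have equal image in $\Spec A^G$ precisely when their $G$-orbit closures meet, and each fiber contains a unique closed orbit. Translating orbit closures into closures in $|\mathscr{X}|$ gives the stated criterion; surjectivity on closed points uses (1), while injectivity uses that distinct closed points have disjoint closures and hence distinct images.

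Finally, full faithfulness of $f^*$ in (4) is formal: for vector bundles $V,W$ on $X$, the projection formula (valid since $f^*(V^\vee\otimes W)$ is locally free) and $f_*\mathcal{O}_{\mathscr{X}}\cong\mathcal{O}_X$ give $\Hom_{\mathscr{X}}(f^*V,f^*W)\cong\Hom_X(V,W)$. Essential surjectivity onto the indicated subcategory is the main obstacle, and the step genuinely requiring the trivial-stabilizer hypothesis. Given such an $\mathcal{F}$, I would set $E=f_*\mathcal{F}$ and aim to show $E$ is locally free with the counit $f^*E\to\mathcal{F}$ an isomorphism; both claims are local on $X$, so one passes to $[\Spec A/G]$ and views $\mathcal{F}$ as a finitely generated $G$-equivariant $A$-module. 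The delicate point is to use triviality of the stabilizer action on the fibers at the closed orbits, combined with linear reductivity of $G$ and a Nakayama-type argument, to conclude that $\mathcal{F}$ descends to an $A^G$-module. This representation-theoretic descent is where the local structure theorem is indispensable, and it constitutes the technical core of the whole statement; by contrast, parts (1)--(3) are, granting the local model, relatively formal consequences of the defining properties.
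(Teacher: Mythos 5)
You should know first how the paper itself treats this statement: it gives no proof at all. The theorem is an imported summary, and the paper's entire ``proof'' is the pointer to \cite{MR3237451} (parts (i) and (iii) are among Alper's basic properties of good moduli spaces, initiality for maps to algebraic spaces is his universality theorem, and (iv) is his descent theorem for vector bundles; the finite-typeness in (ii) in this generality really rests on the \'etale-local structure theory that also underlies \cite{1812.01128v3}). So your proposal does work the paper deliberately outsources, and it should be compared with those original proofs. There your route is genuinely different: you run everything through the local model $[\Spec A/G]\to\Spec A^G$, whereas Alper's proofs of (i), (iii), (iv) are direct consequences of cohomological affineness, with no local model (which was not available in 2013). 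For instance, the ``only if'' half of (iii) follows by applying exactness of $f_*$ to the surjection $\mathcal{O}_{\mathscr{X}}\twoheadrightarrow\mathcal{O}_{\mathscr{Z}_1}\oplus\mathcal{O}_{\mathscr{Z}_2}$ for disjoint closed substacks, which shows their images are disjoint; and (iv) is proved by exactly the Nakayama-plus-linear-reductivity mechanism you describe, but executed on the stack itself rather than on a chart. Only (ii) genuinely needs your two external inputs (local structure plus Hilbert-type finite generation of invariants). The trade-off: your approach is uniform and conceptually transparent, but the \'etale-local structure theorem requires hypotheses (affine diagonal, or at least affine stabilizers and separation conditions) that the statement as given does not impose, whereas the direct arguments establish (i), (iii), (iv) in that full generality.

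One internal wrinkle in your part (1): as written, it is circular. You deduce closedness of the image of a closed substack $\mathscr{Z}$ from ``$f|_{\mathscr{Z}}$ is a good moduli space onto the scheme-theoretic image $Y$,'' but knowing it is a good moduli space only yields, via your open-complement argument, that the set-theoretic image is \emph{dense} in $Y$; concluding that it fills up $Y$ is precisely surjectivity of a good moduli space map, which you in turn derive from closedness. The standard repair uses an ingredient you already mention for universal closedness: establish stability of good moduli spaces under arbitrary base change first, then obtain surjectivity pointwise---the fiber over any point is a good moduli space over its residue field and cannot be empty, since otherwise the pushforward of its structure sheaf would be zero rather than the residue field---and only then deduce that images of closed substacks are closed. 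With that reordering, your outline is correct and matches the known proofs.
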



The following existence theorem for good moduli spaces is a specialized version of \cite[Theorem~A]{1812.01128v3}, in the form in which we will apply it. The terms ``S-complete'' and ``$\Theta$-reductive'' in the statement refer to certain valuative criteria that we will discuss below.
\begin{theorem}
  \label{theorem:gms-existence}
  Let~$\mathscr{X}$ be an algebraic stack of finite type over~$k$ with affine diagonal. There exists a good moduli space~$\mathscr{X}\to X$ where~$X$ is a separated algebraic space over $k$ if and only if
  \begin{enumerate}
    \item $\mathscr{X}$ is S-complete, and
    \item $\mathscr{X}$ is $\Theta$-reductive.
  \end{enumerate}
  Moreover, $X$ is proper if and only if $\mathscr{X}$ is universally closed, or equivalently satisfies the existence part of the valuative criterion for properness.
\end{theorem}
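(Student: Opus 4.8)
The plan is to read this as the specialization of \cite[Theorem~A]{1812.01128v3} to the base $\Spec k$, so that almost all of the work is matching hypotheses rather than reproving a deep existence result. First I would record that $k$ is a field, so $\Spec k$ is noetherian, and that the hypothesis ``$\mathscr{X}$ of finite type over $k$ with affine diagonal'' is exactly the input required by \cite[Theorem~A]{1812.01128v3} (affineness of the diagonal in particular makes $\mathscr{X}$ quasi-compact and quasi-separated). Under this dictionary the first biconditional is verbatim that theorem: conditions (1) and (2) are the two valuative criteria appearing there, and the conclusion is the existence of a good moduli space $\mathscr{X}\to X$ with $X$ a separated algebraic space.

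I would then separate the two directions. The implication ``separated good moduli space $\Rightarrow$ S-complete and $\Theta$-reductive'' is the formal one: using that a good moduli space morphism $f\colon\mathscr{X}\to X$ has the existence-of-limits behavior recorded in Theorem~\ref{theorem:gms-properties} together with separatedness of $X$ (uniqueness of limits), any map from a punctured test stack into $\mathscr{X}$ extends uniquely, which is precisely what the two valuative criteria demand. The converse is the substantive content, and here I would simply invoke \cite{1812.01128v3}: its proof presents $\mathscr{X}$ étale-locally around each closed point as a quotient stack $[\Spec A/G]$ with $G$ reductive, produces good moduli spaces locally by invariant-theoretic means, and glues them, with S-completeness and $\Theta$-reductivity furnishing exactly the separatedness and gluing data needed. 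I would not reproduce this argument.

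The ``moreover'' clause admits a short self-contained proof from Theorem~\ref{theorem:gms-properties}. That result gives that $f\colon\mathscr{X}\to X$ is surjective and universally closed and that $X$ is of finite type over $k$; as $X$ is already separated, $X$ is proper over $k$ if and only if $X\to\Spec k$ is universally closed. If $\mathscr{X}\to\Spec k$ is universally closed, factor it as $\mathscr{X}\xrightarrow{f}X\xrightarrow{h}\Spec k$; since $f$ is surjective and $h\circ f$ is universally closed, the elementary fact that such a factorization forces $h$ to be universally closed gives that $X\to\Spec k$ is universally closed. Conversely, if $h$ is universally closed then $h\circ f$ is universally closed as a composite of universally closed morphisms, so $\mathscr{X}\to\Spec k$ is universally closed. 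Finally, the equivalence of universal closedness of $\mathscr{X}\to\Spec k$ with the existence part of the valuative criterion for properness is the valuative criterion for universal closedness, applicable since $\mathscr{X}\to\Spec k$ is quasi-compact and quasi-separated.

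The main obstacle is located entirely in the reverse direction of the first biconditional, which is the theorem of \cite{1812.01128v3} itself; the genuine work left to us is only the bookkeeping for the properness addendum, which is elementary once Theorem~\ref{theorem:gms-properties} is available.
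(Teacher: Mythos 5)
Your proposal takes essentially the same route as the paper: the paper states this theorem purely as a specialization of \cite[Theorem~A]{1812.01128v3} (including the properness addendum) and gives no independent proof, which is exactly your treatment of the main biconditional. Your extra self-contained argument for the ``moreover'' clause is correct---descending universal closedness of $\mathscr{X}\to\Spec k$ along the surjective map $f$ and composing in the other direction, then using the valuative criterion for the quasi-compact, quasi-separated morphism $\mathscr{X}\to\Spec k$---and is consistent with how the paper itself later invokes the Stacks Project tags \splink{0CLW} and \splink{0CLX} for that last equivalence.
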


\subsection{Langton's semistable reduction theorem}
We first discuss the last part of Theorem \ref{theorem:gms-existence}. Since the stacks $\modulistack[\semistable]{X}{r}{d}$ and $\modulistack[\semistable]{X}{r}{L}$ are quasicompact and have affine diagonal, hence are quasiseparated, it follows from \cite[Tags \splink{0CLW} and \splink{0CLX}]{stacks-project} that universal closedness is equivalent to the existence part of the valuative criterion for properness.


The valuative criterion states the following. If~$R$ is a discrete valuation ring~$R$ with residue field~$\kappa$ and fraction field~$K$, then for every 2-commutative diagram
\[
  \begin{tikzcd}
    \Spec K \arrow[r] \arrow[d, hook] & \mathscr{X} \arrow[d] \\
    \Spec R \arrow[r] & \Spec k
  \end{tikzcd}
\]
there exists a field extension~$K'/K$ and a discrete valuation ring~$R'\subseteq K'$ dominating~$R$, such that there exists a morphism~$\Spec R'\to\mathscr{X}$ making the diagram
\[
  \begin{tikzcd}
    \Spec K' \arrow[r] \arrow[d, hook] & \Spec K \arrow[r] \arrow[d, hook] & \mathscr{X} \arrow[d] \\
    \Spec R' \arrow[r] \arrow[rru, dashed] & \Spec R \arrow[r]  & \Spec k
  \end{tikzcd}
\]
2-commutative.

It turns out that for~$\modulistack[\semistable]{X}{r}{d}$ and $\modulistack[\semistable]{X}{r}{L}$ it is not necessary to consider field extensions of~$K$, by the following classical theorem of Langton \cite{MR0364255}.
\begin{theorem}\label{theorem:Langton}
  Let $E$ be a semistable vector bundle of rank $r$ and degree~$d$ on $X_K$. There exists a vector bundle $\mathcal{E}$ on $X_R$ such that $\mathcal{E}_K \cong E$ and $\mathcal{E}_\kappa$ is semistable. Thus, the stacks $\modulistack[\semistable]{X}{r}{d}$ and $\modulistack[\semistable]{X}{r}{L}$ are universally closed.
\end{theorem}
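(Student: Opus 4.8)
The plan is to follow Langton's method of \emph{elementary modifications}, producing the extension over $R$ itself rather than over a ramified cover, which is exactly why no field extension $K'/K$ is needed in the valuative criterion. Write $X_R = X\times_k\Spec R$, a regular two-dimensional scheme whose special fibre $X_\kappa$ is the principal divisor cut out by a uniformizer $\pi$ of $R$. First I would extend $E$ to \emph{some} vector bundle $\mathcal{E}^{(0)}$ on $X_R$ with $\mathcal{E}^{(0)}_K\cong E$; this is possible by extending the coherent sheaf and replacing it by a locally free extension, using that $X_R$ is regular of dimension two. If $\mathcal{E}^{(0)}_\kappa$ is semistable we are done. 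Otherwise let $B^{(0)}\subset\mathcal{E}^{(0)}_\kappa$ be the maximal destabilizing subsheaf, i.e.\ the first step of the Harder--Narasimhan filtration of the special fibre, and set $\mathcal{E}^{(1)} = \ker\big(\mathcal{E}^{(0)}\twoheadrightarrow \mathcal{E}^{(0)}_\kappa/B^{(0)}\big)$. Since $B^{(0)}$ is saturated it is a subbundle on the smooth curve $X_\kappa$, so this modification is again locally free on $X_R$ and still restricts to $E$ over $K$. Iterating produces a chain $\mathcal{E}^{(0)}\supset\mathcal{E}^{(1)}\supset\cdots$ with $\pi\mathcal{E}^{(n)}\subset\mathcal{E}^{(n+1)}$, all of rank $r$ and degree $d$, and a short bookkeeping computation shows that with $Q^{(n)} = \mathcal{E}^{(n)}_\kappa/B^{(n)}$ there is an exact sequence $0\to Q^{(n)}\to\mathcal{E}^{(n+1)}_\kappa\to B^{(n)}\to 0$, so each modification \emph{swaps} the destabilizing sub and its quotient on the special fibre.

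The heart of the argument is to show that this process terminates. Writing $\mu_{\max}(\mathcal{F})$ for the slope of the maximal destabilizing subsheaf of a bundle $\mathcal{F}$ on $X_\kappa$, I would first establish the monotonicity $\mu_{\max}(\mathcal{E}^{(n+1)}_\kappa)\le\mu_{\max}(\mathcal{E}^{(n)}_\kappa)$. Composing the inclusion of the new destabilizer $B^{(n+1)}$ with the surjection $\mathcal{E}^{(n+1)}_\kappa\to B^{(n)}$, the kernel lands in $Q^{(n)}$, whose slopes all lie strictly below $\mu_{\max}(\mathcal{E}^{(n)}_\kappa)$, while the image sits in the semistable bundle $B^{(n)}$ of exactly that slope; the slope of an extension lying between those of its sub and quotient then bounds $\mu(B^{(n+1)})$ above by $\mu(B^{(n)})$. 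Since $\mu_{\max}(\mathcal{E}^{(n)}_\kappa)\ge d/r$ always, the sequence is non-increasing and bounded below, hence eventually constant.

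The main obstacle is to rule out the scenario in which $\mu_{\max}$ stabilizes at a value strictly greater than $d/r$ while the special fibres stay unstable forever. Here one inspects the equality case of the slope estimate: when $\mu_{\max}$ is constant the maps $B^{(n+1)}\to B^{(n)}$ are forced to be injective of equal slope, so the ranks of the $B^{(n)}$ are non-increasing and stabilize, after which these injections of bundles of equal rank and degree on the smooth projective curve $X_\kappa$ become isomorphisms. This $\pi$-adic stabilization of the destabilizing subsheaves is precisely what lets one descend them: the compatible system of isomorphic $B^{(n)}$ assembles into a saturated subsheaf $\mathcal{B}\subset\mathcal{E}^{(n)}$ over $R$ whose generic fibre $\mathcal{B}_K\subset E$ has slope $\mu_{\max}>d/r$, contradicting the semistability of $E$. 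I expect this descent from the $\pi$-adically stabilized special-fibre data to a subsheaf over $R$ to be the delicate point. It follows that some $\mathcal{E}^{(n)}_\kappa$ is semistable, giving the desired $\mathcal{E}$.

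Finally, to deduce universal closedness I would note that $\mathcal{E}$ defines a morphism $\Spec R\to\modulistack[\semistable]{X}{r}{d}$ filling in the valuative diagram, which by the quasiseparatedness recorded above yields the existence part of the valuative criterion and hence universal closedness. For the fixed-determinant stack $\modulistack[\semistable]{X}{r}{L}$ the same $\mathcal{E}$ works: the line bundle $\det\mathcal{E}$ restricts to $L$ over $K$, and any line bundle on $X_R$ that is trivial on $X_K$ has a rational section whose divisor is supported on the irreducible principal divisor $X_\kappa$, hence is trivial; thus $\det\mathcal{E}\cong L_R$ and the generic trivialization extends, placing $(\mathcal{E},\varphi)$ in $\modulistack[\semistable]{X}{r}{L}$ and proving universal closedness in this case as well.
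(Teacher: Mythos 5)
The paper does not actually reprove this statement: it quotes Langton's theorem from \cite{MR0364255} and only records the deduction of universal closedness. Your sketch of the unfixed-determinant part is a faithful reconstruction of Langton's own argument (reflexive-hull extension, elementary modification along the maximal destabilizing subsheaf, the exact sequence $0\to Q^{(n)}\to\mathcal{E}^{(n+1)}_\kappa\to B^{(n)}\to 0$, monotonicity and stabilization of $\mu_{\max}$ and then of the $B^{(n)}$), and the step you flag as delicate is indeed the crux. One correction there: the stabilized $B^{(n)}$ assemble not over $R$ but over the $\pi$-adic completion $\hat R$ (the compatible system $\mathcal{E}^{(n)}/\pi^n\mathcal{E}^{(0)}$ defines a subsheaf of $\mathcal{E}^{(0)}\otimes_R\hat R$), so the contradiction one reaches is that $E\otimes_K\hat K$ is unstable; one then descends instability along $K\to\hat K$ using the uniqueness of the Harder--Narasimhan filtration. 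With that addition this half is correct.

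The fixed-determinant paragraph, however, has a genuine gap, and the statement you prove there is in fact false. From the triviality of $\det\mathcal{E}\otimes L_R^\vee$ on $X_K$ you conclude both that it is trivial on $X_R$ (correct) and that ``the generic trivialization extends'' (incorrect): a trivialization over $X_K$ is a rational section with divisor $mX_\kappa$ for some $m\in\mathbb{Z}$, and it extends over $X_R$ only when $m=0$; in general $\varphi$ extends to an isomorphism $\det\mathcal{E}\cong L_R$ only after multiplication by $\pi^{-m}$. One can try to absorb this factor into the identification $\mathcal{E}_K\cong E$, but when $E$ is stable, $\Aut(E)=K^{\times}$ acts on $\det E$ through $r$-th powers, so this works only if $r\mid m$. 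The class of $m$ modulo $r$ is a genuine obstruction: for a stable bundle $E_0$ over $k$ with $\varphi_0\colon\det E_0\cong L$, the $K$-point $(E_0\otimes_k K,\ \pi\cdot\varphi_0)$ of $\modulistack[\semistable]{X}{r}{L}$ admits no extension over $\Spec R$ when $r\geq 2$: since the good moduli space is separated and $E_0$ is stable, any extension would be isomorphic to $(E_0\otimes_k R,\ c\,\varphi_0)$ with $c\in R^{\times}$, and $\pi/c$ is never an $r$-th power in $K^{\times}$. So, unlike for $\modulistack[\semistable]{X}{r}{d}$, field extensions cannot be avoided in the fixed-determinant case. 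The theorem's conclusion -- universal closedness -- survives because its valuative criterion (via the tags the paper cites) permits extensions of $K$: pass to $R'=R[\pi']/((\pi')^r-\pi)$; over $K'$ the discrepancy $\pi^m=((\pi')^m)^r$ becomes an $r$-th power, so rescaling the identification $\mathcal{E}_{K'}\cong E_{K'}$ by $(\pi')^{-m}$ makes $\varphi$ extend, and $(\mathcal{E}_{R'},\psi)$ is the required lift. Your last paragraph should be replaced by this argument.
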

According to Example \ref{example:not-separated}, the stack $\modulistack[\semistable]{X}{r}{d}$ is not separated, and there will in general be many extensions of $E$ to $X_R$. However, for any two extensions, the central fibers will be \emph{S-equivalent}, as we will see in Theorem \ref{theorem:good-moduli-space}, justifying the notion of S-equivalence. In particular, over the stable locus $\modulistack[\stable]{X}{r}{d}$ any two extensions are isomorphic, although not uniquely.

\subsection{$\Theta$-reductivity and S-completeness}
We now turn to the more contemporary ingredients of Theorem \ref{theorem:gms-existence}. Let~$R$ denote a discrete valuation ring with field of fractions~$K$ and residue field~$k$, and let~$\pi$ be a uniformizer. We define the quotient stacks
\begin{itemize}
  \item $\Theta_R=[\Spec R[t]/\mathbb{G}_{\mathrm{m}}]$ where the action has weight~$-1$,
  \item $\overline{\mathrm{ST}}_R\colonequals[(\Spec R[s,t]/(st-\pi))/\mathbb{G}_{\mathrm{m}}]$ where the action has weight~$1$ on~$s$ and~$-1$ on~$t$.
\end{itemize}
Both $\Theta$-reductivity and S-completeness are lifting criteria for \emph{codimension-2} punctures in $\Theta_R$ and $\overline{\rm ST}_R$. The notion of $\Theta$-reductivity is encoded by the lifting criterion in the left diagram, whereas S-completeness is the lifting criterion in the right diagram:
\begin{equation}
  \begin{tikzcd}
    \Theta_R\setminus\{0\} \arrow[r] \arrow[d, hook] & \mathscr{X} \arrow[d] \\
    \Theta_R \arrow[r] \arrow[ru, dashed, "\exists!"] & \Spec k
  \end{tikzcd}
  \quad
  \begin{tikzcd}
    \overline{\mathrm{ST}}_R\setminus\{0\} \arrow[r] \arrow[d, hook] & \mathscr{X} \arrow[d] \\
    \overline{\mathrm{ST}}_R \arrow[r] \arrow[ru, dashed, "\exists!"] & \Spec k
  \end{tikzcd}
\end{equation}
The notion of $\Theta$-reductivity was introduced in \cite{1411.0627v4} and that of S-completeness in \cite[Section~3.5]{1812.01128v3}. If~$\mathscr{X}$ is an algebraic stack with affine diagonal, then in either criterion, a lift is automatically unique by \cite[Proposition~3.17]{1812.01128v3}.

Each of the two lifting criteria encodes the extension of a certain type of filtration over the puncture, which we will now describe.

\subsubsection{$\Theta$-reductivity}
We let $0 \in \Theta_R$ be the unique closed point defined by the vanishing of $t$ and the uniformizer $\pi \in R$.  Observe that $\Theta_R \setminus 0 = \Theta_K \cup_{\Spec K} \Spec R$.
We have the following two cartesian diagrams which describe the geometry of $\Theta_R$:
\begin{equation}
  \label{E:ThetaR-schematic}
  \begin{tikzcd}
    & \Spec R \arrow[rd, hook, "t\neq0"] & & \mathrm{B}_R\mathbb{G}_{\mathrm{m}} \arrow[ld, hook', swap, "t=0"] \\
    \Spec K \arrow[ru, hook] \arrow[rd, hook] & & \Theta_R & & \mathrm{B}_k\mathbb{G}_{\mathrm{m}} \arrow[ld, hook'] \arrow[lu, hook'] \\
    & \Theta_K \arrow[ru, "\pi\neq0", hook] & & \Theta_k \arrow[lu, "\pi=0", swap, hook']
  \end{tikzcd}
\end{equation}
Here the maps on the left are open embeddings and on the right are closed embeddings.



A morphism $\rho\colon\Theta \to \underline{{\rm Coh}}_X$ corresponds to the data of a coherent sheaf $E$ on $X$ and a filtration $0=E_0 \subset E_1 \subset \cdots \subset E_n = E$.  Under this correspondence, $\rho(1) = E$ and $\rho(0) = \bigoplus_i E_{i+1}/E_i$ is the associated graded.  The map $\rho\colon \Theta \to \underline{{\rm Coh}}_X$ factors through $\modulistack{X}{r}{d}$ (resp. $ \modulistack[\semistable]{X}{r}{d}$) if and only if $E$ is a vector bundle (resp.~semistable vector bundle) of rank $r$ and degree $d$ and each factor $E_{i+1}/E_i$ is a vector bundle (resp.~semistable vector bundle of slope $\mu = d/r$).
There are analogous descriptions for morphisms from  $\Theta_A$ for a $k$-algebra $A$.

Under this correspondence, the $\Theta$-reductivity of these stacks translates into a valuative criterion for filtrations.  For $\modulistack[\semistable]{X}{r}{d}$, $\Theta$-reductivity translates into the following: for every discrete valuation ring $R$ with fraction field $K$ and semistable vector bundle $E$ on $X_R$ of rank $r$ and degree $d$, any filtration $0=G_0 \subset G_1 \subset \cdots \subset G_n = E_K$ of the generic fiber where each factor $G_{i+1}/G_i$ is semistable of slope $\mu = d/r$ extends to a filtration $0=E_0 \subset E_1 \subset \cdots \subset E_n = E$ where each factor $E_{i+1}/E_i$ is a family of semistable vector bundles on $X$ over $R$. Note that the factors $G_{i+1}/G_i$ are semistable of slope $d/r$ for all $i$ if and only if $G_i$ is semistable of slope $d/r$ for all $i$ if and only if $\mu(G_i)  = d/r$ for all $i$.

\begin{proposition}
  \label{proposition:Theta-reductive}
  The stacks $\underline{{\rm Coh}}_X$, $\modulistack[\semistable]{X}{r}{d}$, and $\modulistack[\semistable]{X}{r}{L}$ are $\Theta$-reductive.
\end{proposition}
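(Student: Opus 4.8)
\noindent
The plan is to treat $\underline{{\rm Coh}}_X$ first and then bootstrap to the two semistable loci. Recall that a lift $\Theta_R \to \underline{{\rm Coh}}_X$ is exactly a filtration of a coherent sheaf on $X_R$ with $R$-flat subquotients, and that such lifts are automatically unique since $\underline{{\rm Coh}}_X$ has affine diagonal (by the cited Proposition~3.17). A map $\Theta_R \setminus 0 \to \underline{{\rm Coh}}_X$ supplies a coherent sheaf $\mathcal{E}$ on $X_R$, flat over $R$, together with a filtration $0 = G_0 \subset \cdots \subset G_n = \mathcal{E}_K$ of its generic fibre. The only reasonable candidate for the extension is the saturation: set $\mathcal{E}_i \colonequals \ker(\mathcal{E} \to \mathcal{E}_K / G_i)$, using that $\mathcal{E} \hookrightarrow \mathcal{E}_K$ because $\mathcal{E}$ is $R$-torsion-free. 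First I would check this really is a lift: each $\mathcal{E}_i$ is $R$-flat (a subsheaf of the $R$-flat $\mathcal{E}$) with $(\mathcal{E}_i)_K = G_i$, and each subquotient $\mathcal{E}_i/\mathcal{E}_{i-1}$ embeds into the $K$-vector space $G_i/G_{i-1}$, hence is $R$-torsion-free and therefore $R$-flat. This settles $\underline{{\rm Coh}}_X$.

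\noindent
For $\modulistack[\semistable]{X}{r}{d}$ the same saturation is the candidate lift --- it is unique, so it must coincide with the one above --- and the remaining task is to upgrade $R$-flatness to the assertion that each $\mathcal{E}_i/\mathcal{E}_{i-1}$ is a \emph{family of semistable bundles of slope $\mu = d/r$}. Since $\mathcal{E}_i/\mathcal{E}_{i-1}$ is the saturation of $G_i/G_{i-1}$ inside $\mathcal{E}/\mathcal{E}_{i-1}$, I would reduce by induction on the length to a single statement: if $\mathcal{E}$ is a vector bundle on $X_R$ with semistable fibres of slope $\mu$ and $G \subset \mathcal{E}_K$ satisfies $\mu(G) = \mu$ (equivalently, $G$ and $\mathcal{E}_K/G$ are semistable of slope $\mu$), then its saturation $\mathcal{F} \subset \mathcal{E}$ and the quotient $\mathcal{E}/\mathcal{F}$ are vector bundles on $X_R$ whose special fibres $\mathcal{F}_k$ and $(\mathcal{E}/\mathcal{F})_k$ are semistable of slope $\mu$.

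\noindent
To prove this single-step lemma, tensor $0 \to \mathcal{F} \to \mathcal{E} \to \mathcal{E}/\mathcal{F} \to 0$ with the residue field; as $\mathcal{E}/\mathcal{F}$ is $R$-flat the relevant $\operatorname{Tor}_1^R$ vanishes, so $\mathcal{F}_k \hookrightarrow \mathcal{E}_k$. On the smooth curve $X = X_k$ a subsheaf of a vector bundle is locally free, so $\mathcal{F}_k$ is locally free; being $R$-flat with locally free fibres, $\mathcal{F}$ is a subbundle. Flatness gives $\deg \mathcal{F}_k = \deg \mathcal{F}_K = \deg G$, whence $\mu(\mathcal{F}_k) = \mu$. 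Now semistability of $\mathcal{E}_k$ does all the work: every subsheaf of $\mathcal{F}_k \subset \mathcal{E}_k$ has slope $\leq \mu$, so $\mathcal{F}_k$ is semistable; and if $(\mathcal{E}/\mathcal{F})_k = \mathcal{E}_k/\mathcal{F}_k$ had torsion, its preimage in $\mathcal{E}_k$ would be a subsheaf of the same rank as $\mathcal{F}_k$ but strictly larger degree, hence of slope $> \mu$, contradicting semistability; thus $(\mathcal{E}/\mathcal{F})_k$ is torsion-free, hence locally free, of slope $\mu$, so that $\mathcal{E}/\mathcal{F}$ is itself a vector bundle on $X_R$. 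It is moreover semistable because each of its quotients is a quotient of $\mathcal{E}_k$ and so has slope $\geq \mu$. This slope bookkeeping in the special fibre is the crux of the argument and the step I expect to require the most care.

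\noindent
Finally, for $\modulistack[\semistable]{X}{r}{L}$ I would deduce $\Theta$-reductivity from the case of $\modulistack[\semistable]{X}{r}{d}$ using the Cartesian square \eqref{eq:cartesian square over det morphism}. The underlying family already lifts to $\modulistack[\semistable]{X}{r}{d}$ by the above, and it remains to extend the determinant trivialisation. Concretely, $\varphi$ is a section of an $\Isom$-scheme which is affine over the regular (hence $S_2$) stack $\Theta_R$, so it extends over the codimension-$2$ point $0$; equivalently, since $\stPic_X^d$ has affine diagonal, the uniqueness of lifts forces the two maps $\Theta_R \to \stPic_X^d$ obtained from $\det$ of our lift and from the constant $[L]$ to agree, their given identification over $\Theta_R \setminus 0$ extending uniquely. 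Hence the lift factors through $\modulistack[\semistable]{X}{r}{L}$, completing the proof.
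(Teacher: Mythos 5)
Your proof is correct, and while it shares the paper's overall skeleton (extend the filtration to the unique flat model, then check slopes on the special fibre), it differs in two ways worth noting. First, for the existence of the extension over $\underline{{\rm Coh}}_X$, the paper proceeds by descending induction and invokes properness of the relative Quot scheme over $R$ to produce the unique flat subsheaf at each step; you instead construct all the $\mathcal{E}_i = \ker\big(\mathcal{E} \to \mathcal{E}_K/G_i\big)$ at once by saturation, using only that over a DVR a coherent sheaf is flat if and only if it is torsion-free. The two constructions yield the same filtration (by the uniqueness both arguments establish), but yours is more elementary and self-contained, while the paper's appeal to Quot is the standard citable mechanism that requires no hands-on verification. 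Second, and more substantively, you flesh out two points the paper compresses: (a) your single-step lemma verifies not only that the special fibres $(\mathcal{E}_i)_k$ are semistable of slope $\mu$ (which is all the paper argues, leaning on its preceding remark that subsheaves of slope $\mu$ have semistable subquotients), but also that the quotients are torsion-free, hence locally free, so that each factor genuinely is a \emph{family of vector bundles} over $R$; and (b) where the paper disposes of $\modulistack[\semistable]{X}{r}{L}$ with ``the same argument shows,'' you give an actual deduction from the degree-$d$ case via the Cartesian square \eqref{eq:cartesian square over det morphism}, extending the determinant trivialization across the codimension-$2$ puncture using the affine diagonal of $\stPic_X^d$ (equivalently the full faithfulness behind the cited Proposition~3.17, or the Hartogs-type argument for sections of affine morphisms over the regular stack $\Theta_R$). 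That last step is a genuine completion of something the paper only gestures at, and your formulation makes clear exactly which property of $\stPic_X^d$ is being used.
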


\begin{proof}
  First consider $\underline{{\rm Coh}}_X$. Let $E$ be an $R$-flat coherent sheaf on $X_R$ and let $0=G_0 \subset G_1 \subset \cdots \subset G_n = E_K$ be a filtration of the generic fiber. By descending induction starting with $j = n$, suppose that we have constructed a filtration $E_j \subset E_{j+1} \subset \cdots \subset E_n = E$ extending $G_\bullet$. Since the relative Quot scheme of quotients of $E_j$ with the same Hilbert polynomial as $G_j/G_{j-1}$ is proper over $R$, there is a unique subsheaf $E_{j-1}\subset E_j$ with $(E_{j-1})_K = G_{j-1}$ and $E_j/E_{j-1}$ flat over $R$. This gives the next step in the filtration $E_\bullet$.

  To see that $\modulistack[\semistable]{X}{r}{d}$ is $\Theta$-reductive, suppose that $E$ is a family of semistable vector bundles on $X_R$ and $G_\bullet$ is a filtration of $E_K$ such that the sheaves $G_i$ are semistable vector bundles with $\mu(G_i) = d/r$. By the previous paragraph, $G_\bullet$ extends to $E_\bullet$ over $R$, and we must show that in the sheaves $(E_i)_k$ are semistable as well. Since each $E_i$ is flat over $R$, we have $\mu((E_i)_k) = \mu(G_i) = d/r$, and so $(E_i)_k$ is a subsheaf of the semistable vector bundle $E_k$ of the same slope, hence a semistable vector bundle itself. The same argument shows that $\modulistack[\semistable]{X}{r}{L}$ is $\Theta$-reductive.


\end{proof}

\subsubsection{S-completeness}
The stack $\overline{\mathrm{ST}}_R$ can be viewed as a local model of the quotient $[\mathbb{A}^2/\mathbb{G}_{\mathrm{m}}]$ where $\mathbb{A}^2$ has coordinates $s$ and $t$ with weights $1$ and $-1$; indeed, $\overline{\mathrm{ST}}_R$ is the base change of the good moduli space $[\mathbb{A}^2/\mathbb{G}_m] \to \Spec k[st]$ along $\Spec R \to  \Spec k[st]$ defined by $st \mapsto \pi$.

We let $0 \in \overline{\mathrm{ST}}_R$ be the unique closed point defined by the vanishing of $s$ and $t$.  Observe that $\overline{\mathrm{ST}}_R \setminus 0$ is the non-separated union $\Spec R \cup_{\Spec K} \Spec R$.
We have the following two cartesian diagrams which describe the geometry of $\overline{\mathrm{ST}}_R$:
\begin{equation}
  \label{E:ST-schematic}
  \begin{tikzcd}
    & \Spec R \arrow[rd, hook, "s\neq0"] & & \Theta_k \arrow[ld, "s=0", hook', swap] \\
    \Spec K \arrow[ru, hook] \arrow[rd, hook] & & \overline{\rm ST}_R & & \mathrm{B}_k\mathbb{G}_{\mathrm{m}} \arrow[lu, hook'] \arrow[ld, hook'] \\
    & \Spec R \arrow[ru, hook, "t\neq0"] & & \Theta_k \arrow[lu, "t=0", hook', swap]
  \end{tikzcd}.
\end{equation}
Here the maps on the left are open embeddings and those on the right are closed embeddings.




\begin{remark} \label{R:reductive}
  If $G$ is a linear algebraic group over $k$, then $\mathrm{B}G$ is S-complete if and only if $G$ is reductive (see \cite[Proposition~3.45 and Remark~3.46]{1812.01128v3}).  Moreover, as S-completeness is preserved under closed substacks, it follows that every closed point (corresponding to a polystable object) in an S-complete algebraic stack with affine diagonal has a \emph{reductive stabilizer}.

  We will see in Lemma \ref{lemma:topology-of-semistable-stack} below that the closed points of $\modulistack[\semistable]{X}{r}{d}$ correspond to polystable vector bundles, so although some $k$-points of $\modulistack[\semistable]{X}{r}{d}$ may have nonreductive stabilizers as in Example \ref{example:Aut}, such points necessarily correspond to non-polystable bundles.
\end{remark}

\begin{proposition}
  \label{proposition:S-complete}
  The stacks $\underline{{\rm Coh}}_X$, $\modulistack[\semistable]{X}{r}{d}$, and $\modulistack[\semistable]{X}{r}{L}$ are S-complete.
\end{proposition}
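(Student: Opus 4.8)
The plan is to establish S-completeness first for $\underline{{\rm Coh}}_X$ by an explicit lattice construction, and then to deduce the two semistable cases by checking that this fill-in lands in the relevant open substack. First I would unwind the lifting problem using the diagram \eqref{E:ST-schematic}. Since $\overline{\mathrm{ST}}_R\setminus 0=\Spec R\cup_{\Spec K}\Spec R$, a morphism $\overline{\mathrm{ST}}_R\setminus 0\to\underline{{\rm Coh}}_X$ is the data of two $R$-flat coherent sheaves $\mathcal{E}_1,\mathcal{E}_2$ on $X_R$ together with an isomorphism $(\mathcal{E}_1)_K\cong(\mathcal{E}_2)_K$ of their generic fibres. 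Fixing this isomorphism, I regard $\mathcal{E}_1$ and $\mathcal{E}_2$ as lattices inside a common generic fibre $E_K$. A lift to $\overline{\mathrm{ST}}_R$ amounts to a $\mathbb{G}_{\mathrm{m}}$-equivariant coherent sheaf on $X\times\Spec R[s,t]/(st-\pi)$, flat over $R[s,t]/(st-\pi)$ and restricting to $\mathcal{E}_1$ and $\mathcal{E}_2$ on the two charts; equivalently, a suitable graded $R[s,t]/(st-\pi)$-module. Any such lift is automatically unique by \cite[Proposition~3.17]{1812.01128v3}, as all our stacks have affine diagonal, so only existence must be shown.

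For $\underline{{\rm Coh}}_X$ I would produce the lift as the Rees-type module whose $n$-th graded piece is $\mathcal{E}_1\cap\pi^n\mathcal{E}_2\subseteq E_K$, with $t$ acting by the inclusions $\mathcal{E}_1\cap\pi^{n}\mathcal{E}_2\hookrightarrow\mathcal{E}_1\cap\pi^{n-1}\mathcal{E}_2$ and $s$ acting by multiplication by $\pi$; one checks $st=\pi$. As $\mathcal{E}_1$ and $\mathcal{E}_2$ are commensurable, $\mathcal{E}_1\cap\pi^n\mathcal{E}_2$ equals $\mathcal{E}_1$ for $n\ll 0$ and $\pi^n\mathcal{E}_2$ for $n\gg0$, so the module is finitely generated over $R[s,t]/(st-\pi)$; each graded piece is $R$-torsion-free, hence $R$-flat, and one verifies flatness over $R[s,t]/(st-\pi)$ together with the fact that the two charts recover $\mathcal{E}_1$ and $\mathcal{E}_2$. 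This produces the required lift, so $\underline{{\rm Coh}}_X$ is S-complete.

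The \emph{main obstacle} is the semistable case. Given a map $\overline{\mathrm{ST}}_R\setminus 0\to\modulistack[\semistable]{X}{r}{d}$, I would compose with the open immersion into $\underline{{\rm Coh}}_X$, fill it in as above, and check the lift factors through the open substack; by openness it suffices to show the image of the closed point $0$ is a semistable vector bundle of rank $r$ and degree $d$. This central fibre is the associated graded $\bigoplus_n A^n/A^{n+1}$ of the decreasing filtration $A^n=\im(\mathcal{E}_1\cap\pi^n\mathcal{E}_2\to E_k)$ of $E_k=(\mathcal{E}_1)_k$, and by the symmetry $s\leftrightarrow t$, $\mathcal{E}_1\leftrightarrow\mathcal{E}_2$ it is simultaneously the associated graded of an increasing filtration $B^\bullet$ of $(\mathcal{E}_2)_k$ with the same graded pieces. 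Writing $\mu=d/r$ and $c_n=\deg(A^n/A^{n+1})-\mu\cdot\rk(A^n/A^{n+1})$, semistability of $E_k$ applied to the subsheaves $A^{n_0}\subseteq E_k$ gives $\sum_{n\geq n_0}c_n\leq 0$, while semistability of $(\mathcal{E}_2)_k$ applied to $B^{n_0}\subseteq(\mathcal{E}_2)_k$ gives $\sum_{n\leq n_0}c_n\leq 0$; since $\sum_n c_n=0$, these force $c_n=0$ for all $n$. Hence every $A^n\subseteq E_k$ has slope $\mu$ and is therefore semistable, and each graded piece $A^n/A^{n+1}$, being a quotient of the semistable sheaf $A^n$ of slope $\mu$, has $\mu_{\min}\geq\mu$ and slope $\mu$, so is itself semistable of slope $\mu$. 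The central fibre is thus a direct sum of semistable bundles of slope $\mu$, and so a semistable vector bundle of rank $r$ and degree $d$ (it is locally free since a semistable sheaf of finite slope on a smooth curve is torsion-free). This gives S-completeness of $\modulistack[\semistable]{X}{r}{d}$.

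Finally, for $\modulistack[\semistable]{X}{r}{L}$ I would run the same construction on the underlying data in $\modulistack[\semistable]{X}{r}{d}$ and check compatibility with the determinant. Composing the lift with $\det\colon\modulistack[\semistable]{X}{r}{d}\to\stPic_X^d$ produces an extension over $0$ of the constant map $[L]$, and since $\mathrm{Pic}_X^d$ is separated (indeed proper, as $X$ is a smooth projective curve) the determinant of the central fibre is forced to be $L$ and the generic trivialisation $\det\mathcal{E}_i\cong L$ extends across $0$. Hence the lift factors through $\modulistack[\semistable]{X}{r}{L}$. Everything outside the third paragraph is either routine commensurable-lattice bookkeeping or a formal consequence of openness and affine diagonal; the genuine content is the two-sided slope estimate forcing the central fibre to remain semistable.
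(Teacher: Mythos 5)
Your proposal is correct, and it is essentially the argument the paper intends: the paper in fact \emph{omits} the proof of this proposition, saying only that it runs ``along the same lines as Proposition~\ref{proposition:Theta-reductive}'' using the interpretation of morphisms out of $\overline{\mathrm{ST}}_R$ from \cite[Remark~3.36]{1812.01128v3} --- your graded module $\bigoplus_n(\mathcal{E}_1\cap\pi^n\mathcal{E}_2)$ is exactly that interpretation (and the flatness check you defer is exactly what the cited result supplies), while your two-sided estimate forcing all $c_n=0$ is the correct semistable-fibre check, the genuine extra content compared with the one-sided slope argument used for $\Theta$-reductivity. One small repair: in the fixed-determinant case, separatedness of $\Pic_X^d$ only yields that the determinant of the central fibre is \emph{abstractly} isomorphic to $L$; the extension of the trivialization $\det\mathcal{E}\cong L$ across the codimension-two puncture should instead be justified by the full-faithfulness of restriction along $\overline{\mathrm{ST}}_R\setminus 0\hookrightarrow\overline{\mathrm{ST}}_R$ for targets with affine diagonal, applied to $\stPic_X^d$ (\cite[Proposition~3.17]{1812.01128v3}, already quoted in the paper), or equivalently by a Hartogs argument --- an immediate fix, so this is a gap in justification rather than in substance.
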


We omit the proof of this proposition, which can be proven along the same lines as Proposition \ref{proposition:Theta-reductive} using the interpretation of morphisms from $\overline{\rm ST}_R$ to $\modulistack[\semistable]{X}{r}{d}$ from \cite[Remark~3.36]{1812.01128v3}. 


We have summarized in Table \ref{table:properties} the properties of $\modulistack[\semistable]{X}{r}{d}$, $\modulistack{X}{r}{d}$, and $\underline{\rm Coh}_X$ discussed in the first three sections.

\begin{table}[t]
  \begin{adjustbox}{center}
    \small
    \begin{tabular}{lccc}
      \toprule
      Property                                                                                                                     & $\modulistack[\semistable]{X}{r}{d}$                       & $\modulistack{X}{r}{d}$                       & $\underline{\rm Coh}_X$ \\\midrule
      affine diagonal                                                                                                              & yes                                                        & yes,                                          & yes, \\
                                                                                                                                   &                                                            & Proposition \ref{proposition:affine-diagonal} & \cite[\spref{0DLY}]{stacks-project} \\
      \addlinespace
      locally of finite type                                                                                                       & yes                                                        & yes,                                          & yes,  \\
                                                                                                                                   &                                                            & Proposition \ref{proposition:affine-diagonal} & \cite[\spref{0DLZ}]{stacks-project} \\
      \addlinespace
      quasicompact                                                                                                                 & yes,                                                       & no,                                           & no \\
                                                                                                                                   & Proposition \ref{proposition:boundedness-of-semistability} & Example \ref{notbounded} \\
      \addlinespace
      $\Theta$-reductive                                                                                                           & yes,                                                       & no                                            & yes, \\
                                                                                                                                   & Proposition \ref{proposition:Theta-reductive}              &                                               & Proposition \ref{proposition:Theta-reductive} \\
      \addlinespace
      S-complete                                                                                                                   & yes,                                                       & no                                            & yes, \\
                                                                                                                                   & Proposition \ref{proposition:S-complete}                   &                                               & Proposition \ref{proposition:S-complete} \\
      \addlinespace
      existence part                                                                                                               & yes                                                        & yes                                           & yes \\
      of valuative criterion \\
      \addlinespace
      separated                                                                                                                    & no,                                                        & no                                            & no \\
                                                                                                                                   & Example \ref{example:not-separated} \\
      \addlinespace
      existence of                                                                                                                 & yes,                                                       & no                                            & no \\
      good moduli space                                                                                                            & Theorem \ref{theorem:good-moduli-space} \\
      \bottomrule
    \end{tabular}
  \end{adjustbox}
  \caption{Properties of the stacks $\modulistack[\semistable]{X}{r}{d}\subset\modulistack{X}{r}{d}\subset\underline{\rm Coh}_X$}
  \label{table:properties}
\end{table}

\subsection{Existence of good moduli spaces}
In this subsection we will establish the existence and basic properties of good moduli spaces for the stacks $\modulistack[\semistable]{X}{r}{d}$ and $\modulistack[\semistable]{X}{r}{L}$, but before doing so we make an observation regarding their topology.
\begin{lemma} \label{lemma:topology-of-semistable-stack}
  \
  \begin{enumerate}
      \item If $E$ is a semistable vector bundle, then the corresponding $k$-point $[E] \in \modulistack[\semistable]{X}{r}{d}(k)$ contains the point $[\gr_E]$ in its closure, where $\gr_E$ is the graded bundle associated to a Jordan--H\"older filtration.
      \item A point $[E] \in \modulistack[\semistable]{X}{r}{d}(k)$ is closed if and only if $E$ is polystable.
  \end{enumerate}
  The same holds for the stack $\modulistack[\semistable]{X}{r}{L}$.
\end{lemma}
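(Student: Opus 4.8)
The plan is to deduce everything from the $\Theta$-filtration picture together with the valuative behaviour of stable bundles. For part (i) I would use the correspondence recorded above between morphisms $\rho\colon\Theta\to\modulistack[\semistable]{X}{r}{d}$ and filtrations of a semistable bundle by semistable subsheaves of slope $\mu=d/r$, under which $\rho(1)=[E]$ and $\rho(0)=[\gr_E]$. A Jordan--H\"older filtration of $E$ has stable, hence semistable, subquotients of slope $\mu$, so it defines such a $\rho$. Since the closed point $0$ lies in the closure of the generic point $1$ of $\Theta$ and $\rho$ is continuous, $[\gr_E]=\rho(0)$ lies in the closure of $\rho(1)=[E]$, proving (i). This immediately gives one direction of (ii): if $[E]$ is closed then $[\gr_E]\in\overline{\{[E]\}}=\{[E]\}$, so $E\cong\gr_E$ is polystable.

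For the converse in (ii) I would show that a polystable point admits no proper specialization. Suppose $[E']\in\overline{\{[E]\}}$ with $E$ polystable; realizing this specialization of $k$-points by a discrete valuation ring (a standard fact for stacks of finite type) gives a vector bundle $\mathcal{E}$ on $X_R$ with $\mathcal{E}_K\cong E\otimes_k K$ and $\mathcal{E}_k\cong E'$, both semistable of slope $\mu$. The key input is a separatedness statement for stable bundles: if $\mathcal{A}$ is a family on $X_R$ with $\mathcal{A}_K\cong F\otimes_k K$ for $F$ stable and $\mathcal{A}_k$ semistable of slope $\mu(F)$, then $\mathcal{A}\cong F\otimes_k R$; in particular $\mathcal{A}_k\cong F$. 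I would prove this by scaling the generic isomorphism to a primitive homomorphism $\phi\colon F\otimes_k R\to\mathcal{A}$, whose reduction $\phi_k\colon F\to\mathcal{A}_k$ is nonzero, hence injective by the slope bookkeeping of Lemma \ref{lemma:no-morphisms} ($F$ stable, $\mathcal{A}_k$ semistable of equal slope), and then an isomorphism since source and target share the same rank and degree; Nakayama upgrades this to $\phi$ being an isomorphism of families.

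With this sub-lemma I would induct on the number of stable summands of $E=F_1\oplus E_1$. Saturating the generic inclusion $F_1\otimes_k K\hookrightarrow\mathcal{E}_K$ over $R$ --- using properness of the relative Quot scheme exactly as in the proof of Proposition \ref{proposition:Theta-reductive} --- produces a subbundle $\mathcal{F}_1\subseteq\mathcal{E}$, flat over $R$ with flat quotient, and $(\mathcal{F}_1)_K\cong F_1\otimes_k K$; the sub-lemma forces $\mathcal{F}_1\cong F_1\otimes_k R$, so $E'$ contains $F_1$ as a subbundle with quotient $\mathcal{Q}_k$, where $\mathcal{Q}$ is a family with $\mathcal{Q}_K\cong E_1\otimes_k K$ and central fibre semistable of slope $\mu$. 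By induction the Jordan--H\"older factors of $\mathcal{Q}_k$ are those of $E_1$, whence $\gr_{E'}\cong F_1\oplus\gr_{\mathcal{Q}_k}\cong E$; that is, $E'$ is $S$-equivalent to $E$. To upgrade this to $E'\cong E$ I would invoke upper semicontinuity of $s\mapsto\dim_{\kappa(s)}\operatorname{End}(\mathcal{E}_s)=\hh^0(X,\mathcal{E}_s^\vee\otimes\mathcal{E}_s)$, giving $\dim\operatorname{End}_X(E')\ge\dim\operatorname{End}_X(E)$, together with the standard fact that the polystable representative uniquely maximizes $\dim\operatorname{End}$ within an $S$-equivalence class; since $\gr_{E'}\cong E$ this forces $E'$ to be polystable, hence $E'\cong\gr_{E'}\cong E$.

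Finally, for the fixed-determinant stack $\modulistack[\semistable]{X}{r}{L}$ the same arguments apply once one carries along the isomorphism $\varphi\colon\det\mathcal{E}\xrightarrow{\sim}L$: the $\Theta$-degeneration of (i) is compatible with determinants because $\det(\gr_E)\cong\det E$ canonically, and the saturations and the sub-lemma in (ii) are compatible with taking determinants. I expect the main obstacle to be the converse direction of (ii), and within it the passage from ``$S$-equivalent'' to ``isomorphic'': the peeling step (establishing $\gr_{E'}\cong E$) requires carefully controlling the quotient family $\mathcal{Q}$ so that its central fibre is genuinely semistable of slope $\mu$, and the final isomorphism relies on the semicontinuity-plus-maximality principle rather than on the routine slope arithmetic.
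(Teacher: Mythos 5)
Your proposal is correct in substance, but for the crucial direction of (ii) it follows a genuinely different, and considerably longer, route than the paper. For (i) the two arguments are the same degeneration in different packaging: you invoke the $\Theta$-filtration correspondence recorded before Proposition \ref{proposition:Theta-reductive}, while the paper writes down the underlying Rees-type family explicitly --- the universal extension over the line in $\Ext_X^1(E'',E')$ spanned by a non-split extension, with special fiber $E'\oplus E''$ --- and iterates along a Jordan--H\"older filtration. For polystable $\Rightarrow$ closed, the paper stays purely topological: it takes a point $[F]$ in the closure of $[E]$, applies upper semicontinuity of $\hom_X(E_i,-)$ for each stable summand $E_i$ of $E$ along the specialization to $[\gr_F]$, notes that these numbers compute multiplicities of summands in polystable bundles, and forces $E\cong\gr_F$ by a rank count, contradicting the fact that two distinct points of $|\modulistack[\semistable]{X}{r}{d}|$ cannot each lie in the closure of the other. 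You instead realize the specialization over a DVR, prove a Langton-type separatedness statement for stable bundles, and peel off stable summands using properness of relative Quot schemes (as in Proposition \ref{proposition:Theta-reductive}) to conclude $\gr_{E'}\cong E$. This buys more than the paper's argument: along the way you establish that the stable locus satisfies the uniqueness part of the valuative criterion, and that the central fiber of any Langton extension of a polystable bundle is S-equivalent to it --- facts the paper only obtains a posteriori from the good moduli space. The price is the two points you flag, both of which are fillable: torsion in $\mathcal{Q}_k$ would make the torsion-free quotient of $E'$ a quotient of slope strictly less than $d/r$, contradicting semistability of $E'$, so $\mathcal{Q}_k$ is indeed a semistable bundle of slope $d/r$; and the unproven ``standard fact'' that the polystable bundle uniquely maximizes $\dim\operatorname{End}_X$ in its S-equivalence class --- true, but of comparable depth to the lemma itself --- can be avoided entirely. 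Once you know $\gr_{E'}\cong E$, part (i) applied to $E'$ gives $[E]=[\gr_{E'}]\in\overline{\{[E']\}}$, while $[E']\in\overline{\{[E]\}}$ by assumption, so the $T_0$ property of the topological space forces $[E']=[E]$; this is exactly the observation the paper's own proof runs on, and substituting it makes your argument complete without any appeal to endomorphism counting.
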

\begin{proof}
  \begin{enumerate}
    \item If $E$ is semistable but not stable, there exists a non-split extension~$0\to E'\to E\to E''\to 0$ of semistable vector bundles of the same slope. Let $\mathcal{E}$ be the universal family over the affine line in $\Ext^1_X(E'',E')$ spanned by this extension, so that $\mathcal{E}$ is a family of semistable vector bundles on $X$ parameterized by $\mathbb{A}^1$ such that $\mathcal{E}_t\cong E$ if $t\neq 0$ and $\mathcal{E}_0\cong E'\oplus E''$. The resulting map
    \[
      [\mathcal{E}]\colon\mathbb{A}^1\to\modulistack[\semistable]{X}{r}{d}
    \]
    satisfies $t\mapsto [E]$ if $t\neq 0$ and $0\mapsto [E'\oplus E'']$. It follows that $[E'\oplus E'']$ is contained in the closure of $[E]$. Iterating this construction for $E'$ and $E''$ shows that $[\gr_E]$ is in the closure of $[E]$.

    \item For a contradiction, suppose $E$ is a polystable vector bundle such that $[E]$ is not closed, and let $[F]$ be in its closure. By (i), $[\gr_F]$ is in the closure of $[F]$ and, since no two points can be in the closure of each other, we must have $E \not\cong \gr_F$.

      On the other hand, if $E_i$ is stable with the same slope as $E$, then $E_i$ appears as a direct summand of $E$ with multiplicity $\mathrm{hom}_X(E_i,E)=\dim_k\Hom_X(E_i,E)$ and similarly for $\gr_F$.

      For any $E_i$, the function $\mathrm{hom}_X(E_i,-)$ is upper semicontinuous in the second variable, since $[\gr_F]$ is in the closure of $[E]$, so we have $\mathrm{hom}_X(E_i,E)\leq\mathrm{hom}_X(E_i,\gr_F)$. This means that any stable summand of $E$ appears in $\gr_F$ with at least the same multiplicity. But $E$ and $F$ have the same rank, so we must have $E \cong \gr_F$, a contradiction. Thus, $[E]$ is closed.
  \end{enumerate}
\end{proof}

\begin{theorem}\label{theorem:good-moduli-space}
  \ \vspace{-.2cm}
  \begin{enumerate}
    \item There exist good moduli spaces
      \begin{equation*}
        \begin{gathered}
          \modulistack[\semistable]{X}{r}{d} \to \modulispace[\semistable]{X}{r}{d} \\
          \modulistack[\semistable]{X}{r}{L} \to \modulispace[\semistable]{X}{r}{L}.
        \end{gathered}
      \end{equation*}

    \item These good moduli space maps induce bijections between the $k$-points of the good moduli space and S-equivalence classes of semistable sheaves.

    \item The good moduli spaces $\modulispace[\semistable]{X}{r}{d}$ and $\modulispace[\semistable]{X}{r}{L}$ are irreducible, proper algebraic spaces of dimensions
      \begin{equation*}
\begin{aligned}
  \dim \modulispace[\semistable]{X}{r}{d} &= r^2(g-1) + 1, \\
    \dim \modulispace[\semistable]{X}{r}{L} &= (r^2-1)(g-1).
\end{aligned}
\end{equation*}
  \end{enumerate}
\end{theorem}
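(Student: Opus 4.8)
The plan is to derive all three parts by combining the structural results established earlier; essentially no new argument is required beyond careful bookkeeping. For part (i) I would check the hypotheses of Theorem \ref{theorem:gms-existence}. By Theorem \ref{theorem:properties} each of $\modulistack[\semistable]{X}{r}{d}$ and $\modulistack[\semistable]{X}{r}{L}$ is an algebraic stack of finite type over $k$ with affine diagonal, and by Propositions \ref{proposition:Theta-reductive} and \ref{proposition:S-complete} both are $\Theta$-reductive and S-complete. Theorem \ref{theorem:gms-existence} then produces good moduli spaces $\modulistack[\semistable]{X}{r}{d}\to\modulispace[\semistable]{X}{r}{d}$ and $\modulistack[\semistable]{X}{r}{L}\to\modulispace[\semistable]{X}{r}{L}$ with separated target, proving (i).

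For part (ii) I would read off the fibers from the topology of the stack. By Theorem \ref{theorem:gms-properties}(iii) the map $\pi$ identifies two $k$-points $[E]$ and $[E']$ precisely when the closures of $\{[E]\}$ and $\{[E']\}$ in $|\modulistack[\semistable]{X}{r}{d}|$ intersect, and $\pi$ is a bijection on closed points. By Lemma \ref{lemma:topology-of-semistable-stack} the closed points are exactly the polystable bundles and $[\gr_E]$ lies in the closure of every $[E]$; since each fiber of a good moduli space has a single closed point, $[\gr_E]$ is the unique closed point of $\overline{\{[E]\}}$. Hence the two closures meet if and only if $\gr_E\cong\gr_{E'}$, i.e.\ $E$ and $E'$ are S-equivalent, so the $k$-points of the good moduli space biject with polystable bundles, hence with S-equivalence classes. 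The fixed-determinant case is identical.

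For part (iii), irreducibility is immediate: $\modulispace[\semistable]{X}{r}{d}$ is the image of the irreducible stack $\modulistack[\semistable]{X}{r}{d}$ (Proposition \ref{prop:irreducibility}) under the surjection $\pi$ (Theorem \ref{theorem:gms-properties}(i)), and a continuous image of an irreducible space is irreducible; likewise for $\modulispace[\semistable]{X}{r}{L}$. Properness follows from the final clause of Theorem \ref{theorem:gms-existence}, as the stacks are universally closed by Langton's Theorem \ref{theorem:Langton}. For the dimensions I would restrict to the dense open stable locus (Remark \ref{remark:existence-stable}). This locus is saturated for $\pi$, since any bundle S-equivalent to a stable bundle is that bundle itself, so it is the preimage of an open $\modulispace[\stable]{X}{r}{d}\subset\modulispace[\semistable]{X}{r}{d}$, over which $\pi$ is a $\mathbb{G}_{\mathrm{m}}$-gerbe by Lemma \ref{lemma:stable-endomorphisms-field} (respectively a $\mu_r$-gerbe after fixing the determinant, using Example \ref{example:to-dm-or-not-to-dm}). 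Using $\dim\mathrm{B}\mathbb{G}_{\mathrm{m}}=-1$ and $\dim\mathrm{B}\mu_r=0$, the dimension of the good moduli space is that of the stack increased by $1$, respectively by $0$. Finally, at a stable (hence simple) bundle the stack dimensions are $\dim\Ext_X^1(E,E)-\dim\Hom_X(E,E)=r^2(g-1)$ and, with determinant fixed, $(r^2-1)(g-1)$, by the tangent space computations of Remark \ref{remark:dimension}; this yields the asserted values $r^2(g-1)+1$ and $(r^2-1)(g-1)$.

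The only step that needs genuine care is the dimension bookkeeping in part (iii): one must correctly relate the dimension of the stack---which is non-separated and has positive-dimensional stabilizers---to that of its good moduli space. This hinges on density of the stable locus, on its being saturated so that $\pi$ restricts to a gerbe banded by the generic stabilizer, and on the convention $\dim\mathrm{B}G=-\dim G$. Every other assertion is a direct appeal to a result proved earlier in the paper.
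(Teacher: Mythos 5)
Your proposal is correct and follows essentially the same route as the paper: part (i) via Theorem \ref{theorem:gms-existence} together with Propositions \ref{proposition:Theta-reductive} and \ref{proposition:S-complete}, part (ii) via Theorem \ref{theorem:gms-properties}(iii) and Lemma \ref{lemma:topology-of-semistable-stack}, and part (iii) via surjectivity onto the image of an irreducible stack, Langton's theorem with the properness clause of Theorem \ref{theorem:gms-existence}, and the gerbe structure over the stable locus combined with Proposition \ref{proposition:smooth} and Remark \ref{remark:dimension}. Your treatment of the dimension count (saturation of the stable locus, $\mathbb{G}_{\mathrm{m}}$- versus $\mu_r$-gerbes, and the convention $\dim\mathrm{B}G=-\dim G$) merely fills in details the paper leaves as a sketch.
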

\begin{proof}\
  \begin{enumerate}
      \item This follows from Propositions \ref{proposition:Theta-reductive} and \ref{proposition:S-complete} and the first part of Theorem \ref{theorem:gms-existence}.

      \item By Theorem \ref{theorem:gms-properties}(iii), two $k$-points $[E], [E'] \in \modulistack[\semistable]{X}{r}{d}$ map to the same point in $\modulispace[\semistable]{X}{r}{d}$ if and only if the closures of $\{[E]\}$ and $\{[E']\}$ in $\modulistack[\semistable]{X}{r}{d}$ intersect. On the one hand, if $E$ is any semistable vector bundle, then by Lemma \ref{lemma:topology-of-semistable-stack}(i) $[E]$ contains $[\gr_E]$ in its closure, so both points map to the same point in $\modulispace[\semistable]{X}{r}{d}$. On the other hand, if $E$ and $E'$ are polystable and nonisomorphic, then by Lemma \ref{lemma:topology-of-semistable-stack}(ii), the corresponding points in $\modulistack[\semistable]{X}{r}{d}$ are closed and distinct, hence map to distinct points in $\modulispace[\semistable]{X}{r}{d}$.

      \item The stacks are irreducible by Proposition \ref{prop:irreducibility} and the good moduli space maps are surjective by Theorem \ref{theorem:gms-properties}(i), so the good moduli spaces are irreducible as well. They are proper by Proposition \ref{theorem:Langton} and the second part of Theorem \ref{theorem:gms-existence}.

      To compute the dimensions, one can show using (ii) and Lemma \ref{lemma:stable-endomorphisms-field} that over the stable locus $\modulistack[\stable]{X}{r}{d} \subs \modulistack[\semistable]{X}{r}{d}$ the good moduli space map is a $\mathbb{G}_{\mathrm{m}}$-gerbe over its image $\modulispace[\stable]{X}{r}{d}$, and similarly for $\modulistack[\semistable]{X}{r}{L}$. Since the stable loci are moreover nonempty by Remark \ref{remark:existence-stable}, the dimensions now follow from Proposition \ref{proposition:smooth} and Remark \ref{remark:dimension}.\qedhere
  \end{enumerate}
\end{proof}

\section{Determinantal line bundles}
\label{section:determinantal}
Let $X$ be a smooth, projective and connected curve over $k$. For an algebraic stack $\mathscr{S}$ over $k$ we consider the diagram
\[
  \begin{tikzcd}
    & X\times \mathscr{S} \arrow[ld, swap, "q"] \arrow[rd, "p"] \\
    X & & \mathscr{S}
  \end{tikzcd}
\]
In this setting we can freely apply cohomology and base change arguments, because~$p$ is representable by schemes. If $\mathcal{E}$ is a vector bundle on $X \times \mathscr{S}$, then the derived direct image $\mathbf{R}p_* \mathcal{E}$ is a perfect complex on $\mathscr{S}$ with amplitude in $[0,1]$. In fact, something stronger is true. We will use the construction of \cite[Proposition~2.1.10]{MR2665168}. Let $\mathcal{E}$ be a vector bundle on $X\times \modulistack{X}{r}{d}$. Then there exists a short exact sequence
\[
    0\to\mathcal{E}^{-1}\to\mathcal{E}^0\to\mathcal{E}\to 0
\]
of vector bundles such that $\mathrm{R}^0p_*\mathcal{E}^j=0$ and $K^j\colonequals\mathrm{R}^1\mathcal{E}^{j+1}$ is locally free for $j=0,1$. In particular, we have a quasi-isomorphism $K^{\bullet}\colonequals[K^0\to K^1]\to\mathbf{R}p_*\mathcal{E}$.
We make the following definition.
\begin{definition}[Determinants and sections]
  \label{defn:determinants-and-sections}
  If $\mathcal{E}$ is a vector bundle on $X \times \mathscr{S}$ and $[K^0\to K^1]$ is a two-term complex of locally free sheaves such that $\mathbf{R}p_* \mathcal{E} \sim [K^0 \to K^1]$, we define the line bundle
  \[
    \det \mathbf{R}p_* \mathcal{E}\colonequals \det(K^0) \otimes \det(K^1)^{\vee}.
  \]
  If $\rk (\mathbf{R}p_* \mathcal{E}) = 0$, then $\rk K^0 = \rk K^1$ and the dual $(\det \mathbf{R}p_* \mathcal{E})^{\vee}$ is equipped with a section, locally given by the determinant of the map $K^0 \to K^1$.
\end{definition}
As described in \cite[\spref{0FJI}]{stacks-project}, the definition of $\det\mathbf{R}p_*\mathcal{E}$ is independent of the choice of quasi-isomorphism with a two-term complex.



We will apply this construction in the case $\mathscr{S} = \modulistack{X}{r}{d}$ and $\mathcal{E} = \universalbundle \otimes q^*V$, where $\universalbundle$ is the universal vector bundle on $X \times \modulistack{X}{r}{d}$ and $V$ is a vector bundle on $X$. The universal vector bundle exists tautologically because we are working with the moduli stack. Consider the diagram
\[
  \begin{tikzcd}
    & X \times \modulistack{X}{r}{d} \arrow[ld, swap, "q"] \arrow[rd, "p"] \\
    X & & \modulistack{X}{r}{d}.
  \end{tikzcd}
\]

\begin{proposition}[Determinant is multiplicative]
  \label{proposition:determinant-mult}
 If
\[
  0 \to \mathcal{E}' \to \mathcal{E} \to \mathcal{E}'' \to 0,
\]
is an exact sequence of vector bundles on $X \times \modulistack{X}{r}{d}$, then
\[
\det \mathbf{R}p_* \mathcal{E} = (\det \mathbf{R}p_* \mathcal{E}') \otimes (\det \mathbf{R}p_* \mathcal{E}'').
\]
\end{proposition}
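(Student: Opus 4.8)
The plan is to reduce the statement to the elementary fact that the determinant of a short exact sequence of vector bundles is multiplicative, after realizing all three derived pushforwards by a single short exact sequence of two-term complexes of locally free sheaves. Conceptually, applying the exact functor $\mathbf{R}p_*$ to the given sequence yields a distinguished triangle
\[ \mathbf{R}p_* \mathcal{E}' \to \mathbf{R}p_* \mathcal{E} \to \mathbf{R}p_* \mathcal{E}'' \xrightarrow{+1} \]
of perfect complexes on $\modulistack{X}{r}{d}$, and the result is then an instance of the additivity of the determinant functor on distinguished triangles (Knudsen--Mumford; see also \cite[\spref{0FJI}]{stacks-project}). To keep the argument within the concrete two-term formalism of Definition \ref{defn:determinants-and-sections}, I would make this additivity explicit as follows.

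First I would produce compatible resolutions. Applying the construction of \cite[Proposition~2.1.10]{MR2665168} with a single sufficiently positive twist that works simultaneously for $\mathcal{E}'$, $\mathcal{E}$, and $\mathcal{E}''$, one obtains two-term resolutions $\mathcal{E}'^\bullet$, $\mathcal{E}^\bullet$, $\mathcal{E}''^\bullet$ by vector bundles fitting into a short exact sequence of complexes
\[ 0 \to \mathcal{E}'^\bullet \to \mathcal{E}^\bullet \to \mathcal{E}''^\bullet \to 0 \]
that is exact in each degree, with $\mathrm{R}^0 p_* \mathcal{E}^{j} = 0$ and $\mathrm{R}^1 p_* \mathcal{E}^{j}$ locally free throughout (the functorial resolution by $p^\ast p_\ast(\mathcal{E}\otimes q^\ast\mathcal{O}_X(N))\otimes q^\ast\mathcal{O}_X(-N)$ is exact in $\mathcal{E}$ once $N$ is large enough that the relevant $\mathrm{R}^1 p_\ast$ vanishes). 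Taking $\mathrm{R}^1 p_*$ then gives a short exact sequence of two-term complexes $0 \to K'^\bullet \to K^\bullet \to K''^\bullet \to 0$ representing the three derived pushforwards, which is again degreewise exact. Here $K^j$ sits in a short exact sequence of vector bundles with $K'^j$ and $K''^j$.

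The conclusion follows degreewise: for any short exact sequence of vector bundles the determinant is multiplicative, so $\det K^0 \cong \det K'^0 \otimes \det K''^0$ and $\det K^1 \cong \det K'^1 \otimes \det K''^1$. Dualizing the degree-$1$ identity and tensoring gives
\[ \det\mathbf{R}p_*\mathcal{E} = \det K^0 \otimes (\det K^1)^{\vee} \cong \big(\det K'^0 \otimes (\det K'^1)^{\vee}\big) \otimes \big(\det K''^0 \otimes (\det K''^1)^{\vee}\big), \]
which is exactly $(\det\mathbf{R}p_*\mathcal{E}')\otimes(\det\mathbf{R}p_*\mathcal{E}'')$. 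The main obstacle I anticipate is the first step: arranging the three resolutions to form a genuine short exact sequence of two-term complexes while preserving the vanishing of $\mathrm{R}^0 p_*$ and local freeness of $\mathrm{R}^1 p_*$ on all terms at once. This is where the uniform choice of twist is essential, and the independence of $\det\mathbf{R}p_*$ from the chosen two-term representative, cited from \cite[\spref{0FJI}]{stacks-project}, guarantees that the resulting isomorphism does not depend on these auxiliary choices.
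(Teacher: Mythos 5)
Your proposal is correct and follows essentially the same route as the paper: compatible two-term resolutions via the construction of \cite[Proposition~2.1.10]{MR2665168}, a resulting degreewise short exact sequence of the complexes $K^{\bullet}$ of locally free pushforwards, and multiplicativity of the determinant for short exact sequences of vector bundles. Your added justification for the compatibility of the resolutions (a single sufficiently positive twist and exactness of the functorial construction) is a detail the paper leaves implicit, but it is the same argument.
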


\begin{proof}
As before, we can find a two-term complex $\mathcal{E}^{\bullet}=[\mathcal{E}^{-1}\to\mathcal{E}^0]$ supported in degrees $[-1,0]$ and a quasi-isomorphism $\mathcal{E}^{\bullet}\to \mathcal{E}$ with the properties described above. We choose $\mathcal{E}^{\prime\bullet}$ and $\mathcal{E}^{\prime\prime\bullet}$ similarly. In fact, it follows from the construction in the proof of \cite[Proposition~2.1.10]{MR2665168} that we may choose these resolutions compatibly, so that we have a short exact sequence
\[
    0\to\mathcal{E}^{\prime\bullet}\to\mathcal{E}^{\bullet}\to\mathcal{E}^{\prime\prime\bullet}\to 0
\]
of complexes, compatible with the quasi-isomorphisms to the members in our original exact sequence. Taking cohomology, we find a short exact sequence
\[
    0\to K^{\prime\bullet}\to K^{\bullet}\to K^{\prime\prime\bullet}\to 0
\]
of complexes of locally free sheaves on $\modulistack{X}{r}{d}$. The result follows from the multiplicativity of the determinant in short exact sequences of locally free sheaves.
\end{proof}

\begin{definition}[Determinantal line bundles and sections]
  \label{definition:determinantal}
  For a vector bundle $V$ on $X$, we define the \emph{determinantal line bundle}
  \[
    \mathcal{L}_V\colonequals(\det \mathbf{R}p_*(q^*V \otimes \universalbundle))^{\vee}
  \]
  on $\modulistack{X}{r}{d}$ associated to~$V$. If $\chi(X,V \otimes E) = 0$ for all $[E] \in \modulistack{X}{r}{d}$, or equivalently by the Riemann--Roch theorem
  \begin{equation}
    \label{equation:riemann-roch-condition}
    d \rk(V) + r \deg(V) + r \rk(V) (1-g) = 0,
  \end{equation}
  then the rank of $\mathbf{R}p_*(q^*V \otimes \universalbundle)$ is zero and we define the section $s_V \in \Gamma(\modulistack{X}{r}{d}, \mathcal{L}_V)$ as in Definition \ref{defn:determinants-and-sections}.
\end{definition}

\begin{remark}
  Since $\mathbf{R}p_*(q^*V \otimes \universalbundle)$ is perfect, its construction commutes with base change.  In particular, its restriction to a $k$-point $[E] \in \modulistack{X}{r}{d}$ is identified with the two-term complex $\mathbf{R}\Gamma(X, E \otimes V)$. If moreover $\chi(X, V \otimes E) = \hh^0(X, V \otimes E) - \hh^1(X, V \otimes E) = 0$, we see that indeed $\rk(\mathbf{R}p_*(q^*V \otimes \universalbundle)) = 0$.
\end{remark}

In order to state some basic properties of the construction in Definition \ref{definition:determinantal}, we recall that, by the universal property of the Picard stack $\stPic^d_X$, the determinant $\det(\universalbundle)$ of the universal vector bundle on $X \times \modulistack{X}{r}{d}$ induces a morphism
\[ \det\colon\modulistack{X}{r}{d} \to \stPic_X^d \]
such that $(\det \times \id_X)^*\mathscr{P} \cong \det(\universalbundle)$, where $\mathscr{P}$ is the Poincar\'e bundle on $\stPic_X^d \times X$.

\begin{proposition}[Properties of the determinantal line bundle]
  \label{proposition:donaldson-morphism}
  The following hold:
\begin{enumerate}
    \item The assignment $V \mapsto \mathcal{L}_V$ induces a group homomorphism
  \begin{equation}
    \label{equation:donaldson}
    \Kzero(X) \to \Pic(\modulistack{X}{r}{d}).
  \end{equation}
  Consequently, the isomorphism class of the line bundle $\mathcal{L}_V$ depends only on $\rk(V)$ and $\det(V)$.

  \item If $V$ and $W$ are vector bundles with equal rank and degree, then there exists a line bundle $\mathcal{N}$ on $\stPic_X^d$ such that
    \[ \mathcal{L}_W \cong \mathcal{L}_V \otimes \operatorname{det}^*\mathcal{N}. \]
\end{enumerate}

\end{proposition}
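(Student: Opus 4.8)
The plan is to deduce both parts from the multiplicativity of the determinant of cohomology (Proposition~\ref{proposition:determinant-mult}) together with the standard structure of $\Kzero(X)$ for a smooth curve. For part (i), I would first note that pulling back along the flat map $q$ and tensoring by the locally free sheaf $\universalbundle$ are exact operations, so a short exact sequence $0\to V'\to V\to V''\to 0$ of vector bundles on $X$ induces a short exact sequence $0\to q^*V'\otimes\universalbundle\to q^*V\otimes\universalbundle\to q^*V''\otimes\universalbundle\to 0$ of vector bundles on $X\times\modulistack{X}{r}{d}$. Applying Proposition~\ref{proposition:determinant-mult} and dualizing gives $\mathcal{L}_V\cong\mathcal{L}_{V'}\otimes\mathcal{L}_{V''}$, so $V\mapsto[\mathcal{L}_V]$ is additive on short exact sequences. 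Since $X$ is smooth, $\Kzero(X)$ is generated by vector bundle classes with relations coming from short exact sequences, so the assignment descends to a group homomorphism $\Kzero(X)\to\Pic(\modulistack{X}{r}{d})$. For the final clause I would invoke the standard isomorphism $\Kzero(X)\cong\mathbb{Z}\oplus\Pic(X)$ for a smooth curve, given by $[V]\mapsto(\rk V,\det V)$; concretely $[V]=(\rk V-1)[\mathcal{O}_X]+[\det V]$, using that every bundle on a curve has a filtration with line bundle quotients and the relation $[L]+[M]=[\mathcal{O}_X]+[L\otimes M]$. Consequently $\mathcal{L}_V$ depends only on $\rk V$ and $\det V$.

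For part (ii), I would use part (i) to reduce to line bundles. Since $\rk V=\rk W$, in $\Kzero(X)$ we have $[W]-[V]=[\det W]-[\det V]$, so $\mathcal{L}_W\otimes\mathcal{L}_V^{-1}\cong\mathcal{L}_{\det W}\otimes\mathcal{L}_{\det V}^{-1}$, where $\det V$ and $\det W$ are now line bundles of the same degree $\deg V=\deg W$. It therefore suffices to treat two line bundles $L_1,L_2$ of equal degree, and the key computation is the effect of twisting by a point: for $x\in X(k)$, tensoring $0\to\mathcal{O}_X(-x)\to\mathcal{O}_X\to k(x)\to 0$ by $L(x)$, pulling back along $q$ and tensoring with $\universalbundle$ yields
\[ 0\to q^*L\otimes\universalbundle\to q^*L(x)\otimes\universalbundle\to\universalbundle|_{\{x\}\times\modulistack{X}{r}{d}}\to 0. \]
Applying $\det\mathbf{R}p_*$ and using that $p$ restricts to an isomorphism on $\{x\}\times\modulistack{X}{r}{d}$, the third term contributes $\det\big(\universalbundle|_{\{x\}\times\modulistack{X}{r}{d}}\big)$ in degree zero, giving $\mathcal{L}_{L(x)}\otimes\mathcal{L}_L^{-1}\cong\big(\det\universalbundle|_{\{x\}\times\modulistack{X}{r}{d}}\big)^{-1}$. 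Now I would observe that this twist is pulled back from the Picard stack: since $\det\universalbundle\cong(\det\times\id_X)^*\mathscr{P}$, its restriction to $\{x\}\times\modulistack{X}{r}{d}$ equals $\det^*\big(\mathscr{P}|_{\stPic_X^d\times\{x\}}\big)$. Writing $L_1\otimes L_2^{-1}\cong\mathcal{O}_X\big(\sum x_i-\sum y_j\big)$ with equally many $x_i$ and $y_j$ (possible since the degrees coincide) and chaining the point-twist formula gives $\mathcal{L}_{L_1}\otimes\mathcal{L}_{L_2}^{-1}\cong\det^*\mathcal{N}$ with $\mathcal{N}=\bigotimes_j\mathscr{P}|_{\{y_j\}\times\stPic_X^d}\otimes\bigotimes_i\mathscr{P}|_{\{x_i\}\times\stPic_X^d}^{-1}$, which completes the proof.

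I expect the main obstacle to be carrying out part (ii) at the level of honest line bundles rather than numerical classes. A Grothendieck--Riemann--Roch computation would only show that $c_1(\mathcal{L}_{L_1})=c_1(\mathcal{L}_{L_2})$, since $c_1(L_1)=c_1(L_2)$ in $\HH^2(X;\mathbb{Z})$ whenever the degrees agree; this is strictly weaker than the required isomorphism in $\Pic$. The genuine content is therefore the explicit determinant-of-cohomology identity above, and two technical points deserve care. First, one must justify multiplicativity of $\det\mathbf{R}p_*$ for a short exact sequence whose quotient $\universalbundle|_{\{x\}\times\modulistack{X}{r}{d}}$ is a torsion sheaf; this follows from the triangulated version of the construction underlying Proposition~\ref{proposition:determinant-mult}, or by replacing the quotient with its two-term locally free resolution $[q^*\mathcal{O}_X(-x)\otimes\universalbundle\to\universalbundle]$. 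Second, one should check that the fiber-line twist arising from $L(x)|_x$ contributes only a trivial line bundle on $\modulistack{X}{r}{d}$ and hence does not affect the isomorphism class.
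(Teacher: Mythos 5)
Your proposal is correct and follows essentially the same route as the paper: part (i) via Proposition~\ref{proposition:determinant-mult} and the isomorphism $\Kzero(X)\cong\mathbb{Z}\oplus\Pic(X)$ given by rank and determinant, and part (ii) by reducing to the class of a skyscraper sheaf and using the identity $\det\mathbf{R}p_*(\universalbundle\otimes q^*\mathcal{O}_x)=\det\bigl(\universalbundle|_{\{x\}\times\modulistack{X}{r}{d}}\bigr)=\operatorname{det}^*\bigl(\mathscr{P}|_{\stPic_X^d\times\{x\}}\bigr)$, which is exactly the paper's key computation. Your phrasing of the reduction through iterated point-twists of line bundles, rather than the paper's ``classes differ by a degree-zero divisor class,'' is only a cosmetic difference, and your two flagged technical points (multiplicativity for torsion quotients via a two-term locally free resolution, and triviality of the constant twist by $L(x)|_x$) are handled correctly.
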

\begin{proof}
        An exact sequence $0 \to V_1 \to V_2 \to V_3 \to 0$ of vector bundles on $X$ induces an exact sequence of vector bundles
        \[ 0 \to q^* V_1 \otimes \universalbundle \to q^* V_2 \otimes \universalbundle \to q^* V_3 \otimes \universalbundle \to 0, \]
        which by Proposition \ref{proposition:determinant-mult} induces an isomorphism
        \[ \mathcal{L}_{V_2} \cong \mathcal{L}_{V_1} \otimes \mathcal{L}_{V_3}. \]
        Thus, we have a group homomorphism $\Kzero(X) \to \Pic(\modulistack{X}{r}{d})$. The second statement follows from the isomorphism $\Kzero(X) \cong \mathbb{Z} \otimes \Pic(X)$ given by $V \mapsto (\rk(V), \det(V))$.
        This proves (i).

        By (i), the isomorphism type of $\mathcal{L}_V$ depends only on the rank $r_V$ and determinant of $V$, so we may assume that $V = \mathcal{O}_X^{\oplus r_V - 1} \oplus \mathcal{O}_X(D)$ for some divisor $D$ on $X$, and similarly for $W$. Moreover, writing $D = D_1 - D_2$ as a difference of effective divisors and using standard exact sequences, we see that $[\mathcal{O}_X(D)] = [\mathcal{O}_X] + [\mathcal{O}_{D_1}] - [\mathcal{O}_{D_2}]$ in $\Kzero(X)$. This implies that the classes of $V$ and $W$ in $\Kzero(X)$ differ only by the class of a divisor of degree $0$. Thus, by the additivity of the determinantal construction, it suffices to prove that
          \[ \det \mathbf{R} p_*(\universalbundle \otimes q^*\mathcal{O}_{x}) \cong \operatorname{det}^*\mathcal{N}' \]
        for some line bundle $\mathcal{N}'$ on $\stPic_X^d$, where $x \in X$ is a closed point. If we view $\mathcal{E}_x = \universalbundle|_{\modulistack[\semistable]{X}{r}{d} \times \{x\}}$ and $\mathcal{P}_x = \mathcal{P}|_{\stPic_X^d \times \{x\}}$ as sheaves on $\modulistack[\semistable]{X}{r}{d}$ and $\stPic_X^d$ respectively, then we have
        \[ \det \mathbf{R} p_*(\universalbundle \otimes q^*\mathcal{O}_x) = \det \mathcal{E}_x = \operatorname{det}^* \mathcal{P}_x. \]
        This proves (ii).

\end{proof}

\begin{definition}
  Let~$E$ be a vector bundle on~$X$. We say that~$E$ is \emph{cohomology-free} if~$\hh^0(X,E)=\hh^1(X,E)=0$.
\end{definition}


The following proposition relates cohomology-freeness to the properties of sections of determinantal line bundles. It will be an essential tool in the ampleness proof.

\begin{proposition}
  \label{proposition:determinantal-line-bundle-properties}
  If $\chi(X,E \otimes V) = 0$ for all $[E] \in \modulistack{X}{r}{d}$, then the following are equivalent:
  \begin{itemize}
    \item the section $s_V \in \Gamma(\modulistack{X}{r}{d}, \mathcal{L}_V)$ is nonzero at a vector bundle $[E] \in \modulistack{X}{r}{d}$;
    \item $E \otimes V$ is cohomology-free.
  \end{itemize}
\end{proposition}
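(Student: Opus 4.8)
The plan is to reduce the claim to a pointwise linear-algebra statement via cohomology and base change. Since by hypothesis $\chi(X, E \otimes V) = 0$ for all $[E]$, the perfect complex $\mathbf{R}p_*(q^*V \otimes \universalbundle)$ has rank zero everywhere, so, as in the construction preceding Definition \ref{defn:determinants-and-sections}, it is quasi-isomorphic to a two-term complex $\phi\colon K^0 \to K^1$ of locally free sheaves with $\rk K^0 = \rk K^1$. Under this identification $\mathcal{L}_V = \det(K^0)^\vee \otimes \det(K^1) = \Hom(\det K^0, \det K^1)$, and by Definition \ref{defn:determinants-and-sections} the section $s_V$ is globally given by the determinant $\det\phi\colon \det K^0 \to \det K^1$.

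Next I would pass to the fiber at a $k$-point $[E] \in \modulistack{X}{r}{d}$. As noted in the Remark following Definition \ref{definition:determinantal}, formation of $\mathbf{R}p_*(q^*V \otimes \universalbundle)$ commutes with base change because the complex is perfect, so the restriction of $[K^0 \to K^1]$ to $[E]$ is a two-term complex of $k$-vector spaces computing $\mathbf{R}\Gamma(X, E \otimes V)$. Writing $\phi_{[E]}\colon K^0|_{[E]} \to K^1|_{[E]}$ for the restricted map, this says precisely that $\ker\phi_{[E]} \cong \HH^0(X, E \otimes V)$ and $\coker\phi_{[E]} \cong \HH^1(X, E \otimes V)$, while $\dim K^0|_{[E]} = \dim K^1|_{[E]}$.

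Finally I would conclude by linear algebra. The value $s_V([E])$ in the fiber $\mathcal{L}_V|_{[E]} = \det(K^0|_{[E]})^\vee \otimes \det(K^1|_{[E]})$ is $\det\phi_{[E]}$, which is nonzero if and only if $\phi_{[E]}$ is an isomorphism; and because source and target have equal dimension, $\phi_{[E]}$ is an isomorphism if and only if both $\ker\phi_{[E]}$ and $\coker\phi_{[E]}$ vanish, i.e.\ if and only if $\HH^0(X, E \otimes V) = \HH^1(X, E \otimes V) = 0$. This is exactly the condition that $E \otimes V$ be cohomology-free, giving the equivalence. The only point demanding care is the compatibility of the determinant section with base change—that $s_V([E])$ is identified with $\det\phi_{[E]}$ under the canonical isomorphism of fibers—but this is precisely the base-change property of the determinantal construction for perfect complexes invoked above, so no serious obstacle arises beyond this bookkeeping.
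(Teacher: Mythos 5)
Your proof is correct and takes essentially the same approach as the paper's: both identify the section $s_V$ at a point $[E]$ with the determinant of the restricted map $K^0|_{[E]} \to K^1|_{[E]}$, use base change for the perfect complex $\mathbf{R}p_*(q^*V \otimes \universalbundle)$ to identify kernel and cokernel with $\HH^0(X, E\otimes V)$ and $\HH^1(X, E\otimes V)$, and conclude by the linear-algebra fact that a square matrix has nonzero determinant if and only if it is invertible. Your write-up simply makes explicit the base-change bookkeeping that the paper's two-sentence proof leaves implicit.
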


\begin{proof}
  In the setup of Definition \ref{definition:determinantal} the morphism of line bundles $\det j\colon \det(K_0) \to \det(K_1)$ is nonzero at the point~$[E]\in\modulistack{X}{r}{d}$ if and only if the morphism of vector bundles~$j\colon K_0 \to K_1$ is an isomorphism at~$[E]$. Since the derived direct image sheaf is the cohomology of this complex, this occurs if and only if $\hh^0(X,E\otimes V) = \hh^1(X,E\otimes V) =0$. 
\end{proof}

\begin{remark}
  We emphasize that while $\mathcal{L}_V$ only depends on $\rk(V)$ and $\det(V)$, the section $s_V$ \emph{does} depend on $V$ itself.  We will leverage this fact to produce enough sections of $\mathcal{L}_V$ to establish ampleness. Notice also that, under the assumption $\chi(X,E \otimes V) = 0$, the vanishing of $\HH^0(X, E \otimes V)$ is equivalent to the vanishing of $\HH^1(X, E \otimes V)$.
\end{remark}

We now specialize the construction to the open substack~$\modulistack[\semistable]{X}{r}{d} \subs \modulistack{X}{r}{d}$. As explained in the introduction, our goal is to prove that the proper algebraic space~$\modulispace[\semistable]{X}{r}{d}$ obtained in Theorem~\ref{theorem:good-moduli-space} is actually a projective scheme. For this we need to produce an ample line bundle on it, and the determinantal line bundle we have constructed a priori lives on~$\modulistack[\semistable]{X}{r}{d}$.
To remedy this with the following. 

\begin{proposition}
  The determinantal line bundle~$\mathcal{L}_V$ of Proposition \ref{proposition:determinantal-line-bundle-properties} associated to a vector bundle~$V$ descends to~$\modulispace[\semistable]{X}{r}{d}$, that is, there exists a unique line bundle~$\mathrm{L}_V\in\Pic(\modulispace[\semistable]{X}{r}{d})$ such that~$\mathcal{L}_V\cong\phi^*\mathrm{L}_V$.
\end{proposition}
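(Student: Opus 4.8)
The plan is to use the descent criterion for line bundles along good moduli space morphisms, which is precisely part (iv) of Theorem~\ref{theorem:gms-properties}. That theorem says the pullback $\phi^*$ gives an equivalence between line bundles on $\modulispace[\semistable]{X}{r}{d}$ and those line bundles on $\modulistack[\semistable]{X}{r}{d}$ for which the stabilizer of every closed point acts trivially on the fiber. So the entire task reduces to a single verification: check that for every closed point of $\modulistack[\semistable]{X}{r}{d}$, its automorphism group acts trivially on the fiber of $\mathcal{L}_V$. Once this is established, the existence and uniqueness of $\mathrm{L}_V$ with $\mathcal{L}_V\cong\phi^*\mathrm{L}_V$ follow immediately from the equivalence of categories, since $\phi^*$ being an equivalence means it is in particular fully faithful and essentially surjective onto the relevant subcategory.

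First I would recall from Lemma~\ref{lemma:topology-of-semistable-stack}(ii) that the closed points of $\modulistack[\semistable]{X}{r}{d}$ are exactly the polystable bundles, so it suffices to analyze a polystable $E=\bigoplus_i E_i^{\oplus m_i}$ with the $E_i$ stable of slope $d/r$. The automorphism group of such a point is $\Aut(E)\cong\prod_i\GL_{m_i}(k)$, using that $\operatorname{End}(E_i)\cong k$ by Lemma~\ref{lemma:stable-endomorphisms-field}. The key structural input is that the determinantal construction is functorial, so an automorphism $g\in\Aut(E)$ induces an automorphism of the two-term complex $\mathbf{R}\Gamma(X,E\otimes V)$, and hence acts on the fiber $(\mathcal{L}_V)_{[E]}=\det(K^0)\otimes\det(K^1)^\vee$ by the ratio of determinants $\det(g\mid K^0)/\det(g\mid K^1)$. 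Since the complex computes $E\otimes V$, this scalar is the alternating product of the determinants of $g$ acting on $\HH^0(X,E\otimes V)$ and $\HH^1(X,E\otimes V)$.

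The heart of the computation is therefore to show that the element $\det(g\mid \HH^0)\cdot\det(g\mid \HH^1)^{-1}$ equals $1$. The cleanest way is to reduce to the diagonalizable case: because $\prod_i\GL_{m_i}$ is generated by its scalar tori and these are connected, it suffices to check triviality on the central $\mathbb{G}_{\mathrm{m}}^{\times(\#\{i\})}$ of scalars on each isotypic block, and then extend to the full group by connectedness together with the fact that the action is through a character (a homomorphism $\Aut(E)\to\mathbb{G}_{\mathrm{m}}$, which on $\GL_{m_i}$ must be a power of the determinant). For a scalar $\lambda_i\in\mathbb{G}_{\mathrm{m}}$ acting on the $E_i$-isotypic summand, it acts on both $\HH^0$ and $\HH^1$ of $E_i^{\oplus m_i}\otimes V$ by the same scalar $\lambda_i$ on each cohomology group, so its contribution to the character is $\lambda_i^{\hh^0-\hh^1}=\lambda_i^{\chi(X,E_i\otimes V)\cdot m_i}$, and by the hypothesis $\chi(X,E\otimes V)=0$ combined with the numerical condition \eqref{equation:riemann-roch-condition} this Euler characteristic vanishes on each stable constituent of the correct slope. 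Thus each scalar acts trivially, and since the character is determined by its restriction to the center of each $\GL_{m_i}$ factor (any character of $\GL_m$ being a power of $\det$, hence controlled by scalars), the whole group acts trivially.

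I expect the main obstacle to be the bookkeeping in the last step: verifying that $\chi(X,E_i\otimes V)=0$ for each \emph{individual} stable summand $E_i$, not merely for $E$ itself. This requires that every stable constituent $E_i$ appearing in a polystable bundle of rank $r$ and degree $d$ has slope exactly $d/r$, so that $\chi(X,E_i\otimes V)=\rk(E_i)\cdot\bigl(\mu(E_i)\rk(V)+\deg(V)+\rk(V)(1-g)\bigr)$ vanishes by the same Riemann--Roch relation \eqref{equation:riemann-roch-condition} that forces $\chi(X,E\otimes V)=0$; this slope equality is exactly the content of the Jordan--H\"older description. Granting this, the character computation collapses and descent applies verbatim. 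The argument for $\modulistack[\semistable]{X}{r}{L}$ is identical, replacing $\Aut$ by the subgroup preserving the determinant, which only shrinks the group and hence preserves triviality of the action.
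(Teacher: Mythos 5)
Your proposal is correct and follows essentially the same route as the paper: reduce via Theorem~\ref{theorem:gms-properties}(iv) to checking that stabilizers of closed points act trivially, identify closed points with polystable bundles $E=\bigoplus_j E_j^{\oplus m_j}$ via Lemma~\ref{lemma:topology-of-semistable-stack}(ii), and compute that $(g_1,\ldots,g_n)\in\prod_j\GL_{m_j}$ acts on the fiber by $\prod_j\det(g_j)^{\chi(X,E_j\otimes V)}$, which is trivial because each $E_j$ has slope $d/r$ so that $\chi(X,E_j\otimes V)=0$. One harmless slip: $\prod_j\GL_{m_j}$ is \emph{not} generated by its scalar tori, but your parallel observation that the action is through a character and every character of $\GL_m$ is a power of $\det$ (hence determined by its restriction to scalars) makes the reduction valid, and in fact the paper avoids this detour by computing the action of a general element directly.
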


\begin{proof}
  By Theorem \ref{theorem:gms-properties}(iv), we must show that stabilizers of $\modulistack[\semistable]{X}{r}{d}$ act trivially on the fibers of $\mathcal{L}_V$. By Theorem \ref{theorem:good-moduli-space}(ii), the closed points of $\modulistack[\semistable]{X}{r}{d}$ correspond to polystable bundles, and for a polystable bundle $E = \bigoplus_{j=1}^n E_j^{\oplus m_j}$, where the $E_i$ are pairwise nonisomorphic, we have $\Aut(E) \cong \GL_{m_1} \times \cdots \times \GL_{m_n}$. The fiber of $\mathcal{L}_V|_{[E]}$ is identified with
  \[ \det \mathbf{R}\Gamma(X, E \otimes V) = \prod_{i=1}^n (\det \HH^i(X, E \otimes V))^{\otimes (-1)^i} . \]
  An element $(g_1,\ldots,g_n) \in \Aut(E)$ acts on
  \[ \det\HH^i(X, E \otimes V) \cong \bigotimes_{j=1}^n \left(\det\HH^i(X, E_j \otimes V)\right)^{\otimes m_j} \]
  by multiplication with $\prod_{j=1}^n \det(g_1)^{\dim\HH^i(X, E_j \otimes V)}$, and thus on $\mathcal{L}_V|_{[E]}$ by multiplication with $\prod_{j=1}^n \det(g_j)^{\chi(X, E_j \otimes V)}$. But each $E_j$ has slope equal to $d/r$, so by the assumption on $V$ we have $\chi(X, E_j \otimes V) = 0$, and so the action is trivial.
\end{proof}

\begin{corollary}
  For a line bundle $L$ of degree $d$ on $X$, the restriction of the determinantal line bundle $\mathrm{L}_V$ to $\modulispace[\semistable]{X}{r}{L}$ only depends on the rank and degree of $V$.
\end{corollary}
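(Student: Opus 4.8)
The plan is to reduce the statement to the multiplicativity result of Proposition~\ref{proposition:donaldson-morphism}(ii) together with the injectivity of pullback along a good moduli space. First I would record the relevant commutative square. The induced morphism of stacks $\iota\colon\modulistack[\semistable]{X}{r}{L}\to\modulistack[\semistable]{X}{r}{d}$ gives, by the initiality of good moduli spaces in Theorem~\ref{theorem:gms-properties}(i), a morphism $\bar\iota\colon\modulispace[\semistable]{X}{r}{L}\to\modulispace[\semistable]{X}{r}{d}$ fitting into a square with the two good moduli space maps $\phi$ and $\phi_L$. By definition the restriction of $\mathrm{L}_V$ to $\modulispace[\semistable]{X}{r}{L}$ is $\bar\iota^*\mathrm{L}_V$, and commutativity of the square together with $\mathcal{L}_V\cong\phi^*\mathrm{L}_V$ yields $\phi_L^*\bar\iota^*\mathrm{L}_V\cong\iota^*\mathcal{L}_V$.

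Next, given vector bundles $V$ and $W$ of equal rank and degree, Proposition~\ref{proposition:donaldson-morphism}(ii) supplies a line bundle $\mathcal{N}$ on $\stPic_X^d$ with $\mathcal{L}_W\cong\mathcal{L}_V\otimes\det^*\mathcal{N}$. The key observation is that this twist trivializes upon restriction to the fixed-determinant substack. Indeed, by the cartesian square~\eqref{eq:cartesian square over det morphism} the composite $\det\circ\,\iota$ factors through the point as $\modulistack[\semistable]{X}{r}{L}\xrightarrow{c}\Spec k\xrightarrow{[L]}\stPic_X^d$, where $c$ is the structure morphism. Hence $\iota^*\det^*\mathcal{N}\cong c^*[L]^*\mathcal{N}$, and since $[L]^*\mathcal{N}$ is a line bundle on $\Spec k$ it is trivial, so $\iota^*\det^*\mathcal{N}$ is trivial as well. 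Therefore $\iota^*\mathcal{L}_W\cong\iota^*\mathcal{L}_V$. The geometric content here is simply that fixing the determinant kills the $\stPic_X^d$-ambiguity recorded by $\mathcal{N}$.

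Finally, combining the two steps gives $\phi_L^*\bar\iota^*\mathrm{L}_W\cong\iota^*\mathcal{L}_W\cong\iota^*\mathcal{L}_V\cong\phi_L^*\bar\iota^*\mathrm{L}_V$ on the stack, and it remains to descend this isomorphism to $\modulispace[\semistable]{X}{r}{L}$. For this I would invoke the injectivity of $\phi_L^*\colon\Pic(\modulispace[\semistable]{X}{r}{L})\to\Pic(\modulistack[\semistable]{X}{r}{L})$. This follows from the good moduli space identity $\mathcal{O}\xrightarrow{\sim}(\phi_L)_*\mathcal{O}$ and the projection formula, which give $(\phi_L)_*\phi_L^*\mathcal{M}\cong\mathcal{M}$ for any line bundle $\mathcal{M}$ on the target (equivalently, one may use the full faithfulness of $\phi_L^*$ on vector bundles with trivial stabilizer action from Theorem~\ref{theorem:gms-properties}(iv)). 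Applying $(\phi_L)_*$ to the isomorphism above then yields $\bar\iota^*\mathrm{L}_W\cong\bar\iota^*\mathrm{L}_V$, which is the assertion. I expect the only genuinely delicate point to be the bookkeeping around the functoriality of good moduli spaces and the injectivity of $\phi_L^*$; the heart of the matter—the trivialization of $\det^*\mathcal{N}$ after fixing the determinant—is immediate from the cartesian square.
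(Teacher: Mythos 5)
Your proposal is correct and follows essentially the same route as the paper: both use Proposition~\ref{proposition:donaldson-morphism}(ii) to write $\mathcal{L}_W \cong \mathcal{L}_V \otimes \operatorname{det}^*\mathcal{N}$, observe via the cartesian square \eqref{eq:cartesian square over det morphism} that $\operatorname{det}^*\mathcal{N}$ trivializes on $\modulistack[\semistable]{X}{r}{L}$, and then descend the resulting isomorphism through the good moduli space map. Your projection-formula justification for injectivity of $\phi_L^*$ on Picard groups is a valid (and essentially equivalent) substitute for the paper's direct appeal to Theorem~\ref{theorem:gms-properties}(iv), which you also mention.
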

\begin{proof}
  We have commuting diagrams
  \begin{center}
  \begin{tikzcd}
    \modulistack[\semistable]{X}{r}{L} \arrow[r] \arrow[d] \arrow[rd, phantom, "\square"] & \modulistack[\semistable]{X}{r}{d} \arrow[d, "\det"]  \\
    \Spec k \arrow[r, "{[L]}"] & \stPic_X^d
  \end{tikzcd}
  and
  \begin{tikzcd}
    \modulistack[\semistable]{X}{r}{L} \arrow[r] \arrow[d] \arrow[rd, phantom] & \modulistack[\semistable]{X}{r}{d} \arrow[d] \\
    \modulispace[\semistable]{X}{r}{L} \arrow[r, hook] & \modulispace[\semistable]{X}{r}{d}.
  \end{tikzcd}
  \end{center}
  where in the second square, both vertical arrows are good moduli space maps. If $V$ and $W$ are vector bundles of equal rank and degree on $X$, both satisfying condition \eqref{equation:riemann-roch-condition}, then by Proposition \ref{proposition:determinantal-line-bundle-properties}(ii), there exists a line bundle $\mathcal{N}$ on $\stPic_X^d$ such that $\mathcal{L}_W \cong \mathcal{L}_V \otimes \operatorname{det}^*\mathcal{N}$. The left diagram shows that, restricting to $\modulistack[\semistable]{X}{r}{L}$, this isomorphism becomes $\mathcal{L}_W \cong \mathcal{L}_V$. The right diagram now shows that the restrictions of $\mathrm{L}_V$ and $\mathrm{L}_W$ to $\modulispace[\semistable]{X}{r}{L}$ become isomorphic after pulling back to $\modulistack[\semistable]{X}{r}{L}$, so the restrictions must be isomorphic to begin with, by Theorem \ref{theorem:gms-properties}(iv).
\end{proof}


Using geometric invariant theory, Dr\'ezet--Narasimhan gave in \cite[Th\'eor\`emes B, C]{MR0999313} the following description of the Picard groups of the good moduli spaces.
\begin{theorem}[Dr\'ezet--Narasimhan]
  There exist isomorphisms
  \begin{equation}
    \begin{aligned}
      \Pic(\modulispace[\semistable]{X}{r}{d})&\cong\Pic(\Pic_X^d)\oplus\mathbb{Z}, \\
      \Pic(\modulispace[\semistable]{X}{r}{L})&\cong\mathbb{Z}. \\
    \end{aligned}
  \end{equation}
  In the second line~$\mathbb{Z}$ is generated by the determinantal line bundle~$\mathrm{L}_V$, where~$V$ is chosen to be of minimal rank.
\end{theorem}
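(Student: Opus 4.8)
The plan is to realize $\Pic$ of the good moduli space as the descended part of $\Pic$ of the stack, and to pin down generators using the determinantal classes of Section~\ref{section:determinantal}. By Theorem~\ref{theorem:gms-properties}(iv), pullback along the good moduli space map $\phi$ identifies $\Pic(\modulispace[\semistable]{X}{r}{d})$ with the subgroup of $\Pic(\modulistack[\semistable]{X}{r}{d})$ consisting of those classes on which the stabilizer of every closed point acts trivially; this is the stack-theoretic form of Kempf's descent lemma, and the relevant weight computation is already carried out in the proof that $\mathcal{L}_V$ descends. So the problem splits into computing $\Pic$ of the \emph{stack} and then intersecting with the descent condition, which by that weight computation selects exactly the determinantal classes $\mathcal{L}_V$ with $\chi(X,E\otimes V)=0$.

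To compute $\Pic(\modulistack[\semistable]{X}{r}{d})$, I would first replace the semistable stack by the stack $\modulistack{X}{r}{d}$ of \emph{all} bundles. Both stacks are smooth (Theorem~\ref{theorem:properties}), and for $g\geq 2$ the unstable complement has codimension at least~$2$: the Harder--Narasimhan stratum of type $((r_i,d_i))_i$ with $\mu_1>\cdots>\mu_n$ has codimension $\sum_{i<j}r_ir_j(\mu_i-\mu_j+g-1)$, which for $g\geq 2$ is at least $2$ whenever the stratum is nontrivial. Restriction therefore induces $\Pic(\modulistack[\semistable]{X}{r}{d})\xrightarrow{\sim}\Pic(\modulistack{X}{r}{d})$, and similarly in the fixed-determinant case. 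The advantage is that $\modulistack{X}{r}{d}$ fibers over $\stPic_X^d$ via $\det$, with fibers the fixed-determinant stacks $\modulistack{X}{r}{L}$; combined with the gerbe identification $\Pic(\stPic_X^d)\cong\Pic(\Pic_X^d)\oplus\mathbb{Z}$, the computation reduces to showing $\Pic(\modulistack{X}{r}{L})\cong\mathbb{Z}$, generated by a determinantal (theta) line bundle, and that the homomorphism $\Kzero(X)\to\Pic$ of Proposition~\ref{proposition:donaldson-morphism} hits the generator.

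The second isomorphism then follows by combining $\Pic(\modulistack{X}{r}{L})\cong\mathbb{Z}$ with the descent condition: among the $\mathcal{L}_V$ the descending ones are precisely those with $\chi(X,E\otimes V)=0$, equation~\eqref{equation:riemann-roch-condition}, and the smallest rank for which this can be solved in integers is $r/\gcd(r,d)$, so $\mathrm{L}_V$ for such a minimal $V$ generates $\Pic(\modulispace[\semistable]{X}{r}{L})\cong\mathbb{Z}$. For the first isomorphism, the determinant morphism descends to a morphism $\modulispace[\semistable]{X}{r}{d}\to\Pic_X^d$; pullback gives the summand $\Pic(\Pic_X^d)$, the determinantal class $\mathrm{L}_V$ restricts to the generator on each fibre $\modulispace[\semistable]{X}{r}{L}$ and so spans a complementary $\mathbb{Z}$, and restriction to a fibre shows the sum is direct.

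The main obstacle is the intrinsic computation $\Pic(\modulistack{X}{r}{L})\cong\mathbb{Z}$ --- equivalently the \emph{surjectivity} statement that determinantal classes exhaust the Picard group of the stack, with no extra classes and with index exactly one, so that the generator is $\mathrm{L}_V$ itself rather than a proper multiple. The concrete route, which is essentially the content of \cite{MR0999313}, is to use the boundedness of Proposition~\ref{proposition:boundedness-of-semistability} to present $\modulistack[\semistable]{X}{r}{d}$ as a global quotient $[R^{\semistable}/\GL_N]$ with $R^{\semistable}$ an open locus in a Quot scheme whose complement again has high codimension; then $\Pic$ of the stack is an extension of $\Pic(R^{\semistable})$ by the character lattice of $\GL_N$, and one matches the equivariant classes with determinantal line bundles while tracking the weights needed for the index computation. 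Controlling these codimensions and the character bookkeeping is the delicate step; once it is in place, the descent argument and the fibration over $\stPic_X^d$ assemble the two stated isomorphisms formally.
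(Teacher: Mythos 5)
Before addressing the proof itself: the paper does not prove this theorem at all --- it quotes Th\'eor\`emes B and C of Dr\'ezet--Narasimhan \cite{MR0999313} as a classical GIT-based result and uses the statement only to explain how $\mathrm{L}_V$ depends on the invariants of $V$. So there is no proof in the paper to compare against; your proposal is attempting a proof that the paper deliberately outsources. Much of your skeleton is sound: Theorem \ref{theorem:gms-properties}(iv) does identify $\Pic(\modulispace[\semistable]{X}{r}{d})$ with the subgroup of $\Pic(\modulistack[\semistable]{X}{r}{d})$ of classes on which stabilizers of closed (i.e.\ polystable) points act trivially; the Harder--Narasimhan codimension count $\sum_{i<j}r_ir_j(\mu_i-\mu_j+g-1)$, in which each term is at least $1+(g-1)\geq 2$ for $g\geq 2$, correctly gives $\Pic(\modulistack[\semistable]{X}{r}{d})\cong\Pic(\modulistack{X}{r}{d})$; and the minimal rank solving \eqref{equation:riemann-roch-condition} is indeed $r_1=r/\gcd(r,d)$.

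The genuine gap is exactly where you locate it, and it is not a deferrable technicality: the statement that $\Pic(\modulistack{X}{r}{L})\cong\mathbb{Z}$ with a determinantal class as generator --- equivalently, that determinantal bundles \emph{exhaust} the Picard group, with no exotic classes and with index exactly one --- is never established. Your proposed route is a Quot-scheme presentation whose execution you describe as ``essentially the content of \cite{MR0999313}''; since that exhaustion-and-index statement \emph{is} the theorem being proven, this is circular. Everything else in your argument (descent, the codimension-$2$ reduction, the fibration over $\stPic_X^d$) only transports this core fact between the stack, the good moduli space, and the fixed-determinant fibre; none of it produces the fact. Closing the gap requires actually carrying out one of the known arguments: the GIT computation of Dr\'ezet--Narasimhan (codimension estimates on the bad loci of the Quot scheme, computation of its equivariant Picard group, and the weight bookkeeping for the index), or a uniformization argument in the style of Laszlo--Sorger identifying line bundles on the fixed-determinant stack with characters of a central extension of the loop group. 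Two smaller unproved steps compound this: the splitting $\Pic(\modulistack{X}{r}{d})\cong\Pic(\stPic_X^d)\oplus\mathbb{Z}$ needs a seesaw-type argument over the Picard stack, not merely the existence of the determinant fibration and of the class $\mathcal{L}_V$; and your claim that the descent condition selects \emph{exactly} the determinantal classes with $\chi(X,E\otimes V)=0$ already presupposes the surjectivity statement at issue, since one cannot compute stabilizer weights on classes one has not shown to exist in determinantal form.
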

This explains how the isomorphism type of the line bundle $\mathrm{L}_V$ on the good moduli space depends on the invariants of the vector bundle $V$ satisfying $\chi(X,E\otimes V) = 0$, building upon Proposition \ref{proposition:donaldson-morphism}.

\section{Projectivity}
\label{section:projectivity}
In this section we will prove the following result.

\begin{theorem}
  \label{theorem:projectivity}
  Let~$X$ be a smooth projective curve of genus~$g\geq 2$.
  The good moduli spaces~$\modulispace[\semistable]{X}{r}{d}$ and~$\modulispace[\semistable]{X}{r}{L}$ are projective varieties of dimension~$r^2(g-1)+1$ and~$(r^2-1)(g-1)$ respectively.
\end{theorem}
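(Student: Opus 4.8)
The plan is to exhibit an ample line bundle on each of the proper algebraic spaces $\modulispace[\semistable]{X}{r}{d}$ and $\modulispace[\semistable]{X}{r}{L}$, since the remaining assertions (irreducibility, properness, and the dimension formulas) were already recorded in Theorem \ref{theorem:good-moduli-space}. A proper algebraic space over a field that carries an ample line bundle is automatically a projective scheme, so ampleness is the only missing ingredient. The candidate is the descended determinantal line bundle $\mathrm{L}_V$ of the previous section for a carefully chosen test bundle $V$, and the argument I would run is that of Faltings \cite{MR1211997} in the refined form of Esteves--Popa \cite{MR2068965}.

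First I would fix the numerical type of $V$: choosing $\rk(V)$ and $\deg(V)$ so that $\mu(V) = g - 1 - d/r$ forces $\chi(X, E \otimes V) = 0$ for every $[E] \in \modulistack[\semistable]{X}{r}{d}$, which is exactly condition \eqref{equation:riemann-roch-condition}. By Proposition \ref{proposition:donaldson-morphism} the isomorphism class of $\mathrm{L}_V$ then depends only on $(\rk V, \det V)$, whereas the section $s_V$ genuinely depends on $V$. Letting $V$ range over all bundles of this fixed numerical type thus produces a whole family of sections of (isomorphic copies of) a single line bundle $\mathrm{L}$ on the good moduli space, and by Proposition \ref{proposition:determinantal-line-bundle-properties} the section attached to $V$ is nonzero at $[E]$ precisely when $E \otimes V$ is cohomology-free.

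The heart of the proof, and the step I expect to be the main obstacle, is the Faltings criterion translating semistability into the existence of such test bundles. Concretely I would establish two things. For \textbf{base-point freeness}: every semistable $E$ admits at least one $V$ of the prescribed type with $E \otimes V$ cohomology-free, so by Proposition \ref{proposition:determinantal-line-bundle-properties} the sections $s_V$ have no common zero on the semistable locus; finitely many suffice by quasi-compactness, and they globally generate $\mathrm{L}$, defining a morphism $\phi$ to a projective space. For \textbf{separation of points}: whenever $E$ and $E'$ are semistable but not S-equivalent there is a $V$ with $E \otimes V$ cohomology-free while $E' \otimes V$ is not (or vice versa), so that $\phi$ separates the corresponding points of $\modulispace[\semistable]{X}{r}{L}$. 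Both statements rest on producing test bundles with prescribed cohomological behaviour; this is where the hypothesis $g \geq 2$ and the full force of \cite{MR2068965} enter, and the existence and effectivity of these bundles, argued by an induction on the rank exploiting the Jordan--H\"older structure, is the genuinely hard part. Granting them, $\phi$ is injective on closed points; since $\modulispace[\semistable]{X}{r}{L}$ is proper, $\phi$ is proper and quasi-finite, hence finite, and therefore $\mathrm{L} = \phi^*\mathcal{O}(1)$ is ample. This proves projectivity of $\modulispace[\semistable]{X}{r}{L}$.

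Finally I would deduce the varying-determinant case from the fixed one via the determinant morphism $\det\colon \modulispace[\semistable]{X}{r}{d} \to \Pic_X^d$, whose fibre over $[L]$ is $\modulispace[\semistable]{X}{r}{L}$. Since $\mathrm{L}_V$ restricts to the ample bundle $\mathrm{L}$ on every fibre, it is relatively ample over $\Pic_X^d$; as $\Pic_X^d$ is projective, choosing an ample line bundle $A$ on it and a sufficiently large $n$, the twist $\mathrm{L}_V \otimes \det^* A^{\otimes n}$ is ample on $\modulispace[\semistable]{X}{r}{d}$ by the standard relative-ampleness criterion. This twisting is forced, since by the Dr\'ezet--Narasimhan theorem $\mathrm{L}_V$ is trivial along the base direction. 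Hence $\modulispace[\semistable]{X}{r}{d}$ is projective as well, with the dimensions carried over from Theorem \ref{theorem:good-moduli-space}.
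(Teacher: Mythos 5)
Your global architecture --- reduce to the fixed-determinant case via $\det\colon \modulispace[\semistable]{X}{r}{d} \to \Pic_X^d$ and fiberwise ampleness, produce a morphism $\phi\colon\modulispace[\semistable]{X}{r}{L}\to\mathbb{P}^N$ from determinantal sections, and conclude by pulling back $\mathcal{O}(1)$ along a finite morphism --- is the same as the paper's. The genuine gap is your separation-of-points step: the statement you propose to prove is false. You ask for the following: whenever $E$ and $E'$ are semistable and not S-equivalent, there is a $V$ of the prescribed numerical type with $E\otimes V$ cohomology-free and $E'\otimes V$ not (or vice versa). Take $E = E_1\oplus E_1\oplus E_2$ and $E' = E_1\oplus E_2\oplus E_2$, where $E_1\not\cong E_2$ are stable of the same rank and degree; these are polystable, hence closed points of the moduli space, and they are not S-equivalent. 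Since $\chi(X, V\otimes E_i)=0$ for each summand, the bundle $E\otimes V$ is cohomology-free if and only if $\HH^0(X,V\otimes E_1)=\HH^0(X,V\otimes E_2)=0$, which is also exactly the condition for $E'\otimes V$ to be cohomology-free. So every section $s_V$ vanishes at $[E]$ if and only if it vanishes at $[E']$: no determinantal section can separate these two points, and $\phi$ cannot be shown injective on closed points by this route (this is precisely the content of the remark following Proposition~\ref{proposition:separates-points} in the paper).

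The repair, and what the paper actually does, is to prove only a weaker separation statement (Proposition~\ref{proposition:separates-points}): if $E$ and $E'$ are polystable and $E$ has a stable summand that does not occur in $E'$ at all, then some stable $V$ satisfies $\HH^0(X,V\otimes E)\neq 0$ and $\HH^0(X,V\otimes E')=0$. This does not give injectivity, but it does give quasi-finiteness of $\phi$: the fiber through a polystable $[E]$ is contained in the finite set of polystable bundles assembled, with arbitrary multiplicities, from exactly the same collection of stable summands as $E$. From there your endgame is correct and identical to the paper's: proper plus quasi-finite implies finite, finite morphisms are affine, and the pullback of an ample line bundle along a finite morphism is ample. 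Your base-point-freeness step and the reduction of $\modulispace[\semistable]{X}{r}{d}$ to $\modulispace[\semistable]{X}{r}{L}$ agree with Proposition~\ref{proposition:semiample} and the beginning of the paper's Section~\ref{section:projectivity}; but the one claim you flagged as the heart of the argument is the one that must be weakened, since as you stated it, it is impossible to prove.
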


We begin by reducing the projectivity of $\modulispace[\semistable]{X}{r}{d}$ to that of $\modulispace[\semistable]{X}{r}{L}$. The determinant morphism
\[ \det\colon \modulistack[\semistable]{X}{r}{d} \to \stPic_X^d \]
discussed above descends to a map
\[ \det\colon \modulispace[\semistable]{X}{r}{d} \to \Pic_X^d \]
of good moduli spaces whose fiber over $[L] \in \Pic_X^d$ is the moduli space $\modulispace[\semistable]{X}{r}{L}$. It is a classical fact that $\Pic_X^d$ is a projective variety isomorphic to the Jacobian of $X$. Moreover, since $\modulispace[\semistable]{X}{r}{d}$ is a proper algebraic space, to conclude that it is a projective scheme, it suffices by \cite[\spref{0D36}]{stacks-project} to produce a line bundle on it whose restriction to each fiber $\modulispace[\semistable]{X}{r}{L}$ is ample.
We will show that the determinantal line bundle~$\mathrm{L}_V$ will work for an arbitrary vector bundle $V$ satisfying condition \eqref{equation:riemann-roch-condition} below.


From now on, we will fix a line bundle $L$ of degree $d$ on $X$ and focus on the moduli space $\modulispace[\semistable]{X}{r}{L}$.
\paragraph{Setup}
Denote $h = \gcd(r,d)$ and set $r_1 = r/h$ and $d_1 = d/h$. Let~$V$ be a vector bundle on~$X$, with invariants
\begin{equation}
  \label{equation:conditions-on-V}
  \rk(V) = m r_1, \quad \deg(V) = m r_1(g-1) - m d_1
\end{equation}
for some~$m\geq 1$.
By the Riemann--Roch theorem this ensures that~$\chi(X,V\otimes E)=0$ for a vector bundle~$E$ of rank~$r$ and degree~$d$. Hence by Section \ref{section:determinantal} the associated determinantal line bundle~$\mathrm{L}_V$ on~$\modulispace[\semistable]{X}{r}{L}$ depends only on $\rk(V)$ and $\deg(V)$, and comes with a section~$s_V$, depending on~$V$, such that~$s_V([E]) \neq 0$ if and only if~$V\otimes E$ is cohomology-free.

We will prove the theorem using the following two propositions.

\begin{proposition}
  \label{proposition:semiample}
  Let $E$ be a stable vector bundle of rank $r$ and degree $d$. For large enough $m$, a general vector bundle $V$ with
  \[
    \rk(V) = m r_1, \quad \deg(V) = m r_1 (g - 1) - m d_1
  \]
  satisfies
  \begin{equation}
    \HH^0(X, V \otimes E) = \HH^1(X, V \otimes E) = 0.
  \end{equation}
  In particular, the determinantal line bundle~$\mathrm{L}_V$ on $\modulispace[\semistable]{X}{r}{L}$ is \emph{semiample}.
\end{proposition}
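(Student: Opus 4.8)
\emph{Strategy.} The statement has two halves --- the cohomological vanishing for a fixed stable $E$, and the global consequence for $\mathrm{L}_V$ --- and I would prove them in that order.

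\emph{Reductions.} Write $\delta=mr_1(g-1)-md_1$, so that $\rk(V)=mr_1$ and $\deg(V)=\delta$. The numerics \eqref{equation:conditions-on-V} are exactly the condition $\chi(X,V\otimes E)=0$, so the two vanishings are equivalent and it suffices to arrange $\HH^0(X,V\otimes E)=0$. To give ``general $V$'' a meaning I would take the parameter space to be the moduli stack $\modulistack[\semistable]{X}{mr_1}{\delta}$, which is irreducible by Proposition \ref{prop:irreducibility} and bounded by Proposition \ref{proposition:boundedness-of-semistability}. On a bounded family the function $[V]\mapsto\HH^0(X,V\otimes E)$ is upper semicontinuous, so its vanishing locus is open; by irreducibility this locus is dense as soon as it is nonempty. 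The whole first half therefore reduces to producing a \emph{single} semistable $V$ with the prescribed invariants such that $V\otimes E$ is cohomology-free.

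\emph{Existence.} Here $\mu(V)=(g-1)-d/r$, hence $\mu(V\otimes E)=g-1$, the slope at which $\chi$ vanishes. A useful reduction is that if $V$ carries a filtration whose graded pieces $W_i$ are semistable of slope $(g-1)-d/r$ with $\HH^0(X,W_i\otimes E)=0$, then left-exactness of $\HH^0$ forces $\HH^0(X,V\otimes E)=0$; in particular one cohomology-free $W$ of rank $r_1$ would give $V=W^{\oplus m}$ for all $m$. The naive attempt to take the $W_i$ to be line bundles $M$ of degree $<-d/r$, where $\mu(M\otimes E)<0$ makes $\HH^0(X,M\otimes E)=0$ automatic by Lemma \ref{lemma:no-morphisms}, fails: since $g\ge 2$ the forced average degree $(g-1)-d/r$ strictly exceeds the threshold $-d/r$, so the pieces cannot all have negative-slope twists. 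One is thus compelled to use pieces for which slope alone gives no vanishing and to choose them \emph{generically}, which is exactly the assertion that the generalized theta locus
\[
  \Theta_E=\{\,[V]:\HH^0(X,V\otimes E)\neq 0\,\}
\]
is a \emph{proper} closed subset and not the whole moduli space. \textbf{Proving $\Theta_E\neq\modulistack[\semistable]{X}{mr_1}{\delta}$ for $m$ large --- equivalently the existence of a cohomology-free twist --- is the genuinely hard input, due to Faltings and streamlined by Esteves--Popa, and I expect it to be the main obstacle.} Because the stable $E$ of rank $r$ and degree $d$ themselves form a bounded family (Proposition \ref{proposition:boundedness-of-semistability}), I would extract from this a single $m_0$ that works uniformly for all of them.

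\emph{Semiampleness.} By Proposition \ref{proposition:determinantal-line-bundle-properties} the section $s_V$ satisfies $s_V([E])\neq 0$ precisely when $V\otimes E$ is cohomology-free, and by Proposition \ref{proposition:donaldson-morphism} (additivity of $V\mapsto\mathcal{L}_V$ on $\Kzero(X)$) all the $s_{V'}$ with $\rk(V')=mr_1$ and $\deg(V')=\delta$ are sections of one and the same line bundle $\mathrm{L}_V$. To see that this line bundle is globally generated --- hence semiample --- I must produce, at every closed point of $\modulispace[\semistable]{X}{r}{L}$, some such $V'$ with $s_{V'}\neq 0$. By Theorem \ref{theorem:good-moduli-space} the closed points are the polystable bundles $E=\bigoplus_i E_i^{\oplus n_i}$ with each $E_i$ stable of slope $d/r$, and $V'\otimes E$ is cohomology-free if and only if every $V'\otimes E_i$ is; each of these is a dense open condition on $V'$ by the previous steps, so a finite intersection stays nonempty and a good $V'$ exists. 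Fixing the uniform $m_0$ puts all these sections in a single line bundle, and quasi-compactness of the proper space $\modulispace[\semistable]{X}{r}{L}$ lets me select finitely many $s_{V'}$ with empty common zero locus. Thus $\mathrm{L}_V$ is base-point-free, in particular semiample.
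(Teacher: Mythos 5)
Your scaffolding is correct and, in fact, mirrors the paper's organization of the same steps: since \eqref{equation:conditions-on-V} gives $\chi(X,V\otimes E)=0$, the two vanishings are equivalent; the locus of $V$ with $\HH^0(X,V\otimes E)=0$ is open by semicontinuity and dense once nonempty by irreducibility (Proposition \ref{prop:irreducibility}); and semiampleness follows by intersecting, at each polystable point $E=\bigoplus_i E_i^{\oplus n_i}$ of $\modulispace[\semistable]{X}{r}{L}$, the finitely many dense open conditions attached to the stable summands $E_i$ and invoking Proposition \ref{proposition:determinantal-line-bundle-properties}, with quasi-compactness then giving base-point freeness of a suitable power.

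However, there is a genuine gap at the core: the existence of even one semistable $V$ of the prescribed invariants with $\HH^0(X,V\otimes E)=0$ --- the properness of your locus $\Theta_E$ --- is exactly the nontrivial content of the proposition, and you do not prove it; you defer it to ``Faltings, streamlined by Esteves--Popa.'' Within the logic of the paper this is circular: nothing established before Section \ref{section:projectivity} supplies that existence, and the Esteves--Popa argument you cite is precisely what the paper is reproducing at this point. The paper fills this gap with Lemma \ref{lem:generic-image-slope} applied with $\eps=0$, a dimension count: any stable $V$ with $\Hom_X(V^\vee,E)\neq 0$ sits in an extension $0\to G\to V^\vee\to F\to 0$ where $F\subseteq E$ is the image of a nonzero map; stability of $V^\vee$ leaves only finitely many numerical types for $F$ and forces $\Hom_X(F,G)=0$; the possible $F$ and $G$ form bounded families (Lemma \ref{lemma:bounded-family}), so such $V^\vee$ are parameterized by projective bundles $\mathbb{P}\Ext_X^1(F,G)$ over a base of controlled dimension, and a Riemann--Roch computation shows the total dimension is strictly less than $(mr_1)^2(g-1)+1$, the dimension of the moduli of stable bundles of rank $mr_1$, once $m\gg 0$; hence a general stable $V$ avoids all these loci. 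Some argument of this kind must be included for your proposal to constitute a proof; as written it establishes only the formal reductions surrounding the main point. (A secondary unproven assertion --- that one $m_0$ works uniformly for all stable $E$ of rank $r$ and degree $d$ --- is harmless, since the quasi-compactness argument you give at the end makes it unnecessary.)
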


\begin{remark}
  In the approach of Faltings this statement (albeit for $\modulispace[\semistable]{X}{r}{d}$) corresponds to \cite[Theorem~I.2]{MR1211997} (note that op.~cit.~is concerned about the more complicated notion of stable Higgs bundles) and \cite[Lemma 3.1]{MR1247504}, but the method of proof will be different.
\end{remark}

\begin{proposition}
  \label{proposition:separates-points}
  Let $E$ and $E'$ be polystable vector bundles of rank $r$ and degree $d$ such that $E$ has a stable summand that is not a summand of $E'$. There exists an integer $m > 0$ and a stable vector bundle $V$ of rank $m r_1$ and degree $m r_1 (g - 1) - m d_1$ such that
  \begin{equation}
    \label{equation:separate-points}
    \HH^0(X, V \otimes E) \neq 0, \quad \HH^0(X, V \otimes E') = 0.
  \end{equation}
  In particular, the sections~$s_V$ of the determinantal line bundle~$\mathrm{L}_V$ on $\modulispace[\semistable]{X}{r}{L}$ \emph{separate points} determined by polystable bundles for which at least one stable summand is different.
\end{proposition}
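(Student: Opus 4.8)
The plan is to translate the desired non-vanishing and vanishing of cohomology into $\Hom$-conditions on the stable summands, and then to build $V$ from an extension whose subbundle is dual to the distinguished summand of $E$.

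Write $E=\bigoplus_j W_j^{\oplus a_j}$ and $E'=\bigoplus_i (W_i')^{\oplus b_i}$ as sums of stable bundles of slope $d/r$, and let $W$ be a stable summand of $E$ with $W\not\cong W_i'$ for every $i$. Since $\HH^0(X,V\otimes G)=\Hom_X(G^\vee,V)$ and $\Hom_X$ is additive, it suffices to produce a stable $V$ of the prescribed invariants with $\Hom_X(W^\vee,V)\neq 0$ and $\Hom_X((W_i')^\vee,V)=0$ for all $i$: the first forces $\HH^0(X,V\otimes E)\neq 0$ and the second forces $\HH^0(X,V\otimes E')=0$. By the assumption $\chi(X,V\otimes E')=0$ the latter also gives $\HH^1(X,V\otimes E')=0$, so $V\otimes E'$ is cohomology-free; by Proposition \ref{proposition:determinantal-line-bundle-properties} we then get $s_V([E])=0$ while $s_V([E'])\neq 0$, and $s_V$ separates the two points.

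The enabling observation is that a nonzero morphism between stable bundles of equal slope is an isomorphism (Lemmas \ref{lemma:no-morphisms} and \ref{lemma:stable-endomorphisms-field}); hence $\Hom_X(W,W_i')=0$, equivalently $\Hom_X((W_i')^\vee,W^\vee)=0$, for every $i$. I would construct $V$ as an extension
\[ 0\to W^\vee\to V\to B\to 0 \]
with $B$ stable chosen so that $V$ has rank $mr_1$ and degree $mr_1(g-1)-md_1$; as $\mu(W^\vee)=-d/r$ is strictly less than $\mu(V)=(g-1)-d/r$ (using $g\geq 2$), this is compatible with $V$ being stable. Tensoring by $W$ and taking cohomology, the identity in $\HH^0(X,W^\vee\otimes W)=\operatorname{End}_X(W)\ni\id_W$ injects into $\HH^0(X,V\otimes W)$, so $\Hom_X(W^\vee,V)\neq 0$ holds \emph{independently} of the extension class. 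Tensoring instead by $W_i'$ and using $\Hom_X((W_i')^\vee,W^\vee)=0$ identifies $\Hom_X((W_i')^\vee,V)$ with the kernel of the connecting map
\[ \partial_e\colon \HH^0(X,B\otimes W_i')\to \HH^1(X,W^\vee\otimes W_i') \]
given by cup product with the extension class $e\in\Ext_X^1(B,W^\vee)$.

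It then remains to choose $m$, the bundle $B$, and the class $e$ so that $V$ is stable and every $\partial_e$ is injective. A Riemann--Roch computation gives $\dim\HH^0(X,B\otimes W_i')=\dim\HH^1(X,W^\vee\otimes W_i')$ precisely when $\HH^1(X,B\otimes W_i')=0$, and under this equality of dimensions one expects a general $e$ to make each $\partial_e$ an isomorphism while, simultaneously, a general extension of a stable bundle by a stable bundle of smaller slope is stable. The main obstacle is exactly this step: showing that a \emph{single} general class $e$ renders all the $\partial_e$ injective at once while keeping $V$ stable requires a genuine dimension count over the space of extensions. Moreover, the needed vanishing $\HH^1(X,B\otimes W_i')=0$ imposes (via Serre duality and stability) a lower bound on $\mu(B)$, forcing $\rk B$ to be small relative to $\rk W^\vee$; reconciling this with the prescribed rank $mr_1$ of $V$ is the delicate numerical heart of the argument, and is most acute when $W$ has the minimal rank $r_1$, in which case the one-step extension likely must be replaced by an iterated extension or by a genericity argument inside the family of all stable bundles of the given invariants.
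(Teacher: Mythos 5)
Your opening reduction is exactly the paper's: pick the distinguishing stable summand $E_0=W$ of $E$ and the summands $E_i=W_i'$ of $E'$, and find a bundle of the prescribed invariants admitting a nonzero map from $E_0^\vee$ but none from any $E_i^\vee$; your identification of $\Hom_X((W_i')^\vee,V)$ with $\ker\partial_e$ is also correct. But the proof as proposed has a genuine gap, and it sits exactly where you flag it: all three ingredients that would make the extension construction work are left unproven, and none of them is routine. First, the vanishing $\HH^1(X,B\otimes W_i')=0$ cannot be obtained from slopes once $m$ is large: the slope criterion (via Serre duality and Lemma~\ref{lemma:no-morphisms}) needs $\mu(B)>2g-2-d_1/r_1$, which unwinds to $\rk(V)<2\rk(W)$, incompatible with $m\gg 0$; as you note, this tension is fatal to the naive version, and resolving it requires a \emph{generic-vanishing} statement of dimension-count type (in fact Lemma~\ref{lem:generic-image-slope}(ii), applied to $B^\vee\otimes\omega_X$ with the negative fudge factor $\eps=-\rk(W)(g-1)$, would do it for a general stable $B$ --- but that is precisely the machinery you were hoping to avoid, and you do not invoke it). Second, even with matching dimensions $\hh^0(X,B\otimes W_i')=\hh^1(X,W^\vee\otimes W_i')$, the simultaneous injectivity of all $\partial_e$ for a \emph{single} generic $e$ is not formal: for fixed nonzero $\psi\in\Hom_X((W_i')^\vee,B)$ the bad locus $\{e:\partial_e(\psi)=0\}$ is linear of codimension equal to the rank of $e\mapsto e\circ\psi$, and when $\psi$ is not injective (its image a lower-rank subsheaf $T\subset B$) that rank can drop below $\hh^0(X,B\otimes W_i')$, so the union of bad loci is not obviously proper; one needs a stratified dimension count over the possible images $T$. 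Third, stability of a general extension of $B$ by $W^\vee$ is itself a nontrivial theorem, not a consequence of $\mu(W^\vee)<\mu(V)$ (split extensions are never stable). So the "delicate numerical heart" you defer is the entire difficulty of the proposition.

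It is worth comparing with how the paper circumvents all of this. Instead of controlling an extension class, the paper takes a \emph{general stable} $V$ of degree one higher than the target, $mr_1(g-1)-md_1+1$, and uses the dimension counts of Lemma~\ref{lem:generic-image-slope} (packaged as Lemma~\ref{lemma:general-stable-surjective}) to get $\dim\Hom_X(E_i^\vee,V)=k_i$ together with surjectivity of the dualized maps. Having fixed one map $\phi\colon E_0^\vee\to V$ with cokernel $Q$, it then performs an \emph{elementary transformation} at a single point $x$: a map $E_i^\vee\to V$ survives into $F=\ker\bigl(V\to Q(x)/H\bigr)$ exactly when its induced $k_i$-plane in $Q(x)$ lies in the hyperplane $H$, and since these $k_i$-planes sweep out a locus $M_i\subseteq\mathrm{Grass}(k_i,Q(x))$ of dimension at most $k_i-1$ while the Schubert condition has codimension $k_i$, Bertini--Kleiman produces an $H$ avoiding every $M_i$. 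The genericity is thus pushed into a finite-dimensional Grassmannian where the count is trivial, and the degree drops by exactly one to the required value. Note also that the bundle $F$ so produced is not shown to be stable, and this does not matter: the conclusion about the sections $s_V$ separating points only uses the cohomological conditions \eqref{equation:separate-points}, so your third obstacle can simply be dropped --- but the first two remain genuine missing steps in your approach.
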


\begin{remark}
  It is not true 
  that for any two polystable bundles $E$ and $E'$ we can find a vector bundle~$V$ of the appropriate rank and degree for which~$\HH^0(X,V\otimes E)\neq 0$ and~$\HH^0(X,V\otimes E')=0$. Namely, consider two polystable vector bundles
  \[
    E=E_1 \oplus E_1 \oplus E_2 \qquad \text{and}\qquad E'=E_1 \oplus E_2 \oplus E_2,
  \]
  where $E_1$ and $E_2$ are nonisomorphic stable vector bundles of the same rank and degree. However, it can be shown that sections of the form $s_V$ span the space of global sections of $\mathrm{L}_V$ for $m \gg 0$ and induce a closed embedding $\modulispace[\semistable]{X}{r}{d}\hookrightarrow\mathbb{P}^N$; see \cite[Corollary~7.15]{MR2299563}. 
\end{remark}

\begin{remark}
  The method of Faltings (see \cite[Theorem~I.4]{MR1211997} and \cite[Lemma~4.2]{MR1247504}) takes a different approach at this point. He shows that, given a map $C \to \modulistack[\semistable]{X}{r}{d}$ from a smooth projective curve $C$ corresponding to a family~$\mathcal{E}$ of semistable vector bundles on~$X$, the degree of the determinantal line bundle~$\mathcal{L}_V$ 
  is nonnegative and is zero if and only if all the fibers~$\{\mathcal{E}_t\}_{t\in C}$ are S-equivalent. This allows one to conclude that the morphism $\modulispace[\semistable]{X}{r}{d} \to \mathbb{P}^N$ has finite fibers, and then the argument can proceed as in the proof of Theorem \ref{theorem:projectivity}. Interestingly, Faltings's argument uses stability of vector bundles on the curve $C$. 

\end{remark}

%
%

\begin{proof}[Proof of Theorem \ref{theorem:projectivity}]
  By Proposition \ref{proposition:semiample} the determinantal line bundle~$\mathcal{L}_V$ associated to the appropriate choice of~$V$ is semiample, so choose some power for which it is basepoint-free and obtain a morphism~$\phi\colon\modulispace[\semistable]{X}{r}{d}\to\mathbb{P}^N$ for some~$N$.

  By Proposition \ref{proposition:separates-points} the morphism is actually quasi-finite. Indeed, for a given polystable bundle $E = E_1^{\oplus r_1} \oplus \cdots \oplus E_s^{\oplus r_s}$ where the $E_1, \ldots, E_s$ are nonisomorphic stable bundles, there are only finitely many other polystable bundles whose stable summands are precisely $E_1, \ldots, E_s$, and for any other polystable bundle $E'$, there exists a section $s_V$ separating $E$ and $E'$.

  Since $\modulispace[\semistable]{X}{r}{d}$ is a proper algebraic space, the morphism $\modulispace[\semistable]{X}{r}{d} \to \mathbb{P}^N$ is proper by \cite[\spref{04NX}]{stacks-project} and hence finite by \cite[\spref{0A4X}]{stacks-project}. But then it is also affine, hence representable, so~$\modulispace[\semistable]{X}{r}{d}$ is a scheme. Finally, pulling back an ample line bundle on~$\mathbb{P}^N$ along the affine morphism produces an ample line bundle on $\modulispace[\semistable]{X}{r}{d}$, by \cite[\spref{0892}]{stacks-project}.
\end{proof}

We are left with proving Propositions \ref{proposition:semiample} and \ref{proposition:separates-points}. Their proofs will be based on a dimension-counting argument inspired by \cite{MR2068965} (see also \cite[Section 5]{MR0696517} for similar methods), where the authors actually prove a more refined result that yields an effective bound on the rank of $V$ in the theorem.
To get started, we will need the following lemma, discussed in \cite[Remark 4.2]{MR1487229}.

\begin{lemma}
  \label{lemma:bounded-family}
  If $\{F_\al\}_{\al \in \Lambda}$ is a bounded family of vector bundles of rank $r$ on $X$, there exists a scheme $S$ of finite type over $k$ of dimension at most $r^2(g-1) + 1$ and a vector bundle $\sF$ on $S \times X$ such that each $F_\al$ appears as the fiber of $\sF$ over some point $s \in S$.
\end{lemma}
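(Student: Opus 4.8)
The plan is to realize every $F_\al$ as a point of an open subscheme of a suitable Quot scheme, and then to cut that scheme down to the required dimension by slicing transversally to the orbits of the change-of-basis group, one automorphism-stratum at a time. The budget $r^2(g-1)+1$ is exactly $\dim\modulistack{X}{r}{d}+1$, where $\dim\modulistack{X}{r}{d}=r^2(g-1)$ by Proposition \ref{proposition:smooth} and Remark \ref{remark:dimension}; the extra $+1$ is the dimension of the generic ($\mathbb{G}_{\mathrm{m}}$-)stabilizer, and making it appear is the whole point.

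First I would set up a Quot presentation. Since a bounded family has only finitely many Hilbert polynomials among its members, I partition $\Lambda$ into finitely many subfamilies of constant Hilbert polynomial and treat each separately, taking a disjoint union at the end; the bound does not depend on the degree, so the final dimension is the maximum over the pieces. After tensoring by $\mathcal{O}_X(nx_0)$ for $n\gg 0$ and untwisting the universal bundle by $q^*\mathcal{O}_X(-nx_0)$ at the very end, Lemma \ref{lemma:globally-generated} lets me assume each $F_\al$ is globally generated with $\HH^1(X,F_\al)=0$ and $\hh^0(X,F_\al)=N$ constant. As in Example \ref{quot}, each $F_\al$ is then a quotient $\mathcal{O}_X^{N}\twoheadrightarrow F_\al$ inducing an isomorphism on $\HH^0$, giving a point of the open subscheme $R\subseteq\Quot^{P}_{\mathcal{O}_X^{N}/X/k}$ where the quotient is locally free with vanishing $\HH^1$ and $\hh^0=N$. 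The group $\GL_N$ acts on $R$, scalars act trivially, two points lie in the same orbit iff the quotients are isomorphic, the stabilizer of a point $q$ is $\Aut(F_q)\hookrightarrow\GL_N$ via $\HH^0$, and $[R/\GL_N]$ is open in $\modulistack{X}{r}{d}$; hence $\dim R=r^2(g-1)+N^2$ and the orbit through $q$ has dimension $N^2-\dim\Hom_X(F_q,F_q)$.

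Second comes the slicing. I stratify $R$ by the locally closed loci $R_\delta=\{q:\dim\Hom_X(F_q,F_q)=\delta\}$ for $\delta\geq 1$. On $R_\delta$ all $\GL_N$-orbits have the same dimension $N^2-\delta$, so each orbit is closed in $R_\delta$ and the quotient $\mathcal{Z}_\delta=[R_\delta/\GL_N]$ is a finite-type substack with $\dim\mathcal{Z}_\delta=\dim R_\delta-N^2$. Because the orbits are equidimensional, a transverse slice $S_\delta\subseteq R_\delta$ of codimension equal to the common orbit dimension $N^2-\delta$ meets every orbit; restricting the universal quotient to $S_\delta$ and taking $\sF=\bigsqcup_\delta \sF|_{S_\delta}$ on $S=\bigsqcup_\delta S_\delta$ then exhibits each $F_\al$ as a fiber. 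By construction
\[
  \dim S_\delta=\dim R_\delta-(N^2-\delta)=\dim\mathcal{Z}_\delta+\delta,
\]
so the whole argument comes down to the dimension estimate for the strata $\mathcal{Z}_\delta$.

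The main obstacle is precisely this estimate: I must show
\[
  \dim\mathcal{Z}_\delta\leq r^2(g-1)-\delta+1,
  \qquad\text{equivalently}\qquad
  \operatorname{codim}_{\modulistack{X}{r}{d}}\{E:\dim\Hom_X(E,E)\geq\delta\}\geq \delta-1 .
\]
A non-scalar endomorphism forces $E$ to carry a proper nonzero invariant subbundle, so the locus with $\delta\geq 2$ is contained in the image of the stack of extensions $0\to E'\to E\to E''\to 0$ with $0<\rk E'=s<r$; a Riemann--Roch count gives that this image has dimension at most $(r^2-s(r-s))(g-1)$, i.e.\ codimension at least $s(r-s)(g-1)\geq(r-1)(g-1)$. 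More generally, stratifying by the indecomposable direct-sum type $E\cong\bigoplus_a E_a^{\oplus n_a}$ and computing with $-\chi$, each isomorphic repetition of a summand and each nonzero $\Hom_X(E_a,E_b)$ simultaneously raises $\dim\Hom_X(E,E)$ and pushes $E$ into a locus whose dimension drops by a corresponding multiple of $(g-1)$. For $g\geq 2$ this trade-off always beats $\delta-1$ with room to spare; carrying out this bookkeeping uniformly in $\delta$ is the genuinely technical step, and I expect it to be where most of the work lies. Granting it, each $\dim S_\delta\leq r^2(g-1)+1$, and since there are finitely many strata and finitely many Hilbert polynomials, the disjoint union $S$ has dimension at most $r^2(g-1)+1$, completing the proof; the remaining points (finiteness of the stratification, existence of a slice meeting every orbit in the equidimensional situation, and the untwisting) are routine.
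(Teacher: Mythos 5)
Your Quot-scheme setup is sound as far as it goes: on the locus where the quotient is locally free with $\HH^1=0$ and $\HH^0$ an isomorphism one indeed has $\dim R = N^2 + r^2(g-1)$, the $\GL_N$-stabilizer of a point is $\Aut(F_q)$, and $[R/\GL_N]$ is open in $\modulistack{X}{r}{d}$. But the proof has a genuine gap exactly where you write ``Granting it'': the estimate
\[
  \dim \mathcal{Z}_\delta \le r^2(g-1) - \delta + 1,
  \qquad\text{equivalently}\qquad
  \operatorname{codim}_{\modulistack{X}{r}{d}}\{E : \dim\Hom_X(E,E)\ge\delta\}\ge\delta-1,
\]
is never proved, and it is not a technical footnote --- it \emph{is} the lemma. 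Everything before it (twisting, the Quot presentation, stratifying by $\dim\Hom_X(F_q,F_q)$, slicing) only translates the statement into this inequality; a reader willing to grant the inequality could just as well grant the lemma. Moreover, the sketch you give for it is incomplete even in outline: stratifying by indecomposable direct-sum type and counting repetitions and nonzero $\Hom_X(E_a,E_b)$'s misses the nilpotent endomorphisms of a single indecomposable summand (e.g.\ an iterated non-split self-extension of a line bundle is indecomposable with $\dim\Hom_X(E,E)=r$), and for unstable bundles one must also control $\Hom$'s between Harder--Narasimhan factors of different slopes. A correct treatment needs filtrations with stable graded pieces, the bound $\dim\Hom_X(E,E)\le\sum_{i,j}\dim\Hom_X(G_i,G_j)$, and Riemann--Roch counts of iterated-extension loci, uniformly in $\delta$ --- which is essentially the whole argument of the sources.

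Two further points. First, a smaller error: your claim that a single transverse slice of codimension $N^2-\delta$ ``meets every orbit'' is unjustified as stated; orbits are closed in $R_\delta$, but $R_\delta$ is only quasi-projective, and a linear section of complementary codimension can a priori meet an orbit closure only along its boundary. The standard repair is to take, through a point of each orbit, a locally closed slice transverse to that orbit; each such slice maps smoothly to $\mathcal{Z}_\delta$ with open image, and quasi-compactness of $\mathcal{Z}_\delta$ lets finitely many of them suffice --- same dimension count, so this step is fixable. Second, for comparison: the paper does not prove this lemma at all but cites \cite[Remark~4.2]{MR1487229}, and the remark it appends indicates the intended route, which differs from yours: parameterize stable bundles by a smooth variety of dimension $r^2(g-1)+1$, and show that any bounded family of non-stable bundles, realized via Harder--Narasimhan and Jordan--H\"older filtrations as iterated extensions, is parameterized in dimension at most $r^2(g-1)$. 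That dichotomy isolates the $+1$ at the stable locus once and for all, whereas your stratification by endomorphism dimension forces the codimension trade-off above across all strata --- a statement that is true for $g\ge2$, but whose proof is the missing heart of your argument.
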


In fact, the stable bundles of rank $r$ and degree $d$ can be parameterized by a smooth variety of dimension $r^2(g-1) + 1$, and any bounded family of non-stable bundles by a variety of dimension at most $r^2(g-1)$.

The following lemma will be the technical heart for the proof of both propositions. We will apply it for~$\epsilon=-1,0,1$, and think of it as a ``fudge factor'' perturbing the equality~$\chi(X,E\otimes V)=0$ for~$\epsilon=0$.

\begin{lemma}
  \label{lem:generic-image-slope}
  Let $E$ be a vector bundle of rank $r$ and degree $d$ on $X$, and let $\eps$ be an integer. Let $m$ be a sufficiently large integer and $V$ a general stable vector bundle with
      \[ \rk(V) = m r_1, \quad \deg(V) = m r_1 (g-1) - m d_1 + \eps. \]
  \begin{enumerate}
    \item\label{enumerate:generic-image-slope-1} There exist no maps $V^\vee \to E$ whose image $F$ has slope $\mu(F) < \mu(E)$.

    \item\label{enumerate:generic-image-slope-2} If moreover $E$ is stable and $\eps \le 0$, there exist no nonzero maps $V^\vee \to E$. 
  \end{enumerate}
\end{lemma}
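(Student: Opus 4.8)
The plan is to reformulate the statement in terms of subsheaves and quotients and then run a dimension count in the irreducible moduli of stable bundles $V$ (nonempty and irreducible by Remark~\ref{remark:existence-stable} and Proposition~\ref{prop:irreducibility}), in the spirit of Esteves--Popa. A nonzero map $\phi\colon V^\vee\to E$ with image $F$ factors as $V^\vee\twoheadrightarrow F\hookrightarrow E$, so $F$ is at once a quotient of $V^\vee$ and a subsheaf of $E$; dualizing the surjection shows that $W\colonequals F^\vee$ is a saturated subbundle of $V$ with locally free quotient $Q\colonequals V/W$. Hence a map $\phi$ as above exists if and only if $V$ contains, as a saturated subbundle, the dual of some subsheaf $F\subseteq E$. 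For part~\ref{enumerate:generic-image-slope-2}, if $E$ is stable then a nonzero $\phi$ has image $F$ with either $F=E$, so that $\phi$ is surjective, or $\mu(F)<\mu(E)$; the latter is precisely what part~\ref{enumerate:generic-image-slope-1} excludes, so it remains to rule out surjections $V^\vee\twoheadrightarrow E$, that is, the case $W=E^\vee$.

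Write $n=\rk V=mr_1$, so that $\mu(E)=d/r$ and $\mu(V)=(g-1)-\mu(E)+\eps/n$. Suppose $W=F^\vee$ has rank $\rho$ and degree $-\delta$, so $\mu(F)=\delta/\rho$ and $\rho\le r$. The condition $\mu(F)<\mu(E)$ bounds $\delta$ above by $\rho\mu(E)$, while $W$ being a proper subbundle of the stable $V$ forces $\mu(W)<\mu(V)$ and hence $\delta>\rho(\mu(E)-(g-1))-\rho\eps/n$. For $m\gg0$ these inequalities confine $(\rho,\delta)$ to a finite set independent of $m$, so I may treat each pair separately. For fixed $(\rho,\delta)$ the subsheaves $F\subseteq E$ form a bounded family (a Quot scheme of $E$) of $m$-independent dimension, and so do the bundles $W=F^\vee$; moreover the quotients $Q=V/W$ have all Harder--Narasimhan slopes between $\mu(V)$ and an $m$-independent constant (being quotients of the stable $V$), so they too form a bounded family, of dimension at most $(n-\rho)^2(g-1)+1$ by Lemma~\ref{lemma:bounded-family}.

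The locus of stable $V$ carrying such a configuration is dominated by the family of extensions $0\to W\to V\to Q\to 0$. Accounting for the sub and for its realizations inside $V$ --- equivalently, computing the dimension of the stack of two-step filtered bundles, which equals $-\chi(W,W)-\chi(Q,Q)-\chi(Q,W)$ --- produces a dimension bound whose growth in $m$ is controlled by $-\chi(Q,W)=\rho\deg(V)+n\delta+(n-\rho)\rho(g-1)$. Comparing with the $n^2(g-1)+1$ dimensions of the moduli of stable bundles of rank $n$ and simplifying, the codimension of this locus works out to $n\rho\,(\mu(E)-\mu(F))+O(1)$, with the $O(1)$ bounded independently of $m$. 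When $\mu(F)<\mu(E)$ this tends to $+\infty$, so for $m$ large all of the finitely many loci are proper and a general stable $V$ avoids them, which is part~\ref{enumerate:generic-image-slope-1}. In the surjective case $F=E$ one has $\rho=r$, $\delta=d$ and $\mu(F)=\mu(E)$, so the leading term vanishes; redoing the count with the now rigid bundle $W=E^\vee$ gives codimension $1-r\eps$, which is $\ge1$ exactly when $\eps\le0$. This is where that hypothesis enters, and it is sharp: for $\eps>0$ one has $\chi(X,V\otimes E)=r\eps>0$, forcing nonzero maps to exist. A general stable $V$ thus avoids both the part-\ref{enumerate:generic-image-slope-1} loci and the surjective locus, yielding part~\ref{enumerate:generic-image-slope-2}.

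The main obstacle is making the dimension bound on the configuration locus rigorous. Two points require care: the boundedness of the family of quotients $Q$, which is what allows Lemma~\ref{lemma:bounded-family} to be applied, and the correct accounting of the extension data. Bounding the latter naively by $\dim\{Q\}+\dim\Ext^1(Q,W)$ introduces an a priori large term $\hom(Q,W)$; it is the cancellation expressed by $-\chi(Q,W)=\dim\Ext^1(Q,W)-\hom(Q,W)$, made transparent by the filtered-stack dimension, that makes the codimension come out as stated. Establishing this estimate cleanly is the technical heart of the argument.
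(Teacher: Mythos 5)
Your proposal is, in substance, the paper's own proof read through duality: where you count subbundles $W=F^\vee\subseteq V$ with quotient $Q$, the paper counts extensions $0\to G\to V^\vee\to F\to 0$, and the two ledgers are term-by-term identical --- your $-\chi(Q,W)$ is the paper's $\dim\Ext_X^1(F,G)$, your bounded family of quotients $Q$ is the paper's bounded family of kernels $G$ (bounded for the same reason: they are quotients, resp.\ subsheaves, of a bounded family with fixed invariants), and your headline numbers (finiteness of the pairs $(\rho,\delta)$ from stability of $V$, codimension $n\rho(\mu(E)-\mu(F))+O(1)$ in case (i), codimension $1-r\eps$ in the surjective case) coincide with the paper's. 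Your remark that $\eps\le0$ is sharp because $\chi(X,V\otimes E)=r\eps$ is a nice addition not in the paper. The only genuine difference is packaging. The paper parameterizes everything by \emph{schemes} --- $\sU_1\times\sU_2$ supplied by Lemma~\ref{lemma:bounded-family}, and projectivized extension spaces $\mathbb{P}\Ext_X^1(F,G)$ --- and there is no ``cancellation'' to engineer: one simply observes that $\Hom_X(F,G)=0$ whenever the extension $V^\vee$ is stable (a nonzero composite $V^\vee\twoheadrightarrow F\to G\hookrightarrow V^\vee$ would be a noninvertible nonzero endomorphism of a stable bundle), so $\dim\Ext_X^1(F,G)=-\chi(X,F^\vee\otimes G)$ exactly on the locus that matters, and $\dim(\mathrm{image})\le\dim(\mathrm{source})$ is automatic for maps of schemes.

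The step you defer --- making the filtered-stack count rigorous --- is where care is genuinely required, because in part (ii) the margin is exactly $1$: you need codimension $\ge 1$ and you claim $1-r\eps$ with $\eps=0$ allowed. Stack dimensions make every $\pm1$ coming from stabilizers and gerbes significant. For instance, if one computes the fibre of your filtered stack over a point of the stack of rank-$mr_1$ bundles as the quotient $[S_V/\Aut(V)]$, where $S_V$ denotes the scheme of subsheaves $W\subseteq V$ of the given type --- i.e.\ takes the fibre over the residual gerbe rather than over an honest point $\Spec k$ --- then the surjective-case bound degrades from $1-r\eps$ to $-r\eps$, and the argument fails at $\eps=0$; the correct convention (the fibre over a point $\Spec k$ of the stack is the scheme $S_V$ itself) recovers $1-r\eps$. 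One must also remember the $+1$ relating the stack dimension $(mr_1)^2(g-1)$ of the stable locus to the moduli-space dimension $(mr_1)^2(g-1)+1$ of Theorem~\ref{theorem:good-moduli-space}. This bookkeeping is precisely what the paper's scheme-level parameterization renders moot, which is what that route buys you; your plan is correct, but to complete it you should either carry out the stack count with this care or fall back on the schemes-plus-$\mathbb{P}\Ext^1$ version.
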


\begin{proof}
  We can assume that $\rk(V) > r$. Note that $V^\vee$ satisfies
  \[ \rk(V^\vee) = m r_1, \quad \deg(V^\vee) = m r_1 (1-g) + m d_1 - \eps. \]
  If $V^\vee \to E$ is a map with image $F$, then, since $V^\vee$ is stable, we must have
  \[ \mu(F) > \mu(V^\vee) = \mu(E) - g + 1 - \frac{\eps}{m r_1} \ge \mu(E) - g + 1 - \eps. \]
  Thus, there are only finitely many options for the slope of the image $F$ such that $\mu(F) \le \mu(E)$. Since moreover we must have $1 \le \rk(F) \le r$, there are only finitely many options for the rank and degree of $F$.

  To prove \ref{enumerate:generic-image-slope-1}, fix integers $1 \le k \le r$ and $l$ with $l/k < \mu(E)$. We will find an upper bound for the dimension of a variety that parameterizes all stable bundles $V$ such that there exists a map $V^\vee \to E$ whose image $F$ has rank $k$ and degree $l$. Any such $V^\vee$ fits into an exact sequence
  \begin{equation}\label{Vdualses}
    \begin{tikzcd}
      0 \arrow[r] & G \arrow[r] & V^\vee \arrow[r] \arrow[rd] & F \arrow[r] \arrow[d, hook] & 0 \\
      & & & E
    \end{tikzcd}
  \end{equation}
  where $G$ satisfies
  \[ \rk(G) = m r_1 - k, \quad \deg(G) = m r_1 (1-g) + m d_1 - \eps - l. \]
  The possible sheaves $F$ that can appear in \eqref{Vdualses} form a bounded family since they are all subsheaves of $E$ of fixed rank and degree. By Lemma \ref{lemma:bounded-family} they are parameterized by a scheme $\sU_1$ of finite type over $k$ with
  \[ \dim \sU_1 \le k^2(g-1) + 1. \]
  Similarly, since the family of stable bundles of fixed rank and degree is bounded, the sheaves $G$ that can appear in \eqref{Vdualses} form a bounded family and so by Lemma \ref{lemma:bounded-family} are parameterized by a scheme $\sU_2$ with
  \[ \dim \sU_2 \le (m r_1 - k)^2 (g-1) + 1. \]
  Note that since $V^\vee$ in \eqref{Vdualses} is stable, we must have $\Hom_X(F, G) = 0$, because for any nonzero map $F \to G$ the composition
  \[ V^\vee \twoheadrightarrow F \to G \hookrightarrow V^\vee \]
  would give a nonzero endomorphism of $V^\vee$ that is not an isomorphism. Now, for a fixed $F$ and $G$, the possible $V^\vee$ appearing in \eqref{Vdualses} are parameterized by an open subset of $\mathbb{P}\mathrm{Ext}_X^1(F, G)$. By the Riemann--Roch theorem we have
  \begin{align*}
      &\dim \Ext_X^1(F, G) \\
      &\quad = - \chi(X, F^\vee \otimes G) = -\deg(F^\vee \otimes G) - \rk(F^\vee \otimes G) (1-g) \\
      &\quad = \deg(F)\rk(G) - \rk(F) \deg(G) + \rk(F) \rk(G) (g - 1) \\
      &\quad = l(m r_1 - k) - k(m r_1 (1-g) + m d_1 - \eps - l) + k(m r_1 - k)(g - 1) \\
      &\quad = (l r_1 - k d_1) m + 2 k r_1 (g-1) m - k^2 (g-1) + k\eps.
  \end{align*}
  Note that by our assumption on $k$ and $l$, we have
  \[ l r_1 - k d_1 = k r_1 \left(\frac{l}{k} - \frac{d_1}{r_1}\right) = k r_1 (\mu(F) - \mu(E)) < 0. \]
  Thus, the possible $V^\vee$ that could appear are parameterized by an open subset of a projective bundle $\sP$ over an open subset $\sU \subs \sU_1 \times \sU_2$ of dimension
  \begin{align*}
      \dim \sP & \le \dim \sU_1 + \dim \sU_2 + \dim \Ext_X^1(F, G) - 1 \\
      & \le k^2 (g - 1) + 1 + (m r_1 - k)^2 (g - 1) + 1 \\
      & \quad + k r_1 (\mu(F) - \mu(E)) m + 2 k r_1 (1-g) m - k^2 (g-1) - k\eps - 1 \\
      & = (m r_1)^2 (g - 1) + k r_1 (\mu(F) - \mu(E)) m + k^2 (g - 1) + k\eps + 1.
  \end{align*}
  Since the coefficient of $m$ in the last expression is negative, we see that for large enough $m$, we have $\dim \sP < (m r_1)^2 (g - 1) + 1$, where by Theorem \ref{theorem:good-moduli-space} the right-hand side is the dimension of the moduli space of stable bundles of rank $m r_1$ and fixed degree. Thus, a general stable bundle $V$ cannot appear in an extension of the form \eqref{Vdualses}. This proves \ref{enumerate:generic-image-slope-1}.

  To prove \ref{enumerate:generic-image-slope-2}, assume that $E$ is stable and $\eps \le 0$. By \ref{enumerate:generic-image-slope-1}, if $V$ is a general stable vector bundle of the given rank and degree and $m$ is sufficiently large, there are no maps $V^\vee \to E$ whose image has slope less than $\mu(E)$. Thus, since $E$ is stable, any nonzero map must be surjective, and so we are led to estimate the dimension of a variety parameterizing all stable bundles $V$ such that there exists a short exact sequence
  \[ 0 \to G \to V^\vee \to E \to 0. \]
  Such $V^\vee$ are parameterized by a projective bundle $\sP$ over a scheme $\sU_2$, where $\sU_2$ parameterizes bundles $G$ with
  \[ \rk(G) = m r_1 - r, \quad \deg(G) = m r_1 (1-g) + m d_1 - \eps - d \]
  and again $\Hom_X(E, G) = 0$ since $V^\vee$ is stable. The fiber of $\sP$ over $[G] \in \sU_2$ is $\mathbb{P}\mathrm{Ext}_X^1(E, G)$, and the dimension calculations from above apply, so we have
  \begin{align*}
      \dim \sP & \le \dim \sU_2 + \dim \Ext_X^1(E, G) - 1 \\
      & = (m r_1 - r)^2 (g - 1) + 1 + 2 r r_1 (g-1) m - r^2 (g-1) + r\eps - 1 \\
      & = (m r_1)^2 (g - 1) + r\eps.
  \end{align*}
  Since $\eps \le 0$, we have $\dim \sP < (m r_1)^2 (g - 1) + 1$, where the left-hand side is the dimension of the moduli space of stable bundles of rank $m r_1$ by Theorem \ref{theorem:good-moduli-space}.
\end{proof}

\begin{proof}[Proof of Proposition \ref{proposition:semiample}]
  Applying Lemma \ref{lem:generic-image-slope} with $\epsilon = 0$ to a stable vector bundle $E$ produces an open subset in the relevant moduli space such that any $V$ in the open subset satisfies the first claim of the proposition.

  Now let~$E$ be a polystable vector bundle corresponding to a point in~$\modulispace[\semistable]{X}{r}{L}$. 
  Taking the intersection of the open subsets for all the stable summands of $E$ produces a nonempty open subset where one can find a vector bundle~$V$ such that~$\HH^i(X,E\otimes V)=0$ for~$i=0,1$. Hence the characterization from Proposition \ref{proposition:determinantal-line-bundle-properties} shows that the associated section~$s_V$ does not vanish at~$[E]\in\modulispace[\semistable]{X}{r}{L}$. 

\end{proof}

\begin{lemma}
  \label{lemma:general-stable-surjective}
  Let $E_0, E_1, \ldots, E_n$ be stable vector bundles of slope $\mu = r/d$. Denote the rank of $E_i$ by $k_i$. For $m$ sufficiently large, a general stable vector bundle $V$ with
  \[
    \rk(V) = m r_1 \quad \text{and} \quad \deg(V) = m r_1 (g - 1) - m d_1 + 1
  \]
  satisfies the following:
  \begin{enumerate}[label={\upshape(\roman*)}]
    \item\label{enumerate:general-stable-surjective-1} $\dim \Hom_X(V^\vee, E_i) = k_i$, or equivalently $\Ext_X^1(V^\vee,E_i)=0$;
    \item\label{enumerate:general-stable-surjective-2} for $0 \le i \le n$, every nonzero map $V^\vee \to E_i$ is surjective;
    \item\label{enumerate:general-stable-surjective-3} for $1 \le i \le n$, for all nonzero maps $V^\vee \to E_0$ and $V^\vee \to E_i$, the induced map $V^\vee \to E_0 \oplus E_i$ is surjective.
  \end{enumerate}
\end{lemma}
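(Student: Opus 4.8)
The plan is to deduce all three statements from the dimension‑counting technology of Lemma~\ref{lem:generic-image-slope} (applied with $\eps=1$), combined with Serre duality. Since $n$ is finite and the moduli space of stable bundles of fixed rank and degree is irreducible, each assertion will cut out a dense open subset of the moduli of stable $V$, and we intersect the finitely many of them. First I would record the numerics: a Riemann--Roch computation gives $\chi(X,V\otimes E_i)=k_i$ for our choice $\eps=1$. Since $\Hom_X(V^\vee,E_i)=\HH^0(X,V\otimes E_i)$ and $\Ext^1_X(V^\vee,E_i)=\HH^1(X,V\otimes E_i)$, the relation $\dim\Hom_X(V^\vee,E_i)-\dim\Ext^1_X(V^\vee,E_i)=\chi(X,V\otimes E_i)=k_i$ shows that part~\ref{enumerate:general-stable-surjective-1} is \emph{equivalent} to the vanishing $\Ext^1_X(V^\vee,E_i)=0$. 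Part~\ref{enumerate:general-stable-surjective-2} is then immediate in the style of Proposition~\ref{proposition:semiample}: since $E_i$ is stable, any nonzero non-surjective map $V^\vee\to E_i$ has image a proper subsheaf $F\subsetneq E_i$ with $\mu(F)<\mu(E_i)$, and Lemma~\ref{lem:generic-image-slope}\ref{enumerate:generic-image-slope-1} (with $\eps=1$) rules out such maps for general $V$; hence every nonzero map $V^\vee\to E_i$ is surjective.

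For part~\ref{enumerate:general-stable-surjective-1}, Serre duality enters. Writing $U\colonequals V^\vee\otimes\omega_X$, the twist-and-dualize operation $V\mapsto U$ is an isomorphism between the relevant moduli spaces of stable bundles, so $U$ runs over general stable bundles as $V$ does. Serre duality and dualization of vector-bundle maps give $\Ext^1_X(V^\vee,E_i)\cong\Hom_X(E_i,U)^\vee\cong\Hom_X(U^\vee,E_i^\vee)^\vee$. Now $E_i^\vee$ is stable, and a direct degree computation shows that $U$ has $\rk(U)=mr_1$ and $\deg(U)=mr_1(g-1)+md_1-1$, which is precisely the rank and degree for which Lemma~\ref{lem:generic-image-slope}\ref{enumerate:generic-image-slope-2} applies to the stable target $E_i^\vee$, with fudge factor $\eps=-1\le 0$. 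That part of the lemma yields $\Hom_X(U^\vee,E_i^\vee)=0$, hence $\Ext^1_X(V^\vee,E_i)=0$ and $\dim\Hom_X(V^\vee,E_i)=k_i$. The only point requiring care here is the bookkeeping that makes the dualized problem land in the $\eps\le 0$ case, where the sign is essential.

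For part~\ref{enumerate:general-stable-surjective-3}, fix a general $V$ (in the intersection of the dense opens above) together with nonzero maps $f\colon V^\vee\to E_0$ and $f_i\colon V^\vee\to E_i$, and let $F$ be the image of the combined map $(f,f_i)\colon V^\vee\to E_0\oplus E_i$. The target $E_0\oplus E_i$ is semistable of slope $\mu$, and—crucially—the proof of Lemma~\ref{lem:generic-image-slope}\ref{enumerate:generic-image-slope-1} uses no stability of its target, so it applies verbatim to $E_0\oplus E_i$ and forces $\mu(F)\ge\mu$ for \emph{every} such map. As $F\subseteq E_0\oplus E_i$ and the target is semistable of slope $\mu$, this gives $\mu(F)=\mu$, whence $F$ is a saturated subbundle; by semisimplicity of the category of semistable bundles of slope $\mu$ it is a direct summand of $E_0\oplus E_i$. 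By part~\ref{enumerate:general-stable-surjective-2} the projections of $F$ onto $E_0$ and onto $E_i$ are surjective, and—provided $E_0\not\cong E_i$, which holds in the intended application to Proposition~\ref{proposition:separates-points}—the only summand surjecting onto both factors is $E_0\oplus E_i$ itself. Thus $(f,f_i)$ is surjective.

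I expect part~\ref{enumerate:general-stable-surjective-3} to be the main obstacle. Two things must be handled carefully there: first, that the slope bound from Lemma~\ref{lem:generic-image-slope}\ref{enumerate:generic-image-slope-1} is legitimately applied to the \emph{decomposable}, merely semistable bundle $E_0\oplus E_i$, and that the genericity of $V$ is uniform enough that the bound holds simultaneously for all pairs $(f,f_i)$ rather than a single chosen pair; and second, the final identification of $F$, which breaks down precisely for a ``diagonal'' subbundle when $E_0\cong E_i$—so the hypothesis $E_0\not\cong E_i$ is genuinely needed and should be made explicit. The remaining genericity bookkeeping is routine, since only finitely many dense open conditions (indexed by $i$ and by the pairs $(0,i)$) are intersected inside the irreducible moduli space.
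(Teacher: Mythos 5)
Your proposal is correct and follows essentially the same route as the paper's proof: Riemann--Roch plus Serre duality reduce part (i) to Lemma~\ref{lem:generic-image-slope}\ref{enumerate:generic-image-slope-2} with $\eps=-1$ applied to the stable bundles $E_i^\vee$ and the general stable bundle $V^\vee\otimes\omega_X$, part (ii) is Lemma~\ref{lem:generic-image-slope}\ref{enumerate:generic-image-slope-1} with $\eps=1$ together with stability of $E_i$, and part (iii) applies Lemma~\ref{lem:generic-image-slope}\ref{enumerate:generic-image-slope-1} to $E_0\oplus E_i$ and identifies the image among the slope-$\mu$ subsheaves. Your observation that $E_0\not\cong E_i$ is genuinely needed is consistent with the paper, whose own proof invokes this nonisomorphy (implicit in the statement, carried over from the intended application in Proposition~\ref{proposition:separates-points}) at exactly the same step.
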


\begin{proof}
  Note first that $\mu = d_1/r_1 = \deg(E_i)/k_i$. We have
  \[
    \dim \Hom_X(V^\vee, E_i) = \hh^0(V \otimes E_i) = \chi(X, V \otimes E_i) + \hh^1(X, V \otimes E_i),
  \]
  and by the Riemann--Roch theorem,
  \begin{align*}
    \chi(X, V \otimes E_i) & = \deg(V \otimes E_i) + \rk(V \otimes E_i) (1 - g) \\
    & = \deg(V)\rk(E_i) + \rk(V) \deg(E_i) + \rk(V) \rk(E_i) (1 - g) \\
    & = (m r_1 (g - 1) - m d_1 + 1) k_i + m r_1 k_i \mu + m r_1 k_i (1 - g) \\
    & = k_i.
  \end{align*}
  To obtain \ref{enumerate:general-stable-surjective-1} we must show that for a general stable $V$ we have $\HH^1(X, V \otimes E_i) = 0$ for $0\leq i\leq n$. By Serre duality, we have
  \[ \hh^1(X, V \otimes E_i) = \hh^0(X, V^\vee \otimes \om_X \otimes  E_i^\vee) = \dim \Hom_X(V \otimes \om_X^\vee, E_i^\vee). \]
  Now $V^\vee \otimes \om_X$ is a general stable bundle of rank $m r_1$ and degree
  \begin{align*}
    \deg(V^\vee \otimes \omega_X) & = -\deg(V) + \rk(V) \deg(\omega_X) \\
    & = m r_1 (1 - g) + m d_1 - 1 + m r_1(2 g - 2) \\
    & = m r_1(g - 1) + m d_1 - 1,
  \end{align*}
  and $E_i^\vee$ is a stable bundle of slope $-d_1/r_1$, so we may apply Lemma \ref{lem:generic-image-slope}(ii) with $\epsilon = -1$ to each $E_i^\vee$ to conclude that, for sufficiently large $m$ and general stable $V$, we have $\Hom_X(V \otimes \om_X^\vee, E_i^\vee) = 0$ for each $i$. This gives \ref{enumerate:general-stable-surjective-1}.

  By applying Lemma \ref{lem:generic-image-slope}\ref{enumerate:generic-image-slope-1} with $\epsilon = 1$ to each $E_i$ and by increasing $m$ if necessary, we may assume that for general stable $V$ the image of every nonzero map $V^\vee \to E_i$ has slope equal to $\mu$, and, since $E_i$ is stable, such a map must be surjective. This gives \ref{enumerate:general-stable-surjective-2}.

  Finally, by applying Lemma \ref{lem:generic-image-slope}(i) to each $E_0 \oplus E_i$ for $1 \le i \le n$ and increasing $m$ if necessary, we may assume that for general stable $V$, every nonzero map $V^\vee \to E_0 \oplus E_i$ has image with slope equal to $\mu$. For such a $V$ and two nonzero maps $V^\vee \to E_0$ and $V^\vee \to E_i$, the image $F \subs E_0 \oplus E_i$ of the induced map $V^\vee \to E_0 \oplus E_i$ must thus have slope $\mu(F) = \mu$, and moreover $F$ surjects onto both summands. But the only nonzero subsheaves of $E_0 \oplus E_i$ of slope $\mu$ are $E_0$, $E_i$, and $E_0 \oplus E_i$ and, since $E_0$ and $E_i$ are by assumption nonisomorphic, we must have $F = E_0 \oplus E_i$. This gives \ref{enumerate:general-stable-surjective-3}.
\end{proof}

Before we give the proof of Proposition \ref{proposition:separates-points} we introduce a method for constructing new vector bundles out of old ones.
\begin{definition}
  Let~$E$ be a vector bundle on~$X$ and let~$x\in X(k)$ be a closed point. Denote by~$\mathcal{O}_x$ the skyscraper sheaf at~$x$ and $E(x)$ the fiber of $E$ at~$x$. Let~$\alpha\in\Hom_X(E,\mathcal{O}_x)=E(x)^\vee$ be a nonzero morphism. The \emph{elementary transformation} of~$E$ at~$x$ associated to~$\alpha$ is the vector bundle~$E'$ defined as the kernel of~$\alpha$, so that we have the short exact sequence
  \begin{equation}
    0\to E'\to E\overset{\alpha}{\to}\mathcal{O}_x\to 0.
  \end{equation}
\end{definition}
The kernel of~$\alpha$ does not change under rescaling, so elementary transformations of~$E$ at~$x$ correspond to closed points of~$\mathbb{P}E(x)^\vee$.


If~$\phi\colon F\to E$ is a morphism, then~$\operatorname{im}\phi(x)$ is a subspace of~$E(x)$, while~$\ker \alpha(x)$ is a hyperplane in~$E(x)$, which we will denote by~$H$. If~$\operatorname{im}\phi(x)\subseteq H$ then the composition~$\smash{F\xrightarrow{\phi} E\xrightarrow{\alpha}\mathcal{O}_x}$ vanishes and so~$F\to E$ factors through~$E'$. Conversely, if~$\operatorname{im}\phi(x)$ is not contained in~$H$ then~$F\to E$ does not factor through~$E'$.


\begin{proof}[Proof of Proposition \ref{proposition:separates-points}]
  Let $E_1, \ldots, E_n$ be the stable summands of $E'$, and let $E_0$ be a stable summand of $E$ not isomorphic to any of the $E_i$. As before, let $k_i$ denote the rank of $E_i$. It suffices to find a vector bundle $F$ such that
  \[ \HH^0(X, F \otimes E_0) \neq 0 \quad \text{but} \quad \HH^0(X, F \otimes E_i) = 0 \; \text{for} \; i = 1,\ldots, n. \]
  Let $V$ be a stable vector bundle satisfying the conditions of Lemma \ref{lemma:general-stable-surjective}, and fix a nonzero map $\phi\colon E_0^\vee \to V$. The goal is to find a subsheaf $F \subset V$ of degree $\deg(F) = \deg(V) - 1$ such that

  \begin{itemize}
    \item $\phi$ factors through $F$, but
    \item for $i = 1, \ldots, n$, no nonzero map $E_i^\vee \to V$ factors through $F$.
  \end{itemize}

  Let $Q$ denote the cokernel of $\phi$. It is locally free since by Lemma \ref{lemma:general-stable-surjective}(ii) $\phi$ is the dual of a surjective map $V^\vee \to E_0$. Note that, for any nonzero map $\psi_i\colon E_i^\vee \to V$, the induced map $E_0^\vee \oplus E_i^\vee \to V$ is dual to a surjection by Lemma \ref{lemma:general-stable-surjective}(iii) and hence injective. The inclusions
  \[ E_0^\vee \hookrightarrow E_0^\vee \oplus E_i^\vee \overset{(\phi,\psi_i)}{\hookrightarrow} V \]
  induce a short exact sequence
  \[ 0 \to E_i^\vee \xrightarrow{\overline{\psi}_i} Q \to V/E_0^\vee \oplus E_i^\vee \to 0, \]
  and so the composition $\overline{\psi}_i\colon E_i^\vee \to V \to Q$ is injective with locally free cokernel, from which it follows that the restriction of $\overline{\psi}_i$ to a fiber is also injective.

  Let $x \in X$ be a closed point, let $Q(x)$ and $V(x)$ denote the fibers of $Q$ and $V$ at $x$ respectively, and let
  $\mathrm{Grass}(k_i, Q(x))$ denote the Grassmannian of $k_i$-dimensional subspaces of the vector space $Q(x)$. Let $M_i \subs \mathrm{Grass}(k_i, Q(x))$ denote the image of the morphism
  \[ \mathbb{P}\mathrm{Hom}_X(E_i^\vee, V) \to \mathrm{Grass}(k_i, Q(x)) \]
  that sends a nonzero map $\psi_i$ to its image under the composition
  \[ E_i^\vee \xrightarrow{\overline{\psi}_i} Q \to Q(x). \]
  This morphism is well defined, because in the previous paragraph we established that~$\overline{\psi}_i(x)$ is injective.
  We have $\dim M_i \le k_i - 1$ by Lemma~\ref{lemma:general-stable-surjective}\ref{enumerate:general-stable-surjective-1}.

  For a hyperplane $H \subs Q(x)$, let $Z_{H,i} \subs \mathrm{Grass}(k_i, Q(x))$ denote the Schubert variety parameterizing subspaces contained in $H$. The codimension of $Z_{H,i}$ in $\mathrm{Grass}(k_i, Q(x))$ is $k_i$, and so the expected dimension of the intersection $M_i \cap Z_{H,i}$ is $-1$. Thus, by the Bertini--Kleiman theorem \cite[Corollary 4(i)]{MR360616} we obtain for the general hyperplane $H$ that $M_i \cap Z_{H,i}$ is empty for $i = 1, \ldots, n$. Let us fix one such hyperplane $H$.

  Let $F$ denote the elementary transformation of~$V$ at~$x$ defined by the subspace~$H$, i.e.,~it is the kernel of the morphism $V\to\mathcal{O}_x$ determined by the composition
  \[ V \to V(x) \to Q(x) \to Q(x)/H. \]
  By construction $\phi\colon E_0^\vee \to V$ factors through $F$, and so
  \[ \HH^0(X, E_0 \otimes F) = \Hom_X(E_0^\vee, F) \neq 0. \]
  On the other hand, no nonzero map $E_i^\vee \to V$ factors through $F$, as the composition
  \[ E_i^\vee(x) \xrightarrow{\psi_i(x)}  V(x) \to Q(x)/H \]
  is nonzero by the choice of $H$. Thus,
  \[ \HH^0(X, E_i \otimes F) = \Hom_X(E_i^\vee, F) = 0 \quad \mathrm{for} \quad i = 1, \ldots, n. \]
  Since $\rk(F) = \rk(V)$ and $\deg(F) = \deg(V) - 1$, this completes the proof.
\end{proof}

\paragraph{Acknowledgements}
We thank Hannah Larson and Shizhang Li for being part of our group during SPONGE, and the many interesting conversations during and after the workshop. We are grateful for the referee's comments.
PB and TT would also like to thank Chiara Damiolini, Hans Franzen, Vicky Hoskins, and Svetlana Makarova for interesting discussions on a GIT-free proof of the (quasi-)projectivity of moduli of semistable quiver representations.
JA was partially supported by NSF grants DMS-1801976 and DMS-2100088.
DB was supported by NSF Postdoctoral Research Fellowship DMS-1902875.


\bibliographystyle{plainurl}
\urlstyle{tt}
\renewcommand\path[1]{\texttt{#1}}
{\small\bibliography{alper}}

{\small
\setlength\parindent{0cm}
\setlength\parskip{3pt}

Department of Mathematics, University of Washington, Box 354350, C-138 Padelford, Seattle, WA~98195-4350, United States

Department of Mathematics, University of Luxembourg, 6, Avenue de la Fonte, L-4364 Esch-sur-Alzette, Luxembourg

Department of Mathematics, University of California, Berkeley, 970 Evans Hall, Berkeley, CA~94720-3840, United States

Department of Mathematics, University of Michigan, 530~Church Street, Ann Arbor, MI~48109, United States

Matematiska institutionen, Stockholm Universitet, Kr\"aftriket 5A, 114 19 Stockholm, Sweden

\texttt{alper@uw.edu},
\texttt{pieter.belmans@uni.lu},
\texttt{braggdan@berkeley.edu},
\texttt{liangji@umich.edu},
\texttt{tuomas.tajakka@math.su.se}
}

\end{document}